\documentclass[11pt]{article}
\usepackage{geometry}
\geometry{verbose,tmargin=1in,bmargin=1in,lmargin=1in,rmargin=1in}

\usepackage{sty}
\usepackage{am}

\date{}

\title{Equivalence of Approximate Message Passing and\\ Low-Degree Polynomials in Rank-One Matrix
Estimation}

\author{
Andrea Montanari\thanks{Department of Electrical Engineering
    and Department of Statistics, Stanford University; School of Mathematics,
     Institute for Advanced Studies, Princeton} \and 
   Alexander S.\ Wein\thanks{Department of Mathematics, University of California, Davis}}
    
\begin{document}

\maketitle

\begin{abstract}
We consider the problem of estimating an unknown parameter vector 
$\btheta\in\reals^n$, given noisy observations $\bY = \btheta\btheta^{\sT}/\sqrt{n}+\bZ$
of the rank-one matrix $\btheta\btheta^{\sT}$, where $\bZ$ has independent Gaussian entries.
When information is available about the distribution of the entries of $\btheta$, 
spectral methods are known to be strictly sub-optimal. Past work characterized the asymptotics of
the accuracy achieved by the optimal estimator. However, no
polynomial-time estimator is known that achieves this accuracy.

It has been conjectured that this statistical-computation gap is fundamental, and
moreover that the optimal accuracy achievable by polynomial-time estimators coincides with 
the accuracy achieved by certain approximate message passing (AMP) algorithms.
We provide evidence towards this conjecture by proving that no estimator in
the (broader) class of constant-degree polynomials can surpass AMP.
\end{abstract}

\section{Introduction}
\label{sec:Introduction}

Statistical-computational gaps are a ubiquitous phenomenon in high-dimensional statistics.
Consider estimation of a high-dimensional parameter vector $\btheta= (\theta_1,\theta_2,\dots,\theta_n)$
from noisy observations $\bY\sim {\rm P}_{\btheta}$. In many models, we can characterize
the optimal accuracy achieved by arbitrary estimators. However, when we analyze classes
of estimators that can be implemented via polynomial-time algorithms, a significantly
smaller accuracy is obtained. A short list of examples
includes sparse regression~\cite{celentano2022fundamental,gamarnik2022sparse}, 
sparse principal component analysis~\cite{johnstone2009consistency,amini2009high,krauthgamer2015sdp,BR-reduction},
graph clustering and community detection~\cite{decelle2011asymptotic,barak2019nearly}, tensor principal component analysis~\cite{richard2014statistical,hopkins2015tensor}, and tensor decomposition~\cite{MSS-tensor,W-tensor}.

This state of affairs has led to the conjecture that these gaps are fundamental in
nature: there simply exists no polynomial-time algorithm achieving the statistically
optimal accuracy. Proving such a conjecture is extremely challenging since 
standard complexity theoretic assumptions (e.g.\ P$\neq$NP) are ill-suited to
establish complexity lower bounds in \emph{average-case} settings where the input to the algorithm is random. A possible approach to overcome this challenge is
to establish `average-case' reductions between statistical estimation problems.
We refer to \cite{brennan2018reducibility,brennan2020reducibility} for pointers
to this rich line of work.

A second approach is to prove reductions \emph{between classes of polynomial-time algorithms},
thus trimming the space of possible algorithmic choices. This paper
contributes to this line of work by establishing---in a specific context---that
approximate message passing (AMP) algorithms and low-degree (Low-Deg) polynomial
estimators achieve asymptotically the same accuracy. 

Examples of reductions between algorithm classes in statistical estimation include 
\cite{hopkins2017power,brennan2020statistical,celentano2020estimation,banks2021local,celentano2022fundamental}. 
The motivation for studying the relation between AMP
and Low-Deg comes from the distinctive position occupied by these two classes in the theoretical 
landscape.
AMP are iterative algorithms motivated by ideas in information theory 
(iterative decoding~\cite{gallager1962low,RiU08}) 
and statistical physics (cavity method and TAP equations~\cite{thouless1977solution,SpinGlass}). 
Their structure is
relatively constrained: they operate by alternating a matrix-vector multiplication
(with a matrix constructed from the data $\bY$), and a non-linear operation on the same vectors
(typically independent of the data matrix).
This structure has opened the way to sharp characterization
of their behavior in the high-dimensional limit, known as `state evolution'~\cite{DMM09,BM-MPCS-2011,bolthausen2014iterative}.

 Low-Deg was originally motivated by a connection to Sum-of-Squares (SoS)
semidefinite programming relaxations and captures a different (algebraic) notion
of complexity \cite{HS-bayesian,hopkins2017power,schramm2022computational}. In Low-Deg procedures, each unknown parameter
$\theta_i$ is estimated via a fixed polynomial in the entries of the data matrix $\bY$, of constant or 
moderately growing degree. As such, these estimators are relatively unconstrained, and indeed capture 
(via polynomial approximation) a broad variety of natural approaches. 
Developing a sharp analysis of such a broad class of estimators can be very challenging.

In the symmetric rank-one estimation problem, we observe a noisy version $\bY$ of the
rank-one matrix $\btheta\btheta^{\sT}$, with $\btheta\in\reals^n$ an unknown vector:
\begin{align}
\bY = \frac{1}{\sqrt{n}}\btheta\btheta^{\sT}+\bZ\, .\label{eq:ModelDef}
\end{align}
Here $\bZ$ is a random matrix independent of $\btheta$, drawn from the Gaussian Orthogonal 
Ensemble $\bZ\sim \GOE(n)$ which we define here by $\bZ=\bZ^{\sT}$ and $(Z_{ij})_{i\le j}$ independent with $Z_{ii}\sim \normal(0,2)$ and $Z_{ij}\sim \normal(0,1)$ for $i<j$.
Given a single observation of the matrix $\bY$, we would like to estimate 
$\btheta$.

Because of its simplicity, the rank-one estimation problem has been a useful
sandbox to develop new mathematical ideas in high-dimensional statistics. A significant
amount of work has been devoted to the analysis of spectral estimators, which typically
take the form $\hbtheta(\bY) =c_n\, \bv_1(\bY)$ where $\bv_1(\bY)$ is the top eigenvector of $\bY$
and $c_n$ is a scaling factor~\cite{hoyle2004principal,baik2005phase,baik2006eigenvalues,benaych2011eigenvalues}. 
However, spectral methods are known to 
be suboptimal if
 additional information is available about the entries $\theta_i$ of $\theta$. 
 In this paper, we model this
 information by assuming $(\theta_i)_{1\le i\le n}\sim_{iid} \pi_{\Theta}$, for $\pi_{\Theta}$
 a probability distribution on $\reals$ (which does not depend on $n$). The resulting Bayes error coincides (up to terms negligible as $n\to\infty$)
 with the minimax optimal error in the class of vectors with empirical distribution 
 converging to $\pi_{\Theta}$. Hence this model captures the minimax behavior in a well-defined class of signal parameters.

In the high-dimensional limit $n\to\infty$ (with $\pi_\Theta$ fixed), the Bayes optimal accuracy for estimating $\btheta$ under
model \eqref{eq:ModelDef}  (and the above assumptions on $\btheta$) is known to converge to a
 well-defined limit that was characterized in a sequence of 
 papers, see e.g.~\cite{lelarge2019fundamental,barbier2019adaptive,chen2022hamilton}.
The asymptotic accuracy of the optimal AMP algorithm (called Bayes AMP) is also known~\cite{montanari2021estimation}. 

It is useful to pause and recall these results in some detail.
Define the  function $\Psi: \reals_{\ge 0}\times \reals \times \cuP(\reals)\to \reals$  (here
and below $\cuP(\reals)$ denotes the set of probability distributions over $\reals$) 
by letting
\begin{align}
	 \Psi(q;b,\pi_{\Theta}) &:= \frac{1}{4} q^2-
	 \frac{1}{2}\big(\E[\Theta^2]+b\big) q+\Info(q;\pi_{\Theta}) \, ,\label{eq:PsiDef}\\
	 \Info(q;\pi_{\Theta}) &:= \E\log \frac{\de p_{\Yeff|\Theta}}{\de p_{\Yeff}}\,,
	  \;\;\;\;\;
	  \Yeff = \sqrt{q}\Theta +G \, ,\;\;\; (\Theta,G)\sim\pi_{\Theta}\otimes \cN(0,1)\, .	 
\end{align}
 Note that  $\Info(q;\pi_{\Theta})$ can be interpreted as the mutual information between a scalar
 random variable $\Theta\sim \pi_{\Theta}$ and the scalar noisy observation $\Yeff$.
 It can be expressed as a one-dimensional integral with respect to $\pi_{\Theta}$:
 \begin{align}
 \Info(q;\pi_{\Theta}) &:=  -\E\log \Big\{\int e^{-(\Yeff-\sqrt{q}\theta)^2/2}
 \,\pi_{\Theta}(\de\theta)\Big\}-\frac{1}{2} \, .
 \end{align}
The next statement adapts results from~\cite{lelarge2019fundamental} 
 concerning the behavior of the Bayes optimal error (see Appendix \ref{app:IT-result}, which details the derivation from 
 \cite{lelarge2019fundamental}).
 A formal definition of Bayes AMP will be given in the next section,
 alongside a formal definition of Low-Deg algorithms. 
 \begin{theorem}\label{thm:Earlier}
Assume $\pi_{\Theta}$ is independent of $n$, has non-vanishing first moment $\E[\Theta]\neq 0$, and has finite moments of all orders. Define
 \begin{align}
 q_{\sBayes}(\pi_{\Theta}) &:= \argmin_{q\ge 0}\, \Psi(q;0,\pi_{\Theta})\, ,\\
 q_{\sAMP}(\pi_{\Theta}) & :=  \inf\Big\{q\ge 0 \;:\; \Psi'(q;0,\pi_{\Theta}) =0 ,\; 
 \Psi''(q;0,\pi_{\Theta}) \ge 0\Big\}\, .
 \end{align}
Bayes AMP has time complexity $O(c(n)\,n^2)$, for any diverging sequence $c(n)$,
and achieves mean squared error (MSE)
 \begin{align}
 \lim_{n\to\infty}\frac{1}{n}\E\big\{\|\hbtheta^{\sAMP}(\bY)-\btheta\|^2_2\big\}
  = \E[\Theta^2]- q_{\sAMP}(\pi_{\Theta})\, .
\end{align}
 Further, assume that $b\mapsto \Psi_*(b;\pi_{\Theta}):=\min_{q\ge 0} \Psi(q;b,\pi_{\Theta})$ is
 differentiable at $b=0$.
 Then the minimum MSE of any estimator is
 \begin{align}
 \lim_{n\to\infty}\inf_{\hbtheta(\,\cdot\,)}\frac{1}{n}\E\big\{\|\hbtheta(\bY)-\btheta\|^2_2\big\}
  = \E[\Theta^2]- q_{\sBayes}(\pi_{\Theta})\, .\label{eq:AsympBayes}
\end{align}
 \end{theorem}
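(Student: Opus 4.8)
The two displays call for two different, and largely standard, ingredients: a state-evolution analysis for the Bayes-AMP error, and the replica-symmetric mutual-information formula of \cite{lelarge2019fundamental} together with an I-MMSE argument for the Bayes-optimal error. The work lies in assembling these and in using the differentiability hypothesis at the right place.

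\emph{Bayes AMP.} Bayes AMP for \eqref{eq:ModelDef} is the iteration $\bx^{t+1}=\bY f_t(\bx^t)-\mathsf{b}_t f_{t-1}(\bx^{t-1})$, seeded using $\E[\Theta]$, where $f_t$ is the posterior-mean denoiser of the scalar channel $\Yeff=\sqrt{q_t}\,\Theta+G$, $\mathsf{b}_t$ is the Onsager term, and the estimator is $\hbtheta^{\sAMP}=f_{t+1}(\bx^t)$. By the state-evolution theorem for AMP with Lipschitz nonlinearities (\cite{BM-MPCS-2011,bolthausen2014iterative}, specialized to rank-one estimation in \cite{montanari2021estimation}), for each fixed $t$ one gets $\frac1n\E\|\hbtheta^{\sAMP}(\bY)-\btheta\|_2^2\to\E[\Theta^2]-q_{t+1}$, where $q_0=\E[\Theta]^2$ and $q_{t+1}=F(q_t)$ with $F(q):=\E\big[\E[\Theta\mid\sqrt{q}\,\Theta+G]^2\big]=\E[\Theta^2]-\mathrm{mmse}(q)$ and $\mathrm{mmse}(q)$ the MMSE of $\Theta$ given $\sqrt q\,\Theta+G$. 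The key point is to identify $\lim_t q_t$ with $q_{\sAMP}(\pi_{\Theta})$: the I-MMSE identity $\Info'(q;\pi_{\Theta})=\tfrac12\,\mathrm{mmse}(q)$ gives $\Psi'(q;0,\pi_{\Theta})=\tfrac12(q-F(q))$ and $\Psi''(q;0,\pi_{\Theta})=\tfrac12(1-F'(q))$, so stationary points of $\Psi(\cdot;0,\pi_{\Theta})$ are exactly fixed points of $F$ and the condition $\Psi''\ge0$ is exactly $F'\le1$; since $F$ is nondecreasing with $F(0)=\E[\Theta]^2>0$ (here the hypothesis $\E[\Theta]\neq0$ enters, ruling out a fixed point below $\E[\Theta]^2$), the sequence $q_t$ is nondecreasing and converges to the smallest fixed point $q_\infty$ of $F$, which by monotonicity satisfies $F'(q_\infty)\le1$ and is the smallest fixed point of $F$ overall, i.e.\ $q_\infty=q_{\sAMP}(\pi_{\Theta})$. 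Sending $t\to\infty$ after $n\to\infty$ gives the error $\E[\Theta^2]-q_{\sAMP}(\pi_{\Theta})$, and a standard diagonalization transfers this to a single slowly diverging number of iterations $t(n)$, each costing one $n\times n$ matrix--vector product, hence total cost $O(c(n)\,n^2)$.

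\emph{Bayes-optimal error.} Add a scalar Gaussian side channel $\widetilde Y_i=\sqrt{b}\,\theta_i+\widetilde Z_i$, $\widetilde Z_i\sim\normal(0,1)$ i.i.d., and set $I_n(b):=\tfrac1n I(\btheta;\bY,\widetilde{\bY})$, with $\mathrm{MMSE}_n(b)$ the per-coordinate posterior MSE in this augmented model; the I-MMSE relation gives $I_n'(b)=\tfrac12\,\mathrm{MMSE}_n(b)$, and $b\mapsto I_n(b)$ is concave. The replica-symmetric formula of \cite{lelarge2019fundamental}, extended to carry the side observation (as in \cite{barbier2019adaptive}), identifies $\mathcal I(b):=\lim_n I_n(b)=r(b)+\Psi_*(b;\pi_{\Theta})$ for an explicit smooth (in fact quadratic) $r$ with $r'(0)=\tfrac12\E[\Theta^2]$. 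Since each $\Psi(q;\cdot,\pi_{\Theta})$ is affine with slope $-q/2$, the lower envelope $\Psi_*(\cdot;\pi_{\Theta})$ is concave, and differentiability at $b=0$ forces the minimizer in $\min_q\Psi(q;0,\pi_{\Theta})$ to be unique---two distinct minimizers would make the left and right derivatives of $\Psi_*$ at $0$ differ---so it equals $q_{\sBayes}(\pi_{\Theta})$ and, by the envelope theorem, $\Psi_*'(0;\pi_{\Theta})=-\tfrac12 q_{\sBayes}(\pi_{\Theta})$; hence $\mathcal I'(0)=\tfrac12(\E[\Theta^2]-q_{\sBayes}(\pi_{\Theta}))$. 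Finally, pointwise convergence of the concave functions $I_n$ to the limit $\mathcal I$, differentiable at $0$ by the hypothesis, lets one pass to the limit in the derivatives there, giving $\lim_n\mathrm{MMSE}_n(0)=2\mathcal I'(0)=\E[\Theta^2]-q_{\sBayes}(\pi_{\Theta})$, which is \eqref{eq:AsympBayes}.

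\emph{Main obstacle.} The heavy inputs---the AMP state-evolution theorem and the rank-one replica-symmetric free-energy formula---are imported wholesale, so the real subtlety is the role of the differentiability assumption on $\Psi_*(\cdot;\pi_{\Theta})$ at $b=0$: it is exactly what makes $\mathcal I$ differentiable at $b=0$, so that the identity $\lim_n\mathrm{MMSE}_n(b)=2\mathcal I'(b)$---which concavity alone delivers only for a.e.\ $b$---can be read off at $b=0$ itself, and it is also what pins the optimizer to the single value $q_{\sBayes}$. Without it, $\lim_n\mathrm{MMSE}_n$ need not exist (a first-order transition in $b$), so the hypothesis cannot be dropped; a minor point to handle with care is that $b=0$ is an endpoint of the admissible range of side-channel strengths, so the limit-exchange of derivatives is phrased with one-sided derivatives.
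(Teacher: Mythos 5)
Your proof is correct in substance, but it takes a genuinely different route from the paper's for the Bayes-optimal part, and the endpoint issue you flag deserves a slightly more careful treatment than you give it.

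For the I-MMSE step, the paper perturbs by \emph{shifting the mean of the prior}: it works with $\pi_t$ (the law of $\Theta - t_0 + t$, $t_0 := \E[\Theta]$) and the mutual information $\phi_n(t) := \frac{1}{n} I(\bY_t;\btheta_t)$. It shows $\phi_n'(t) = \frac{t}{n}\E\{\|\btheta_t - \hbtheta_t(\bY_t)\|^2\} + O(n^{-1/2})$ and $\phi_n''(t) = -D_n(t) + \MSE(t;n) + O(n^{-1/2})$ with $D_n \ge 0$, then applies a small analysis lemma on a reparametrized function $f_n(t)=-t^{-1/2}\phi_n(t)$ to transfer pointwise convergence to convergence of derivatives. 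Because $t_0$ lies in the interior of the relevant interval, there is no boundary issue. You instead add a scalar Gaussian side channel $\widetilde Y_i = \sqrt{b}\,\theta_i + \widetilde Z_i$, which gives the cleaner exact identity $I_n'(b) = \frac12 \MMSE_n(b)$ and lets you invoke concavity of $I_n$ directly. The key identification you assert—$\mathcal I(b) = r(b) + \Psi_*(b;\pi_\Theta)$ with $r$ quadratic and $r'(0) = \tfrac12\E[\Theta^2]$—is in fact correct: the side channel replaces $\Info(q;\pi_\Theta)$ by $\Info(q+b;\pi_\Theta)$ in the potential, and the substitution $q' = q+b$ shows
$$\min_{q\ge 0}\Big[\tfrac14 q^2 - \tfrac12 \E[\Theta^2]q + \Info(q+b;\pi_\Theta)\Big] = \min_{q'\ge b}\Psi(q';b,\pi_\Theta) + \tfrac14 b^2 + \tfrac12\E[\Theta^2]b ,$$
which agrees with $\Psi_*(b;\pi_\Theta)$ up to a quadratic as long as the constraint $q'\ge b$ is inactive for small $b$; this holds since $q_{\sBayes}>0$. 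So the two perturbations, while distinct, are related by a smooth change of variables, and your hypothesis transfers.

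The one point you should not leave as an aside is the endpoint $b=0$. You are right that the standard convex-analysis fact (pointwise convergence of concave $f_n$ to $f$ differentiable at an interior point implies $f_n' \to f'$ there) does not apply verbatim at the boundary of the domain $[0,\infty)$. But concavity alone rescues this: for $0 < b_1 < b_2$ one has $I_n'(0^+) \ge \tfrac{I_n(b_2)-I_n(b_1)}{b_2 - b_1} \to \tfrac{\mathcal I(b_2)-\mathcal I(b_1)}{b_2-b_1}$, and conversely $I_n'(0^+) \le \tfrac{I_n(b)-I_n(0)}{b} \to \tfrac{\mathcal I(b) - \mathcal I(0)}{b}$; letting $b,b_1,b_2 \downarrow 0$ and using existence of $\mathcal I'(0^+)$ (automatic for concave $\mathcal I$) gives $I_n'(0^+) \to \mathcal I'(0^+)$. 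The role of the differentiability hypothesis on $\Psi_*$ at $b=0$ is then precisely to pin $\mathcal I'(0^+)$ down to $-\tfrac12 q_{\sBayes} + \tfrac12\E[\Theta^2]$ via the envelope theorem (uniqueness of the minimizer); without it the right derivative corresponds to the \emph{largest} minimizer of $\Psi(\cdot;0,\pi_\Theta)$, which need not equal the value $q_{\sBayes}$ obtained by picking some element of $\argmin$. Your AMP part agrees with what the paper imports from \cite{montanari2021estimation,montanari2022statistically}, and your identification $\Psi'(q;0,\pi_\Theta)=\tfrac12(q-F(q))$ giving $q_\infty = q_{\sAMP}$ is the same mechanism as Theorem \ref{thm:BayesSE}'s fixed-point statement. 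In sum: your side-channel argument is simpler where the paper's mean-shift argument is heavier on bookkeeping, at the price of the boundary care you (correctly) anticipated.
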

 \begin{remark}[Differentiability at $b=0$]
 The function $b\mapsto \Psi_*(b;\pi_{\Theta})$
 is concave because it is a minimum of linear functions, and therefore differentiable at all except for
 countably many values of $b$. Hence the differentiability assumption amounts to requiring that $b=0$ is non-exceptional.
 
 Also note that replacing the prior $\pi_{\Theta}$ by its shift $\pi_{\Theta'}$
 where $\Theta'=\Theta+a$ amounts to changing $b$ to $b'=b+2\E[\Theta]a+a^2$.
 Hence the assumption that $b=0$ is non-exceptional is equivalent to the assumption that the
  mean of $\pi_{\Theta}$ is non-exceptional.
 
 We also note that our results will not require such genericity assumptions, 
 which we only introduce to offer a simple comparison to the Bayes optimal estimator.
\end{remark}

\begin{remark}[Nonzero mean assumption]
The assumption $\E[\Theta]\neq 0$ can be removed from Theorem \ref{thm:Earlier}
provided the mean squared error metric is replaced by a different metric that
is invariant under change of relative sign in $\btheta$ and $\hbtheta$.
In that case,
Bayes AMP needs to be modified, e.g.\ via a spectral initialization \cite{montanari2021estimation}.
Here we focus on the case $\E[\Theta]\neq 0$ because this is the most relevant for
the results that follow.
\end{remark}

In this paper we compare Bayes AMP run for a constant number $t=O(1)$ 
of iterations and Low-Deg estimators with degree $D=O(1)$. Assuming $\E[\Theta]\neq 0$ and $\pi_{\Theta}$ is sub-Gaussian, we establish the following (see Theorem~\ref{thm:Main}):
\begin{itemize}
\item For any fixed $t$ and $\eps>0$, there exists a constant $D=D(t,\eps)$ and 
a degree-$D$ estimator that approximates the MSE achieved by Bayes AMP after $t$ iterations within an additive 
error of at most $\eps$.
\item For any constant $D=O(1)$, no degree-$D$ estimator can surpass the asymptotic MSE of Bayes AMP.
Namely, for any fixed $D$ and any degree-$D$  estimator $\hbtheta$, we have
$\E\big\{\|\hbtheta(\bY)-\btheta\|^2_2\big\}/n \ge \E[\Theta^2]- q_{\sAMP}(\pi_{\Theta})-o_n(1)$.
\end{itemize}
Here and throughout, the notation $o_n(1)$ signifies a quantity that vanishes in the limit where $n \to \infty$ with all other parameters (such as $D$) held fixed. The first claim above (`upper bound') is proved by a straightforward polynomial approximation of Bayes AMP. To obtain the second claim (`lower bound'), we develop a new proof technique. Given a Low-Deg estimator for coordinate $i$, we express $\htheta_i(\bY)$ as a sum of terms that
are associated to rooted multi-graphs on vertex set $[n] := \{1,2,\ldots,n\}$ with at most $D$ edges. We group these terms into a constant number of homomorphism classes. 
We next prove that among these classes, only those corresponding to 
 trees yield a non-negligible contribution 
as $n\to\infty$. Finally we show that the latter contribution can be encoded into an 
AMP algorithm and use existing optimality theory for AMP algorithms~\cite{celentano2020estimation,montanari2022statistically} to deduce the result.

This new proof strategy elucidates the relation between AMP 
and Low-Deg algorithms: roughly speaking, AMP algorithms correspond to `tree-structured' low-degree
polynomials, a subspace of all Low-Deg estimators. AMP can effectively evaluate tree-structured polynomials via a dynamic-programming
style recursion with complexity $O(n^2\cdot \mathsf{depth})$ (with $\mathsf{depth}$ the tree depth)
instead of the naive $O(n^{D+1})$. 
 
The rest of the paper is organized as follows. Section \ref{sec:main-res} provides the necessary 
background, formally states our results, and discusses limitations, implications,
and future directions.
Sections \ref{sec:proofUB} and \ref{sec:proofLB} prove the main theorem (Theorem~\ref{thm:Main}),
respectively establishing the upper and lower bounds
on the optimal estimation error achieved by Low-Deg. 
The proofs of several technical lemmas are deferred to the appendices.

\section{Main results}
\label{sec:main-res}

\subsection{Background: AMP}

The  class of AMP algorithms\footnote{More general settings have been studied and analyzed 
in the literature~\cite{javanmard2013state,berthier2020state,gerbelot2021graph,fan2022approximate}.} 
that we will consider in this paper proceed
iteratively by updating a state $\bx^t\in\reals^{n\times \dim}$ according to
the iteration:
\begin{align}
\bx^{t+1} &= \frac{1}{\sqrt{n}}\bY\, F_t(\bx^t) -   F_{t-1}(\bx^{t-1})\sB^{\sT}_t\, ,\;\;\;\; t\ge 0\, .
\label{eq:FirstAMP}
\end{align}
Throughout, we will assume the uninformative initialization $\bx^0=\bzero$, $\sB_0=0$.
In the above $F_t:\reals^{\dim}\to\reals^{\dim}$ is a function which operates on the matrix
$\bx^t\in\reals^{n\times \dim}$ row-wise. Namely for $\bx\in\reals^{n\times \dim}$
with rows $\bx_1^\sT,\dots, \bx_n^{\sT}$, we have
 $F_t(\bx) :=(F_t(\bx_1),\dots,F_t(\bx_n))^{\sT}$.
After $t$ iterations, we estimate the signal $\btheta$ via
$\hbtheta^t :=  g_t(\bx^t)$, for some function $g_t:\reals^{\dim}\to\reals$ (again, applied row-wise).
We will consider $\dim$ and the sequence of functions $F_t$ fixed, while $n\to\infty$.

The sequence of matrices $\sB_t\in\reals^{\dim\times \dim}$ can be taken to be non-random 
(independent of $\bY$)
and will be specified shortly. The high-dimensional asymptotics of the above iteration
is characterized by the following finite-dimensional recursion over variables
$\bmu_t\in\reals^{\dim}$, $\bSigma_t\in\reals^{\dim\times\dim}$, known as `state evolution,'
which is initialized at $(\bmu_0,\bSigma_0)=(\bzero,\bzero)$:
\begin{align}
\bmu_{t+1} &= \E\big\{\Theta F_t(\bmu_t\Theta+\bG_t)\big\}\, ,
\;\;\;\;\; (\Theta,\bG_t)\sim \pi_{\Theta}\otimes \cN(0,\bSigma_t)\label{eq:GeneralSE1}\\
\bSigma_{t+1}& = \E\big\{F_t(\bmu_t\Theta+\bG_t) F_t(\bmu_t\Theta+\bG_t)^{\sT}\big\}\, .
\label{eq:GeneralSE2}
\end{align}
In terms of this sequence, we define $\sB_t$ via
\begin{align}
\sB_t = \E\{\De F_t(\mu_t\Theta+\bG_t)\}\, ,
\end{align}
where $\De F_t = (\partial_iF_{t,j})_{i,j \in [\dim]}$ denotes the weak differential of $F_t$.

The next theorem characterizes the high-dimensional asymptotics of 
the above AMP algorithm. It summarizes results from \cite{BM-MPCS-2011,bayati2015universality}
(see e.g.~\cite{montanari2022statistically} for the application to rank-one estimation).
\begin{theorem}\label{thm:SE}
Assume $\pi_{\Theta}$ is independent of $n$ and has finite moments of all orders.
Further assume that the functions $\{F_t, g_t\}_{t\ge 0}$ are independent of $n$ and either:
$(i)$~for each $t$, $F_t$ and $g_t$ are $L_t$-Lipschitz, for some $L_t<\infty$; or
$(ii)$~for each $t$, $F_t$ and $g_t$ are degree-$B_t$ polynomials, for some $B_t<\infty$.
Then
\begin{align}
\lim_{n\to\infty} \frac{1}{n}\E\big\{\|\hbtheta^t(\bY)-\btheta\|_2^2\big\}
=\E\big\{\big[\Theta-g_t(\bmu_t\Theta+\bG_t)\big]^2\big\}\, ,\label{eq:FirstThmSE}
\end{align}
where expectation is with respect to $(\Theta,\bG_t)\sim \pi_{\Theta}\otimes \cN(0,\bSigma_t)$.
\end{theorem}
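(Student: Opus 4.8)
The plan is to establish the state-evolution identity~\eqref{eq:FirstThmSE} via the Gaussian conditioning technique of Bolthausen, in the form developed by Bayati and Montanari for AMP and adapted to the rank-one model~\eqref{eq:ModelDef}; since the statement is the specialization of~\cite{BM-MPCS-2011,bayati2015universality} to our setting, the real task is to recall the mechanism and verify that the stated hypotheses suffice. First I would separate signal from noise by substituting $\bY=\btheta\btheta^{\sT}/\sqrt n+\bZ$ into~\eqref{eq:FirstAMP}, obtaining
\[
\bx^{t+1}=\frac1n\,\btheta\,\btheta^{\sT}F_t(\bx^t)+\frac1{\sqrt n}\,\bZ\,F_t(\bx^t)-F_{t-1}(\bx^{t-1})\sB_t^{\sT}\, .
\]
The first term produces the mean vector: granting inductively that the empirical law of the rows of $(\btheta,\bx^1,\dots,\bx^t)$ is asymptotically that of $(\Theta,\bmu_1\Theta+\bG_1,\dots,\bmu_t\Theta+\bG_t)$, one gets $\frac1n\btheta^{\sT}F_t(\bx^t)\to\bmu_{t+1}^{\sT}$ by~\eqref{eq:GeneralSE1}. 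For the noise term, let $\mathcal{G}_t$ be the $\sigma$-algebra generated by $\btheta$, $\bx^0$ and the vectors $\bZ F_s(\bx^s)$ for $s<t$; since $\bZ\sim\GOE(n)$ is Gaussian, conditionally on $\mathcal{G}_t$ it is distributed as a deterministic (given $\mathcal{G}_t$) matrix supported on the span of the previously revealed vectors, plus an independent fresh GOE matrix compressed to the orthogonal complement of that span.

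The crux is a cancellation: the deterministic part of $\bZ F_t(\bx^t)$ generates a ``backward'' contribution proportional to $F_{t-1}(\bx^{t-1})$ times a Gram-type matrix, which is matched, up to an asymptotically negligible error, by the Onsager correction $-F_{t-1}(\bx^{t-1})\sB_t^{\sT}$ --- here one uses the defining identity $\sB_t=\E\{\De F_t(\bmu_t\Theta+\bG_t)\}$ together with the Gaussian integration-by-parts relation between $\frac1{\sqrt n}\bZ$ acting on $F_t(\bx^t)$ and $\frac1n\De F_t(\bx^t)$. What survives is the fresh Gaussian piece, which acts on $F_t(\bx^t)$ like an i.i.d.\ Gaussian matrix and therefore, conditionally, injects a Gaussian vector $\bG_{t+1}$ of covariance $\frac1n F_t(\bx^t)^{\sT}F_t(\bx^t)\to\E\{F_t(\bmu_t\Theta+\bG_t)F_t(\bmu_t\Theta+\bG_t)^{\sT}\}=\bSigma_{t+1}$, matching~\eqref{eq:GeneralSE2}. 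Carrying this through an induction on $t$ --- tracking, besides $\bSigma_t$, the full joint cross-time covariances of $(\bG_1,\dots,\bG_t)$ and the concentration of the Gram matrices $\frac1n F_s(\bx^s)^{\sT}F_{s'}(\bx^{s'})$ and of the vectors $\frac1n\btheta^{\sT}F_s(\bx^s)$ --- yields that the empirical distribution of the rows of $(\btheta,\bx^1,\dots,\bx^t)$ converges to the law of $(\Theta,\bmu_1\Theta+\bG_1,\dots,\bmu_t\Theta+\bG_t)$ when tested against any pseudo-Lipschitz function of order $2$ (case~$(i)$), respectively of any finite order (case~$(ii)$). Applied with the squared-error function, this gives $\frac1n\|\hbtheta^t(\bY)-\btheta\|_2^2=\frac1n\sum_{i\le n}\big(\theta_i-g_t(\bx^t_i)\big)^2\to\E\big\{[\Theta-g_t(\bmu_t\Theta+\bG_t)]^2\big\}$, and uniform integrability upgrades convergence in probability to convergence of the expectation in~\eqref{eq:FirstThmSE}.

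The hard part will be case~$(ii)$, since polynomial $F_t,g_t$ are not globally Lipschitz and the contraction and concentration estimates of the classical proof no longer apply verbatim. The key observation is that each $\bx^t$ is a fixed polynomial in the entries of $\bZ$ and of $\btheta$ (the Onsager coefficients being deterministic), so $\frac1n\sum_{i\le n}\|\bx^t_i\|^p$ is bounded in probability for every $p$ --- by Gaussian hypercontractivity and concentration for the $\bZ$-dependence, and by the finite-moments-of-all-orders hypothesis on $\pi_{\Theta}$ for the $\btheta$-dependence. One can then truncate $F_t$ and $g_t$ at a slowly growing threshold, apply the Lipschitz version of the argument, and control the truncation error via these moment bounds; alternatively one may invoke the universality result of~\cite{bayati2015universality} essentially verbatim, after checking that the $n^{-1/2}$-scaled $\GOE(n)$ ensemble meets its normalization hypotheses (zero mean, off-diagonal variance $1/n$, bounded higher moments). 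A secondary, bookkeeping difficulty is the matrix-valued ($\dim$-dimensional) state: one must propagate the $\dim\times\dim$ covariance matrices $\bSigma_{s,s'}$ for all pairs $s,s'\le t$ and verify that the matrix Onsager coefficients $\sB_t$, defined through the weak differential, produce the exact cancellation --- routine, but requiring care with the vector-valued Stein identities.
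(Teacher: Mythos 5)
The paper does not actually prove Theorem~\ref{thm:SE}; it states it as a summary of known results from~\cite{BM-MPCS-2011,bayati2015universality}, with~\cite{montanari2022statistically} cited for the specialization to rank-one estimation. Your proposal is a faithful recollection of the Bolthausen/Bayati--Montanari Gaussian-conditioning argument that underlies those references---signal/noise decomposition, conditioning on the filtration of past iterates, the Onsager cancellation via Gaussian integration by parts, induction on $t$ tracking the cross-time covariances, pseudo-Lipschitz convergence plus uniform integrability for the expectation, and truncation or the universality result of~\cite{bayati2015universality} for the polynomial case~$(ii)$---so it matches the approach of the cited sources rather than anything internal to the present paper.
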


Using the above result, it follows that the optimal function $F_t$
is given by the posterior expectation denoiser. Namely, define the one-dimensional recursion
(with $G\sim\normal(0,1)$):
\begin{align}
q_{t+1} = \E\big\{\E[\Theta \,|\, q_t\Theta+ \sqrt{q_t}G]^2\big\} =:\SE(q_t;\pi_{\Theta}) \, ,\;\;\; q_0=0\, .
\label{eq:BayesSE}
\end{align}
Consider $\dim=1$ in the recursion of Eqs.~\eqref{eq:GeneralSE1}, \eqref{eq:GeneralSE2}, 
so $\mu_t\in\reals$ and $\Sigma_t =: \sigma^2_t\in\reals$, and take
$g_t(x) = F_t(x):= \E[\Theta \,|\, q_t\Theta+ \sqrt{q_t}G=x]$ with $q_t$ defined in Eq.~\eqref{eq:BayesSE}. We have 
\begin{align}
&\mu_t=\sigma^2_t = q_t\, ,\\
& \E\big\{\big[\Theta-g_t(\mu_t\Theta+G)\big]^2\big\} = \E[\Theta^2]-q_{t+1}\, .
\end{align}
The next result establishes that this is indeed the optimal MSE achieved by AMP algorithms.
\begin{theorem}[\cite{montanari2022statistically}]\label{thm:BayesSE}
Assume $\pi_{\Theta}$ is independent of $n$ and has finite second moment.
Then, for any AMP algorithm satisfying the assumptions of Theorem \ref{thm:SE}, we have for any fixed
 $t \ge 0$, 
\begin{align}
\lim_{n\to\infty} \frac{1}{n}\E\big\{\|\hbtheta^t(\bY)-\btheta\|_2^2\big\}
 \ge \E[\Theta^2]-q_{t+1}\, .
\end{align}
Further there exists a sequence of AMP algorithms approaching the lower bound arbitrarily
closely.

Finally, the fixed points of iteration \eqref{eq:BayesSE} (i.e., the points $q$ such that
$q=\SE(q;\pi_{\Theta})$) coincide with the stationary points of $\Psi(q;b=0,\pi_{\Theta})$ 
defined in Eq.~\eqref{eq:PsiDef} (i.e., the points $q$ such that
$\partial_q\Psi(q;0,\pi_{\Theta})=0$).
\end{theorem}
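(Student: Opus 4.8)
The plan is to establish the three claims of the theorem separately: the MSE lower bound for an arbitrary AMP, a matching (approximate) achievability, and the identity between the fixed points of $\SE(\,\cdot\,;\pi_{\Theta})$ and the stationary points of $\Psi(\,\cdot\,;0,\pi_{\Theta})$. For the \emph{lower bound}, I would start from Theorem~\ref{thm:SE}: an AMP satisfying its assumptions, with denoisers $\{F_t,g_t\}$ and state-evolution parameters $(\bmu_t,\bSigma_t)$, has asymptotic MSE $\E\{[\Theta-g_t(\bmu_t\Theta+\bG_t)]^2\}$ with $(\Theta,\bG_t)\sim\pi_{\Theta}\otimes\cN(0,\bSigma_t)$. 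Since the conditional expectation minimizes squared loss, for \emph{any} choice of $g_t$ this is at least $\E[\Theta^2]-\E\{\E[\Theta\,|\,\bmu_t\Theta+\bG_t]^2\}$, so it suffices to prove $\E\{\E[\Theta\,|\,\bmu_t\Theta+\bG_t]^2\}\le q_{t+1}$ for every $t$.

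I would prove this inequality via two observations. First, a Gaussian-channel sufficiency reduction: the statistic $S_t:=\bmu_t^{\sT}\bSigma_t^{+}(\bmu_t\Theta+\bG_t)$, with $\bSigma_t^{+}$ the Moore--Penrose pseudoinverse, is sufficient for $\Theta$ (one checks from \eqref{eq:GeneralSE2} that $\bmu_t$ lies in the range of $\bSigma_t$ for $t\ge1$, and treats $t=0$ directly since $\bmu_0=\bzero$), and $S_t=\gamma_t\Theta+\xi$ with $\gamma_t:=\bmu_t^{\sT}\bSigma_t^{+}\bmu_t$ and $\xi\sim\cN(0,\gamma_t)$ independent of $\Theta$; hence $\E\{\E[\Theta\,|\,\bmu_t\Theta+\bG_t]^2\}=\E\{\E[\Theta\,|\,S_t]^2\}=\SE(\gamma_t;\pi_{\Theta})$. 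Second, an induction showing $\gamma_t\le q_t$: writing $W:=F_t(\bmu_t\Theta+\bG_t)\in\reals^{\dim}$, equations \eqref{eq:GeneralSE1}--\eqref{eq:GeneralSE2} give $\bmu_{t+1}=\E[\Theta W]$ and $\bSigma_{t+1}=\E[WW^{\sT}]$, so $\gamma_{t+1}=\E[\Theta W]^{\sT}(\E[WW^{\sT}])^{+}\E[\Theta W]$ is exactly the squared $L^2$-norm of the orthogonal projection of $\Theta$ onto the linear span of the coordinates of $W$; since $W$ is a measurable function of $\bmu_t\Theta+\bG_t$, that projection has $L^2$-norm at most $\|\E[\Theta\,|\,\bmu_t\Theta+\bG_t]\|_{L^2}$, which gives $\gamma_{t+1}\le\SE(\gamma_t;\pi_{\Theta})$. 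Using the base case $\gamma_0=0=q_0$ and the classical monotonicity of the scalar Gaussian-channel MMSE in its SNR (equivalently, $q\mapsto\SE(q;\pi_{\Theta})$ is nondecreasing, which follows from the I-MMSE identity), induction gives $\gamma_t\le q_t$ for all $t$, and hence the asymptotic MSE is at least $\E[\Theta^2]-\SE(\gamma_t;\pi_{\Theta})\ge\E[\Theta^2]-\SE(q_t;\pi_{\Theta})=\E[\Theta^2]-q_{t+1}$.

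For \emph{achievability}, take $\dim=1$ and $F_t=g_t$ equal to the posterior-mean denoiser $x\mapsto\E[\Theta\,|\,q_t\Theta+\sqrt{q_t}\,G=x]$; the computation displayed just before Theorem~\ref{thm:BayesSE} shows the state evolution then collapses to $q_{t+1}=\SE(q_t;\pi_{\Theta})$ with asymptotic MSE $\E[\Theta^2]-q_{t+1}$. The only subtlety is that this denoiser need not meet the regularity hypotheses of Theorem~\ref{thm:SE} when $\pi_{\Theta}$ only has a finite second moment (it is Lipschitz when $\pi_{\Theta}$ has bounded support, but not necessarily otherwise). I would therefore approximate it by bounded Lipschitz denoisers $F_t^{(M)}$, run the recursion \eqref{eq:GeneralSE1}--\eqref{eq:GeneralSE2} for $\{F_t^{(M)}\}$, and show by induction on $t$ --- using continuity of these maps and of the MSE functional in the denoiser, together with the uniform integrability afforded by the moment assumptions --- that the resulting MSE tends to $\E[\Theta^2]-q_{t+1}$ as $M\to\infty$. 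This yields the claimed sequence of AMP algorithms approaching the bound arbitrarily closely.

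For the \emph{fixed-point identity}, differentiating \eqref{eq:PsiDef} gives $\partial_q\Psi(q;0,\pi_{\Theta})=\frac{1}{2}q-\frac{1}{2}\E[\Theta^2]+\partial_q\Info(q;\pi_{\Theta})$, and the I-MMSE identity applied to $\Yeff=\sqrt{q}\,\Theta+G$ gives $\partial_q\Info(q;\pi_{\Theta})=\frac{1}{2}(\E[\Theta^2]-\E\{\E[\Theta\,|\,\Yeff]^2\})=\frac{1}{2}(\E[\Theta^2]-\SE(q;\pi_{\Theta}))$, using that $\Yeff$ and $q\Theta+\sqrt{q}\,G$ generate the same $\sigma$-algebra; hence $\partial_q\Psi(q;0,\pi_{\Theta})=\frac{1}{2}(q-\SE(q;\pi_{\Theta}))$, so $q$ is a stationary point of $\Psi(\,\cdot\,;0,\pi_{\Theta})$ if and only if $q=\SE(q;\pi_{\Theta})$. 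I expect the main obstacle to be the lower bound, specifically the inductive SNR comparison $\gamma_t\le q_t$ together with a careful treatment of the sufficiency/projection step when $\dim>1$ and $\bSigma_t$, $\E[WW^{\sT}]$ are possibly rank-deficient, and the appeal to monotonicity of the Gaussian-channel MMSE; by comparison, achievability is routine modulo the denoiser-approximation step, and the fixed-point identity is a one-line consequence of I-MMSE.
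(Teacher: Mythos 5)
The paper does not prove this statement: it imports it from \cite{montanari2022statistically}, so there is no internal proof to compare your proposal against. Taken on its own merits, your argument is essentially correct. The core step is the inductive SNR comparison: replacing the $\dim$-dimensional observation $\bmu_t\Theta+\bG_t$ by the scalar sufficient statistic $S_t=\bmu_t^{\sT}\bSigma_t^{+}(\bmu_t\Theta+\bG_t)$ with effective SNR $\gamma_t=\bmu_t^{\sT}\bSigma_t^{+}\bmu_t$, then observing that $\gamma_{t+1}=\E[\Theta W]^{\sT}(\E[WW^{\sT}])^{+}\E[\Theta W]$ is the squared $L^2$-norm of the \emph{linear} projection of $\Theta$ onto $\mathrm{span}(W)$ with $W=F_t(\bmu_t\Theta+\bG_t)$, which is dominated by $\|\E[\Theta\,|\,\bmu_t\Theta+\bG_t]\|_{L^2}^2=\SE(\gamma_t;\pi_{\Theta})$. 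Combined with monotonicity of $\SE$, this gives $\gamma_t\le q_t$ and hence the lower bound. The range condition $\bmu_t\in\mathrm{range}(\bSigma_t)$ you invoke to justify the pseudoinverse is correctly deduced from \eqref{eq:GeneralSE1}--\eqref{eq:GeneralSE2} (any $v$ with $\bSigma_{t+1}v=0$ has $v^{\sT}W=0$ a.s., hence $v^{\sT}\bmu_{t+1}=0$), and the fixed-point identity via I-MMSE is exactly right.

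Two remarks. First, monotonicity of $q\mapsto\SE(q;\pi_{\Theta})$ is not literally ``a consequence of the I-MMSE identity'': I-MMSE gives $\partial_q\Info=\tfrac{1}{2}\mathrm{mmse}(q)$, but the fact that $\mathrm{mmse}(q)$ is nonincreasing is a separate (and standard) fact, following from stochastic degradation of the Gaussian channel or Jensen, which is how the paper itself justifies it in the proof of the upper bound of Theorem~\ref{thm:Main}. This is a small misattribution, not an error. Second, the achievability part is the least developed: you correctly flag that the posterior-mean denoiser $f_t^{\sBayes}$ need not be Lipschitz when $\pi_{\Theta}$ has only a finite second moment, so Theorem~\ref{thm:SE} does not directly apply, but the proposed fix (truncation to bounded Lipschitz $F_t^{(M)}$ and a continuity argument in the state-evolution map as $M\to\infty$) is only sketched; making it rigorous requires controlling the recursion \eqref{eq:GeneralSE1}--\eqref{eq:GeneralSE2} under $L^2$-perturbations of the denoiser and some uniform integrability, all routine but not yet written. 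Modulo that, the proposal is a complete and correct proof.
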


\begin{figure}[t]
\centering
\includegraphics[width=0.7\textwidth]{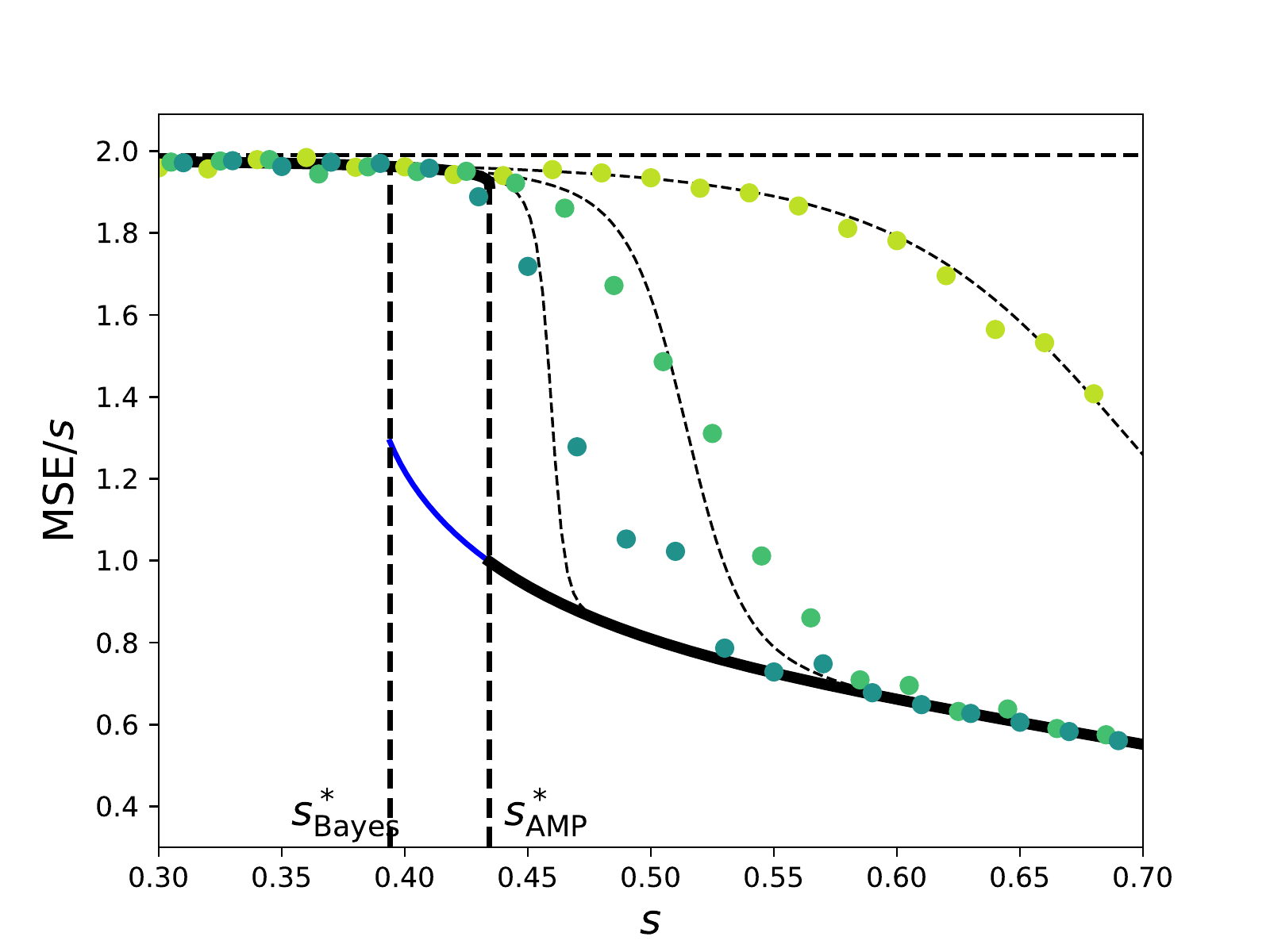} 
\caption{Estimation accuracy in the symmetric rank-one estimation problem~\eqref{eq:ModelDef}, for a one-parameter family of prior
distributions $\pi_{\Theta}^{s}$ defined in Eq.~\eqref{eq:fig-prior}. Solid black curve: asymptotic MSE achieved by Bayes AMP. Blue curve:
Bayes optimal MSE.
Dashed curves: asymptotic MSE for Bayes AMP with
(from bottom to top) $t\in\{5,10,20\}$ iterations.
Circles: average MSE achieved by AMP in simulation for $t\in\{5,10,20\}$.
The black and blue curves coincide outside the interval $[s^*_{\sBayes}, s^*_{\sAMP}]$. Here MSE is normalized by $1/s$ so that the horizontal black dashed line represents the trivial MSE achieved by the constant estimator $\hat\theta_i(\bY) = \EE[\Theta]$.
}\label{fig:amp_vs_bayes}
\end{figure}
As an illustration, Figure \ref{fig:amp_vs_bayes} presents the asymptotic accuracy achieved by Bayes AMP and Bayes optimal estimation, 
as characterized by Theorem \ref{thm:Earlier}.
In this figure we consider the following parametrized family of three-point priors:
\begin{align}\label{eq:fig-prior}
\pi_{\Theta}^s = \frac{1-\eps}{2}\, (\delta_{-\sqrt{s}}+\delta_{+\sqrt{s}}) +\eps\,\delta_{\sqrt{s/\eps}}\, .
\end{align}
We set $\eps=0.01$ and sweep $s$ (which measures the signal-to-noise ratio).
We compare the predicted accuracy for Bayes AMP with numerical simulations for $n=2\cdot 10^4$
and $t\in\{5,10,20\}$ iterations, averaged over $50$ realizations. 

\subsection{Background: Low-Deg}

We say that $\hbtheta:\reals^{n\times n}\to\reals^n$,
$\bY\mapsto \hbtheta(\bY)$ is a Low-Deg estimator of degree $D$ if its coordinates are polynomials of maximum degree $D$ in the matrix entries $(Y_{ij})_{1 \le i\le j\le n}$. The coefficients of these polynomials may depend on $n$ but not on $\bY$.
Let $\RR[\bY]_{\le D}$ denote the space of all polynomials in the variables $(Y_{ij})_{1\le i\le j\le n}$ of degree at most $D$. 
We use $\LD(D;n)$ to denote the set of  estimators whose coordinates
are degree $D$ polynomials:
\begin{align}
\LD(D;n) := \big\{\hbtheta:\reals^{n\times n}\to \reals^n \;:\;\; \forall i, \, \htheta_i \in
\RR[\bY]_{\le D}\big\}.
\end{align}
With an abuse of notation we will often refer to an estimator $\hbtheta$
(e.g.\ a Low-Deg estimator $\hbtheta\in\LD(D;n)$), but really mean a sequence of estimators
$\hbtheta_n$ indexed by the dimension $n$.

\subsection{Statement of main results}

Recall that a random variable $X$ is called \emph{sub-Gaussian} if $\|X\|_{\psi_2} < 0$ where the sub-Gaussian norm is defined as
\begin{equation}\label{eq:subG-norm}
\|X\|_{\psi_2} := \inf\{t > 0 \,:\, \EE \exp(X^2/t^2) \le 2\} \, .
\end{equation}
For example, any distribution with bounded support is sub-Gaussian.

\begin{theorem}\label{thm:Main}
Assume $\pi_{\Theta}$ is independent of $n$ and sub-Gaussian, with $\E[\Theta] \ne 0$. For any $\eps>0$, there exists $D(\eps)<\infty$ and a (family of) estimators
$\hbtheta_{\eps}=\hbtheta_{\eps,n}\in \LD(D(\eps);n)$ such that
\begin{align}
\lim_{n\to\infty} \frac{1}{n}\E\big\{\|\hbtheta_{\eps}(\bY)-\btheta\|_2^2\big\}
\le \E[\Theta^2]-q_{\sAMP}(\pi_{\Theta})+\eps\, .\label{eq:UpperBound}
\end{align}
Further, for any constant $D$, 
\begin{align}
\lim_{n\to\infty} \inf_{\hbtheta\in \LD(D;n)}\frac{1}{n}\E\big\{\|\hbtheta(\bY)-\btheta\|_2^2\big\}
\ge \E[\Theta^2]-q_{\sAMP}(\pi_{\Theta})\,.\label{eq:LowerBound}
\end{align}
\end{theorem}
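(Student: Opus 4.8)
The plan is to treat the two bounds separately, with the lower bound \eqref{eq:LowerBound} being the technical heart of the argument. For the upper bound \eqref{eq:UpperBound}, I would run Bayes AMP for a constant number $t$ of iterations and observe that, by Theorem~\ref{thm:BayesSE}, its asymptotic MSE converges to $\E[\Theta^2]-q_{t+1}$, which tends to $\E[\Theta^2]-q_{\sAMP}(\pi_{\Theta})$ as $t\to\infty$ (since $q_t\uparrow q_{\sAMP}$, the smallest stable fixed point of $\SE$). It then suffices to show that $t$ rounds of Bayes AMP with the uninformative initialization $\bx^0=\bzero$ can be approximated, in normalized $\ell_2$ error, by a constant-degree polynomial estimator. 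The denoisers $F_s = g_s = \E[\Theta \mid q_s\Theta + \sqrt{q_s}G = \cdot\,]$ are Lipschitz and, under the sub-Gaussian assumption on $\pi_{\Theta}$, can be uniformly approximated on the relevant range by polynomials of bounded degree; substituting these polynomial denoisers into \eqref{eq:FirstAMP} produces a polynomial in $\bY$ of degree growing only with $t$ and the approximation degree. A continuity/stability argument for AMP under perturbation of the denoisers (state evolution is continuous in the $F_s$, e.g. via Theorem~\ref{thm:SE} case $(ii)$) shows the MSE changes by at most $\eps$. This is the routine direction.

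For the lower bound, fix $D$ and an arbitrary $\hbtheta \in \LD(D;n)$; by symmetry among coordinates it is enough to control $\E[(\htheta_i(\bY)-\theta_i)^2]$ averaged over $i$. The key structural step is to expand $\htheta_i(\bY)$ in the monomial basis: each monomial of degree $\le D$ in the entries $(Y_{jk})$ is naturally indexed by a multigraph on $[n]$ with at most $D$ edges, rooted at $i$ (the root records the coordinate being estimated). I would group these monomials into \emph{homomorphism classes} — isomorphism classes of the underlying abstract rooted multigraph shape — of which there are only $O_D(1)$ many since $D$ is constant. The plan is then a variance/second-moment computation: for each shape $H$, estimate the total contribution of all monomials with shape $H$ to $\E[\|\hbtheta - \btheta\|_2^2]/n$, as a function of $n$ and of the $\ell_2$ mass the estimator places on that class. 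The crucial combinatorial claim is that only \emph{tree} shapes (connected, acyclic, and moreover with the right vertex/edge count relative to the number of ``free'' summation indices) survive in the limit $n\to\infty$: any shape containing a cycle, or any extra edge forcing index coincidences, is suppressed by a power of $n$, using that distinct $Z_{jk}$ are independent mean-zero and that $\pi_{\Theta}$ has all moments (so each $\theta_j$ contributes an $O(1)$ factor). I would make this precise by the standard moment-counting heuristic — number of free indices versus normalization $n^{-(\text{edges})/2}$ — being careful that the cross term $\E[\htheta_i \theta_i]$ and the second moment $\E[\htheta_i^2]$ are handled on the same footing.

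Having reduced to tree-structured polynomials, the final step is to show that any such polynomial estimator is realizable (asymptotically, up to $o_n(1)$ in normalized MSE) by an AMP algorithm in the sense of Section~2.1: the tree expansion is exactly the unrolling of the AMP iteration \eqref{eq:FirstAMP}, where the alternation of matrix–vector multiplication by $\bY/\sqrt n$ and row-wise (polynomial) nonlinearities $F_t$ generates precisely sums over rooted trees of bounded depth, and the Onsager correction term $-F_{t-1}(\bx^{t-1})\sB_t^{\sT}$ removes exactly the ``backtracking'' contributions that the naive tree sum would double-count. Thus the tree-part of $\hbtheta$ corresponds to an AMP algorithm with polynomial denoisers of bounded degree (covered by Theorem~\ref{thm:SE} case $(ii)$), and then Theorem~\ref{thm:BayesSE} gives $\lim_n \frac1n \E\|\hbtheta^t(\bY)-\btheta\|_2^2 \ge \E[\Theta^2]-q_{t+1} \ge \E[\Theta^2]-q_{\sAMP}(\pi_{\Theta})$, where the last inequality is monotonicity of the state-evolution recursion toward its first stable fixed point. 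Combining with the negligibility of the non-tree classes yields \eqref{eq:LowerBound}.

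The main obstacle, I expect, is the second-moment bookkeeping in the middle step: one must show uniformly over \emph{all} degree-$D$ polynomials — including adversarially chosen, $n$-dependent, possibly huge coefficients — that the non-tree homomorphism classes contribute $o_n(1)$ to the normalized MSE, \emph{without} a prior bound on the coefficient magnitudes (a large-coefficient estimator could a priori amplify a lower-order-in-$n$ term). Handling this likely requires an orthogonal decomposition of $\RR[\bY]_{\le D}$ adapted to the graph structure, so that the contribution of each class to $\E\|\hbtheta-\btheta\|_2^2$ is manifestly nonnegative and separately controlled, together with a careful argument that projecting an estimator onto its tree-part cannot increase its MSE by more than $o_n(1)$ — essentially a Pythagorean statement in a suitable inner product on polynomials of $\bY$. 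Getting the normalization exponents exactly right for every shape, and verifying the Onsager term implements precisely the right cancellation, is where the real work lies.
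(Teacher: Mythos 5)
Your proposal follows essentially the same route as the paper: for the upper bound, run Bayes AMP for $t$ iterations and replace the denoisers by polynomial approximants (the paper makes this precise via weighted polynomial approximation, using the sub-Gaussian tail to control the approximation error of the linear-growth Bayes denoiser); for the lower bound, reduce to tree-structured polynomials by grouping monomials into graph-shape classes, show only trees survive, map tree polynomials to a message-passing/AMP iteration, and invoke the AMP optimality theorem. You correctly anticipate that the crux is controlling non-tree contributions uniformly over adversarial coefficients via an orthogonal-style decomposition and a Pythagorean/projection argument — the paper realizes this by working in a basis of Hermite polynomials centered over connected components, proving the resulting Gram matrix $\bM_n$ has $\lambda_{\min}(\bM_n)=\Omega(1)$ uniformly in $n$, and that $\bM_n$ converges to a block-diagonal limit decoupling trees from non-trees, so that the explicit optimizer $\bM_n^{-1}\bc_n$ asymptotically loses nothing when restricted to the tree block.
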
 

The lower bound~\eqref{eq:LowerBound} only requires $\pi_\Theta$ to have all moments finite, 
 rather than sub-Gaussian. The upper bound~\eqref{eq:UpperBound} likely also holds under weaker conditions on $\pi_\Theta$ than sub-Gaussian, but we have not attempted to explore this here. We conclude this summary of our results with a few remarks and directions for future work.

\paragraph{Sharp characterization for Low-Deg.} As a by-product of our results, we obtain a sharp characterization of the optimal accuracy of Low-Deg estimators
with constant degree. To the best of our knowledge, this is the first example 
of such a characterization. Prior work on low-degree estimation~\cite{schramm2022computational} gave bounds on the signal-to-noise ratio (tight up to log factors) under which the low-degree estimation accuracy is either asymptotically perfect or trivial. In our work we are in a setting where the Low-Deg MSE converges to a non-trivial constant, and we pin down the exact constant.

Similarly, our results are sharper than existing computational lower bounds based on the `overlap gap property'~\cite{gamarnik2021overlap}, which rule out a different (incomparable) class of algorithms including certain Markov chains.

\paragraph{Optimality of AMP.} Our results support the conjecture that AMP is asymptotically optimal among computationally-efficient procedures
for certain high-dimensional estimation problems. We are also aware of some problems for which AMP is strictly sub-optimal and has to be modified to capture higher-order correlations
between variables. One such examples is provided by the spiked tensor model 
\cite{richard2014statistical,hopkins2015tensor,wein2019kikuchi}, namely the
generalization of the above model to higher order tensors, where observations take the form
$\bY = n^{-(k-1)/2}\,\btheta^{\otimes k}+\bZ$ for some $k \ge 3$.

We believe that a generalization of the proof techniques developed in this paper can help
distinguish in a principled way which problems can be optimally solved using AMP.
These ideas may also provide guidance to modify AMP for problems 
in which it is sub-optimal. The key properties of the rank-one estimation problem that cause AMP to be optimal among Low-Deg estimators are established in Lemmas~\ref{lem:c-lim} and~\ref{lem:M-lim}; notably these include the block diagonal structure of a certain correlation matrix $\bM_\infty$. As a result of these properties, the best Low-Deg estimator is `tree-structured' and can thus be computed using an AMP algorithm.

\paragraph{Zero-mean prior.} Our result on the equivalence of AMP and Low-Deg actually applies to the case $\E[\Theta]=0$ as well. However, it must be emphasized that
the statement concerns AMP with uninformative initialization and $O(1)$ iterations,
and Low-Deg estimators with $O(1)$ degree. When $\E[\Theta]=0$, the MSE of both these algorithms converges to $\E[\Theta^2]$ as $n\to\infty$: 
it is no better than random guessing.
 
On the other hand, initializing AMP with a spectral initialization $c_n \, \bv_1(\bY)$ (for a suitable scalar $c_n$)
yields the accuracy stated in Theorem \ref{thm:Earlier}, even for $\E[\Theta]=0$. 
Since the leading eigenvector can be approximated by power iteration, we believe it is possible to show that the same accuracy is achievable 
by AMP when run for $O(\log n)$ iterations (and a random initialization),
or by Low-Deg for $O(\log n)$ degree. However generalizing the lower bound of Eq.~\eqref{eq:LowerBound}
to logarithmic degree goes beyond the analysis developed here.
 
\paragraph{Higher degree.} We expect the optimality of Bayes
 AMP to hold within a significantly broader class of low-degree estimators, namely $O(n^c)$-degree estimators for any $c\in (0,1)$. Again, this extension is beyond our current proof
technique. Heuristically, polynomials of degree $O(n^c)$ can be thought of as a proxy for algorithms of runtime $\exp(n^{c + o_n(1)})$ (which is the runtime required to naively evaluate such a polynomial); see e.g.~\cite{ding2019subexponential}.

\section{Proof of Theorem \ref{thm:Main}: Upper bound}
\label{sec:proofUB}

Recall from Theorem~\ref{thm:BayesSE} that the fixed
points of the state evolution recursion \eqref{eq:BayesSE}  coincide with
the stationary points of $\Psi(q,0;\pi_{\Theta})$.
Further, it is easy to see from the definition Eq.~\eqref{eq:BayesSE} that 
$q\mapsto \SE(q;\pi_{\Theta})$ is a non-decreasing function with 
$\SE(0;\pi_{\Theta}) =\E[\Theta]^2>0$.
(This follows from the fact that the minimum mean square error is a non-increasing function
of the signal-to-noise ratio or, equivalently, from Jensen inequality.)
 As a consequence $(q_t)_{t\ge 0}$ is a non-decreasing sequence with  $\lim_{t\to\infty} q_t = q_{\sAMP}$.
Let $t_*$ be such that $q_{t_*+1}\ge q_{\sAMP}-\eps/2$.

Consider the special case of the AMP algorithm of Eq.~\eqref{eq:FirstAMP} with $\dim = 1$, 
with $F_t=f_t:\reals\to\reals$,
and specialize the state evolution recursion  of
Eqs.~\eqref{eq:GeneralSE1}, \eqref{eq:GeneralSE2} to this case. Namely we define
recursively $\mu_s, \sigma_s^2$ for $ s\ge 0$ via
\begin{align}
\mu_{t+1} &= \E\big\{\Theta f_t(\mu_t\Theta+\sigma_t\, G)\big\}\, ,
\;\;\;\;\; (\Theta,G)\sim \pi_{\Theta}\otimes \cN(0,1)\label{eq:Dim1-SE1}\\
\sigma^2_{t+1}& = \E\big\{f_t(\mu_t\Theta+G_t)^2\big\}\, .
\label{eq:Dim1-SE2}
\end{align}
We claim that for $t\ge 0$ the following holds. For any $\eps_0>0$, we 
can construct polynomials $(f_s)_{0\le s\le t}$ of degree $(D_s(\eps_0))_{0 \le s\le t}$ 
(independent of $n$),  we have, 
\begin{align}
\label{eq:ApproxSE}
\big|\mu_t-q_t\big|\le \eps_0\, ,\;\; \big|\sigma^2_t-q_t\big|\le\eps_0\, .
\end{align}
Once this is established, the desired upper bound \eqref{eq:UpperBound} follows by taking $t=t_*+1$ and $\eps_0=\eps/8$.
Consider indeed the AMP estimator $\hbtheta^t(\bY):=f_t(\bx^t)$ where $\bx^t$ is defined by 
the AMP iteration~\eqref{eq:FirstAMP} with $\dim = 1$, $F_t = f_t$
(that is, we choose $g_t=f_t$). 
This estimator is a polynomial of degree $D\le (D_1+1)(D_2+1)\cdots (D_{t_*}+1)$ and,
by Eq.~\eqref{eq:FirstThmSE} (specialized to $\dim=1$), we have
\begin{align}
\lim_{n\to\infty} \frac{1}{n}\E\big\{\|\hbtheta^{t_*}(\bY)-\btheta\|_2^2\big\} &
= \E\big\{\big[\Theta-f_{t_*}(\mu_{t_*}\Theta+\sigma_{t_*} G)\big]^2\big\}\\
&=\E[\Theta^2]-2\mu_{t_*+1}+\sigma_{t_*+1}^2\\
&\le
\E[\Theta^2]-q_{t_*+1}+\frac{\eps}{2}\le \E[\Theta^2]-q_{\sAMP}+\eps\, .
\end{align}

We are left with the task to prove the claim \eqref{eq:ApproxSE}, which we will
do by induction on $t$. The claim holds for $t=0$, and we then assume as an induction hypothesis that it holds for a certain $t$. 
We will denote by $f^{\sBayes}_t(x):= \E[\Theta \,|\, q_t\Theta+ \sqrt{q_t}G=x]$
the ideal nonlinearity.
Now let $\eps_{1,k}\downarrow 0$ be a sequence converging to $0$
and select $f_{s}=f_{k,s}$, $s\le t-1$ according to the induction hypothesis with
$\eps_0=\eps_{1,k}$. We will denote by $\mu_{k,t},\sigma^2_{k,t}$ the corresponding state evolution
quantities. In particular, we can assume without loss of generality
$\mu_{k,t},\sigma^2_{k,t}\le 2q_t$.
 By the state evolution recursion, we have (expectation here is with respect to
 $(\Theta,G)\sim \pi_{\Theta}\otimes\normal(0,1)$)
\begin{align*}
\big|\mu_{k,t+1}-q_{t+1}\big| &=
\Big|\E\Big\{\Theta[f_{k,t}(\mu_{k,t}\Theta+\sigma_{k,t} G) -f^{\sBayes}_t(q_t\Theta+\sqrt{q_t}G)]\Big\}\Big|\\
&\le \Big|\E\Big\{\Theta[f^{\sBayes}_t(\mu_{k,t}\Theta+\sigma_{k,t} G) -
f^{\sBayes}_t(q_t\Theta+\sqrt{q_t}G)]\Big\}\Big|\\
&\phantom{AAAAAAA}+
\Big|\E\Big\{\Theta[f_t(\mu_{k,t}\Theta+\sigma_{k,t} G) -
f^{\sBayes}_t(\mu_{k,t}\Theta+\sigma_{k,t} G)]\Big\}\Big|\\
&=: B_1(k)+B_2(k)\, .
\end{align*}
It is a general fact about Bayes posterior expectation
that $f_t^{\sBayes}$ is continuous with $|f_t^{\sBayes}(x)|\le C_t(1+|x|)$ for some constant $C_t$
(see Lemma \ref{lemma:BayesFact} in Appendix \ref{app:UpperBound}).
Further $\mu_{k,t}\Theta+\sigma_{k,t} G\stackrel{a.s.}{\longrightarrow }q_t\Theta+\sqrt{q_t}G$
as $k\to\infty$. By dominated convergence, we have $\lim_{k\to\infty}B_1(k)=0$ and
therefore we can choose $k_0$ so that for all $k\ge k_0$, $B_1(k)\le \eps_0/2$.

Next consider term $B_2(k)$. Denoting by $\tau := \|\Theta\|_{\psi_2}$ the sub-Gaussian 
norm of $\pi_{\Theta}$ (see Eq.~\eqref{eq:subG-norm}), we have that $Z_{k,t}:=\mu_{k,t}\Theta+\sigma_{k,t} G$
is sub-Gaussian with $\|Z_{k,t}\|_{\psi_2}^2\le \mu_{k,t}^2\tau^2 + \sigma_{k,t}^2\le 
4(q_t^2\tau^2+q_t)$. We let $\tau^2_t:=4(q_t^2\tau^2+q_t)$ denote this upper bound.
Since $f^{\sBayes}_t$ is continuous with $|f^{\sBayes}(x)|\le C_t(1+|x|)$,
by weighted approximation theory \cite{lorentz2005approximation}, we can choose
$f_t$ a polynomial such that
\begin{align}
\sup_{u\in\reals}\big|f_t(u) -
f^{\sBayes}_t(u)\big| \exp\Big(-\frac{u^2}{4\tau^2_t}\Big)\le \frac{\eps_0}{4\E[\Theta^2]^{1/2}}\,.
\end{align}
Using this polynomial approximation, we get
\begin{align}
 B_2(k)&\le \E\big[\Theta^2\big]^{1/2}
\E\big\{[f_t(Z_{k,t}) -f^{\sBayes}_t(Z_{k,t})]^2\big\}^{1/2}\\
& \le \frac{1}{4}\eps_0\E[\exp({Z_{k,t}^2/2\tau_{k,t}^2})]^{1/2}\le \frac{1}{2}\eps_0\, .
 \end{align}
This completes the proof of the induction claim in the first inequality in Eq.~\eqref{eq:ApproxSE}.
The second inequality is treated analogously.

\section{Proof of Theorem \ref{thm:Main}: Lower bound}
\label{sec:proofLB}

Recall that $\RR[\bY]_{\le D}$ denotes the family of polynomials of degree at most
$D$ in the variables $(Y_{ij})_{1\le i\le j\le n}$. The key of our proof is to show
that the asymptotic estimation accuracy of Low-Deg estimators is not reduced
if we replace polynomials by a restricted family of tree-structured polynomials, 
which we will denote by $\RR[\bY]_{\le D}^{\cT}$. 
This reduction is presented in Sections~\ref{sec:Tree1} and~\ref{sec:KeyPropo}. 
We will then establish a connection between
tree-structured polynomials and AMP algorithms, and rely on known optimality 
theory for AMP estimators, cf.\ Section \ref{sec:TreeToAMP}.

\subsection{Reduction to tree-structured polynomials}
\label{sec:Tree1}

Let $\cT_{\le D}$ denote the set of rooted trees up to root-preserving isomorphism,
 with at most $D$ edges. 
  (Trees must be connected, with no self-loops or multi-edges allowed. The tree with one vertex and no edges is included.) We denote the root vertex of  $\Tree \in \cT_{\le D}$ by $\circ$. For $\Tree \in \cT_{\le D}$, define a \emph{labeling rooted at $1$} (or, simply, a labeling) of $\Tree$ to be a function  $\phi: V(\Tree) \to [n]$ such that $\phi(\circ) = 1$. (Vertex 1 is distinguished because we will be considering estimation of $\theta_1$.)
 
Given a graph $G=(V,E)$, we will denote by $\dist_{\Graph}(u,v)$ the graph distance  between two vertices $u,v \in V$.
\begin{definition}\label{def:NonReversing}
A labeling $\phi$ of $\Tree \in \cT_{\le D}$ is said to be \emph{non-reversing} if,
for every pair of distinct vertices $u,v \in V(\Tree)$ with the same label 
(i.e., $\phi(u) = \phi(v)$), one of the following holds:
\begin{itemize}
\item $\dist_{\Tree}(u,v) > 2$, or
\item $\dist_{\Tree}(u,v) = 2$ and $u,v$ have the same distance from the root 
(i.e., $\dist_{\Tree}(u,\circ) = \dist_{\Tree}(v,\circ)$).
\end{itemize}
(The latter holds if and only if $u,v$ are both children of a common vertex $w \in V(\Tree)$.) We denote by  $\nr(\Tree)$  the set of all non-reversing labelings of $\Tree$.
\end{definition}

For each $\Tree \in \cT_{\le D}$, define the associated polynomial
\begin{equation}\label{eq:F_A}
\Poly_{\Tree}(\bY) = \frac{1}{\sqrt{|\nr(\Tree)|}} \sum_{\phi \in \nr(\Tree)} \, 
\prod_{(u,v) \in E(\Tree)} Y_{\phi(u),\phi(v)}.
\end{equation}
\begin{definition}
 We define $\RR[\bY]_{\le D}^{\cT}$ to be the set of all polynomials of the form
\begin{equation}\label{eq:p-defn}
p(\bY) = \sum_{\Tree \in \cT_{\le D}} \hat{p}_{\Tree} \Poly_{\Tree}(\bY)
\end{equation}
for coefficients $\hat{p}_{\Tree} \in \RR$.
\end{definition}
 Note that any $p \in \RR[\bY]_{\le D}^{\cT}$ 
is a polynomial of degree at most $D$, that is 
\begin{align}\label{eq:poly-containment}
\RR[\bY]_{\le D}^{\cT} \subseteq \RR[\bY]_{\le D}\, .
\end{align}
We are now in position to state our result about reduction to tree-structured polynomials.
\begin{proposition}\label{prop:tree-vs-poly}
Assume $\pi_{\Theta}$ to have finite moments of all orders.
Let $\psi:\reals\to \reals$ be a measurable function, and assume all moments
of $\psi(\theta_1)$  to be finite.
For any fixed $\pi_{\Theta}, \psi, D$ there exists a fixed ($n$-independent) choice of coefficients
 $(\hat{p}_{\Tree})_{\Tree \in \cT_{\le D}}$ 
such that the associated polynomial $p = p_n \in \RR[\bY]^\cT_{\le D}$ defined by~\eqref{eq:p-defn} satisfies
\begin{align}\label{eq:tree-vs-poly}
\lim_{n \to \infty} \EE[(p(\bY) - \psi(\theta_1))^2] = \lim_{n \to \infty} \inf_{q \in \RR[\bY]_{\le D}} \EE[(q(\bY) - \psi(\theta_1))^2] \, .
\end{align}
 (In particular, the above limits exist.)
\end{proposition}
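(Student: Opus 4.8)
The plan is to expand an arbitrary degree-$D$ polynomial $q(\bY)$ in the monomial basis indexed by multigraphs on $[n]$, and show that, after optimizing the coefficients, only the contributions corresponding to non-reversing tree labelings survive in the limit $n\to\infty$. Concretely, writing $q(\bY)=\sum_{\Graph}\hat q_{\Graph}\, Y^{\Graph}$ where $\Graph$ ranges over multigraphs with at most $D$ edges on vertex set $[n]$ (with $\phi(\circ)=1$ distinguished), I would first group these monomials into \emph{homomorphism classes}: two labeled multigraphs belong to the same class if they are isomorphic as rooted multigraphs after identifying vertices with equal labels. The number of such classes is bounded by a constant depending only on $D$ (not on $n$), which is what makes the argument finite-dimensional. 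For each class I define a normalized basis vector (analogous to $\Poly_{\Tree}$ in~\eqref{eq:F_A}), so that the estimation problem $\inf_q \EE[(q(\bY)-\psi(\theta_1))^2]$ becomes, in the limit, a least-squares problem in a fixed finite-dimensional space: one needs the Gram matrix $\bM$ of inner products $\EE[\Poly_{\Graph}(\bY)\Poly_{\Graph'}(\bY)]$ and the vector $\bc$ of inner products $\EE[\Poly_{\Graph}(\bY)\psi(\theta_1)]$, both of which I claim converge as $n\to\infty$ (these are presumably Lemmas~\ref{lem:c-lim} and~\ref{lem:M-lim}).

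The heart of the matter is a combinatorial/probabilistic estimate on these inner products. Since $\bY=\btheta\btheta^\sT/\sqrt n+\bZ$ with $\bZ\sim\GOE(n)$ and $\btheta$ i.i.d., expanding $\EE[\Poly_{\Graph}(\bY)\Poly_{\Graph'}(\bY)]$ produces a sum over pairs of labelings, and Wick's theorem for the Gaussian part forces edges to be matched in pairs (or absorbed into the rank-one signal term, contributing factors of $\theta$). A standard moment-counting argument then shows that the number of free labels (hence the power of $n$ one gets from summing over $[n]$) is maximized exactly when the underlying multigraph is a tree and the labeling is non-reversing — any cycle, any repeated edge, or any "reversing" coincidence of labels at distance $\le 2$ costs at least one factor of $n$ relative to the normalization $1/\sqrt{|\nr(\Tree)|}$, which is of order $n^{(\#\text{vertices}-1)/2}$. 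Therefore all non-tree classes, and all reversing tree labelings, have vanishing norm, and in the $n\to\infty$ limit the span of $\{\Poly_{\Graph}\}$ over all multigraphs collapses onto $\RR[\bY]^\cT_{\le D}$. I would carry this out in the order: (1) set up the multigraph monomial expansion and homomorphism classes; (2) compute/estimate $\bM$ and $\bc$ via Wick + label counting, isolating the tree/non-reversing leading terms; (3) observe that the least-squares optimum over the full space equals the optimum over the tree subspace in the limit, because the extra basis directions contribute nothing; (4) read off that the optimal tree coefficients $(\hat p_{\Tree})$ are $n$-independent (they solve a fixed finite linear system $\bM_\infty \hat p = \bc_\infty$) and conclude~\eqref{eq:tree-vs-poly}.

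A few technical points need care. First, integrability: since $\psi(\theta_1)$ and the $\theta_i$ only have all moments finite (not bounded), I must check that all the expectations $\EE[\Poly_{\Graph}(\bY)^2]$ and $\EE[\Poly_{\Graph}(\bY)\psi(\theta_1)]$ are finite and that the interchange of limit and infimum is legitimate — this is where having \emph{all} moments finite (rather than sub-Gaussian) is used, and it is presumably why the lower bound holds under the weaker hypothesis. Second, one must be careful that the infimum over $\RR[\bY]_{\le D}$ is actually attained (or approached) by a sequence whose coefficients, after the normalization, stay bounded; convexity of the quadratic objective and positive-semidefiniteness of the (limiting) Gram matrix handle this, but the possible degeneracy of $\bM_\infty$ (it need not be strictly positive definite) means I should phrase the conclusion via projection onto $\mathrm{range}(\bM_\infty)$ rather than by inverting $\bM_\infty$. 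Third, the precise bookkeeping in Definition~\ref{def:NonReversing} — why distance-$2$ sibling coincidences are \emph{allowed} while distance-$2$ "grandparent" coincidences are not — comes out of the $\GOE$ symmetry $Z_{ij}=Z_{ji}$ combined with $\EE Z_{ii}^2=2\ne 1$; getting this case analysis exactly right in the label-counting step is the main obstacle, and the place where an error would most easily creep in.

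The step I expect to be hardest is (2), the asymptotic evaluation of the Gram matrix $\bM$: one must simultaneously track the Gaussian Wick pairings, the rank-one signal insertions, the overcounting from automorphisms of each multigraph, and the exact exponent of $n$, and then verify that the surviving $n^0$ terms assemble into precisely the inner-product structure of $\{\Poly_{\Tree}\}$ with the normalization chosen in~\eqref{eq:F_A}. Everything downstream (the least-squares reduction and the $n$-independence of the optimal coefficients) is then essentially linear algebra.
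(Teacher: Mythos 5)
Your high-level scaffold (finite-dimensional basis indexed by rooted graphs up to isomorphism, Gram matrix $\bM_n$ and correlation vector $\bc_n$, least-squares reduction, limits as $n\to\infty$) is the paper's scaffold, but the key technical choice is missing, and without it the central combinatorial claim is false. You propose to work in the raw monomial basis $\bY^{\Graph}$, normalized by $1/\sqrt{|\nr(\Graph)|}\sim n^{-(|V(\Graph)|-1)/2}$, and assert that after this normalization all non-tree classes ``have vanishing norm'' so that the span collapses onto $\RR[\bY]^{\cT}_{\le D}$. That is not what happens. Take $\Graph$ a single double-edge: then $\Poly_{\Graph}(\bY)=\frac{1}{\sqrt{n-1}}\sum_{j\ne 1}Y_{1j}^2$, and since $\E[Y_{1j}^2]\approx 1$ one gets $\E[\Poly_{\Graph}(\bY)]\approx\sqrt{n-1}\to\infty$; its norm does not vanish, and its correlation with $\psi(\theta_1)$ diverges when $\E\psi(\theta_1)\ne 0$. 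More generally, any graph with an even multi-edge contributes a non-decaying (or diverging) mean. So in the monomial basis neither $\bc_n$ nor $\bM_n$ converges, and the ``non-tree directions contribute nothing'' step fails.

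The paper's fix is to replace the monomial basis by the \emph{centered Hermite} basis $\PolH_{\Graph}$: expand in multivariate Hermite polynomials $h_{\balpha}(\bY)$ adapted to the Gaussian noise, and additionally subtract the mean of each non-empty connected component (Eq.~\eqref{eq:H_alpha}). This centering is not cosmetic — it is exactly what makes $\bc_n$ and $\bM_n$ converge and what produces the asymptotic block-diagonal structure $\bR_\infty=\bzero$, $\be_\infty=\bzero$ (Lemmas~\ref{lem:c-lim},~\ref{lem:M-lim}), which is what you need to conclude that the optimal estimator lives in the tree block. Your Wick/label-counting intuition is correct once the Hermite orthogonality and per-component centering are in place, but applied to raw monomials it proves a false statement. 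Two further gaps: you hedge about $\bM_\infty$ possibly being degenerate and propose projecting onto its range, but the paper shows $\lambda_{\min}(\bM_n)=\Omega(1)$ (Lemma~\ref{lem:M-pd}), a non-trivial fact whose proof again uses the Hermite expansion; and after solving the least-squares problem in the $\{\PolH_{\Tree}\}$ basis one still needs a change-of-basis argument back to $\{\Poly_{\Tree}\}$ (Lemma~\ref{lem:change-basis}), controlling the $L^2$ error between ``embeddings plus Hermite centering'' and ``non-reversing labelings plus plain monomials'', which is a genuine step rather than a bookkeeping identity.
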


\subsection{Proof of Proposition \ref{prop:tree-vs-poly}}
\label{sec:KeyPropo}

The direction ``$\ge$'' in Eq.~\eqref{eq:tree-vs-poly} is immediate from Eq.~\eqref{eq:poly-containment}, so it remains to prove ``$\le$.''

For the proof, we will consider a slightly different model
whereby we observe $\tbY= \btheta\btheta^{\sT}/\sqrt{n}+\tbZ$, with $\tbZ=\tbZ^{\sT}$
and $(\tZ_{ij})_{1 \le i\le j\le n}\sim_{iid}\cN(0,1)$.  In other words, we use Gaussians of variance $1$ instead of $2$ on the diagonal. The original model is related to this one by $\bY = \tbY +\bW$
where $\bW$ is a diagonal matrix with $(W_{ii})_{1 \le i\le n}\sim_{iid}\cN(0,1)$ independent of $\btheta,\tbZ$. It is easy to see that it is sufficient to prove Proposition~\ref{prop:tree-vs-poly}
under this modified model. Indeed, the left-hand side of Eq.~\eqref{eq:tree-vs-poly}
does not depend on the distribution of the diagonal entries of $\bY$ (since tree-structured polynomials do not depend on the diagonal entries). For the right-hand side, if $q(\bY)$ is an arbitrary degree-$D$ estimator
then $\tilde{q}(\tbY) = \E_{\bW}q(\tbY+\bW)$ also has degree $D$ and satisfies
$\EE[q(\bY) \cdot \psi(\theta_1)]=\EE[\tilde{q}(\tbY) \cdot \psi(\theta_1)]$
and by Jensen's inequality, $\EE[q(\bY)^2]\ge \EE[\tilde{q}(\tbY)^2]$, whence we conclude 
\begin{equation}
\inf_{\tilde{q} \in \RR[\tbY]_{\le D}} \EE[(\tilde{q}(\tbY) - \psi(\theta_1))^2] \le \inf_{q \in \RR[\bY]_{\le D}} \EE[(q(\bY) - \psi(\theta_1))^2]\, .
\end{equation}
This proves the claim. In what follows, we will drop the tilde from $\tbY$, $\tbZ$ and assume the new normalization.

It is useful to generalize the setting introduced previously.
Let $\cG_{\le D}$ denote the set of all rooted (multi-)graphs, up to root-preserving isomorphism, with at most $D$ total edges, with the additional constraint that no vertices are isolated except possibly the root. Self-loops and multi-edges are allowed, and edges are counted with their multiplicity. For instance,  a triple-edge contributes 3 towards the edge count $D$. The graph need not be connected. For $\Graph \in \cG_{\le D}$ we write $V(\Graph)$ for the set of vertices and $E(\Graph)$ for the multiset of edges.

We use $\circ$ for  the root of a graph $\Graph \in \cG_{\le D}$, and 
define labelings (rooted at 1) exactly as for trees. Instead of non-reversing labelings, it will be convenient to work with \emph{embeddings}, that is, labelings that are injective (every pair of distinct vertices $u,v \in V(\Graph)$ has $\phi(u) \ne \phi(v)$). We denote the set of embeddings of $\Graph$ by $\emb(\Graph)$.

A labeling $\phi$ of $\Graph\in \cG_{\le D}$ induces a multi-graph whose vertices are elements of $[n]$.
This is the graph with vertex set $
\{\phi(v) \, : \; v\in V(\Graph)\}$  and edge set $\{(\phi(u),\phi(v)) \, : \; (u,v)\in E(\Graph)\}$. If $\phi(u)= \phi(u')$ and $\phi(v)=\phi(v')$
for $(u,v)$, $(u',v')\in E(\Graph)$ distinct edges in the multigraph $\Graph$, then
$(\phi(u),\phi(v))$ and $(\phi(u'),\phi(v'))$ are considered distinct edges in the induced multi-graph.


We call this induced graph the \emph{image} of $\phi$ and write $\balpha = \sh(\phi;\Graph)$
whenever $\balpha$ is the image of $\phi$.
We will treat $\balpha$ as an element of $\NN^\Pairs$ where $\Pairs=\{(i,j) \,:\, \, 1 \le i \le j \le n\}$, namely $\balpha=(\alpha_{ij})_{1 \le i\le j\le n}$ where $\alpha_{ij}\in\NN=
 \{0,1,2,\ldots\}$ counts the multiplicity of edge $(i,j)$ in $\balpha$. Formally, $\alpha_{ij}:=|\{(u,v)\in E(\Graph) \, : \, \phi(\{u,v\}) = \{i,j\}\}|$.

For $k \in \NN$, let $h_k : \RR \to \RR$ denote the $k$-th orthonormal Hermite polynomial. Recall that these are defined (uniquely, up to sign) by the following two conditions: $(i)$ $h_k$ is a degree-$k$ polynomial; $(ii)$~$\E[h_j(Z) h_k(Z)] = \One_{j=k}$ when $Z \sim \normal(0,1)$. We refer to~\cite{orthog-poly} for background.
  
For $\balpha \in \NN^\Pairs$, define the multivariate Hermite polynomial
\begin{align}
h_\balpha(\bY) = \prod_{1 \le i \le j \le n} h_{\alpha_{ij}}(Y_{ij})\,. 
\end{align}
These polynomials are orthonormal: 
$\E[h_\balpha(\bZ) h_\bbeta(\bZ)] = \One_{\balpha=\bbeta}$ when $(Z_{ij})_{i \le j} \sim_{iid} \normal(0,1)$. 
Viewing $\balpha$ as a graph, let $C(\alpha)$ denote the set of non-empty 
(i.e., containing at least one edge) connected components of $\balpha$. As above, each 
$\bgamma \in C(\balpha)$ is an element of $\NN^\Pairs$ where $\gamma_{ij}$ denotes the 
multiplicity of edge $(i,j)$. It will be important to ``center'' each component in the following 
sense. Define
\begin{equation}\label{eq:H_alpha}
\PolH_\balpha(\bY) = \prod_{\bgamma \in C(\balpha)} (h_\bgamma(\bY) - \E h_\bgamma(\bY))\,,
\end{equation}
where (in the case $\balpha = \bzero$) the empty product is equal to 1 by convention.
 Here and throughout, expectation is over $\bY$ distributed according to the rank-one estimation model  as defined 
 in Eq.~\eqref{eq:ModelDef}.
  For $\Graph \in \cG_{\le D}$, define
\begin{equation}\label{eq:H_A}
\PolH_{\Graph}(\bY) = \frac{1}{\sqrt{|\emb(\Graph)|}} \sum_{\phi \in \emb(\Graph)} 
\PolH_{\sh(\phi;\Graph)}(\bY)\,.
\end{equation}
Define the \emph{symmetric subspace} $\RR[\bY]^\sym_{\le D} \subseteq \RR[\bY]_{\le D}$ 
as
\begin{align}
\RR[\bY]^\sym_{\le D} :=\Big\{
f(\bY)=\!\!\sum_{\Graph\in\cG_{\le D}}\! a_{\Graph} \PolH_{\Graph}(\bY) \; : \; a_{\Graph}\in\RR
\;\; \forall \Graph\in\cG_{\le D}\Big\}\, .
\end{align}
In words, $\RR[\bY]^\sym_{\le D}$ is the $\RR$-span of $(\PolH_\Graph)_{\Graph \in \cG_{\le D}}$.
It is also easy to see that $\RR[\bY]^\sym_{\le D}$ is the linear subspace of 
$\RR[\bY]_{\le D}$  which is invariant under permutations of rows/columns
$\{2,\dots,n\}$ of $\bY$.

Note that the task of estimating $\psi(\theta_1)$ under the model \eqref{eq:ModelDef} is invariant under permutations of $\{2,\dots, n\}$.
As a consequence of the Hunt--Stein theorem \cite{eaton2021charles}, the optimal estimator of $\psi(\theta_1)$
must be equivariant under permutations of $\{2,\dots,n\}$. 
The following  lemma shows that the same is true if we restrict our attention to Low-Deg estimators.
Namely, instead of the infimum over all degree-$D$ polynomials
 in~\eqref{eq:tree-vs-poly}, 
it suffices to study only the symmetric subspace.
\begin{lemma}\label{lem:q-sym}
Under the assumptions of Proposition \ref{prop:tree-vs-poly}, for any $n$,
\begin{equation}\label{eq:q-sym}
\inf_{q \in \RR[\bY]_{\le D}} \EE[(q(\bY) - \psi(\theta_1))^2] = 
\inf_{q \in \RR[\bY]^\sym_{\le D}} \EE[(q(\bY) - \psi(\theta_1))^2] \, .
\end{equation}
\end{lemma}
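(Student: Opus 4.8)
The plan is to prove the identity in Lemma~\ref{lem:q-sym} by a standard symmetrization (averaging) argument. Let $S_{n-1}$ denote the group of permutations of $\{2,\dots,n\}$, acting on $\bY$ by simultaneously permuting the corresponding rows and columns; write $\sigma \cdot \bY$ for this action. The direction ``$\le$'' is trivial since $\RR[\bY]^\sym_{\le D}\subseteq\RR[\bY]_{\le D}$. For ``$\ge$,'' given any $q\in\RR[\bY]_{\le D}$, I would define its symmetrization $\bar q(\bY) := \frac{1}{|S_{n-1}|}\sum_{\sigma\in S_{n-1}} q(\sigma\cdot\bY)$. The first task is to check that $\bar q$ lies in $\RR[\bY]^\sym_{\le D}$: it is clearly a polynomial of degree at most $D$ (each $q(\sigma\cdot\bY)$ is, and the degree is preserved under the linear change of variables induced by $\sigma$), and it is manifestly invariant under the $S_{n-1}$-action, hence it is in the symmetric subspace by the characterization stated in the excerpt (that $\RR[\bY]^\sym_{\le D}$ is exactly the $S_{n-1}$-invariant part of $\RR[\bY]_{\le D}$).

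The second task is to show that symmetrization does not increase the objective, i.e. $\EE[(\bar q(\bY)-\psi(\theta_1))^2] \le \EE[(q(\bY)-\psi(\theta_1))^2]$. The key observation is distributional invariance: since $(\theta_i)_{i\in[n]}\sim_{iid}\pi_\Theta$ and $\tbZ\sim\GOE$-like (exchangeable entries), for each fixed $\sigma\in S_{n-1}$ the pair $(\sigma\cdot\bY,\;\theta_1)$ has the same joint law as $(\bY,\theta_1)$ — permuting coordinates $\{2,\dots,n\}$ of $\btheta$ leaves its law invariant and leaves $\theta_1$ untouched, and the corresponding row/column permutation leaves the law of the noise invariant. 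Consequently $\EE[(q(\sigma\cdot\bY)-\psi(\theta_1))^2] = \EE[(q(\bY)-\psi(\theta_1))^2]$ for every $\sigma$. Then, writing $\bar q(\bY)-\psi(\theta_1) = \frac{1}{|S_{n-1}|}\sum_\sigma\big(q(\sigma\cdot\bY)-\psi(\theta_1)\big)$ and applying convexity of $x\mapsto x^2$ (Jensen's inequality over the uniform average on $S_{n-1}$), followed by the invariance just established, gives
\begin{align}
\EE[(\bar q(\bY)-\psi(\theta_1))^2] \le \frac{1}{|S_{n-1}|}\sum_{\sigma\in S_{n-1}}\EE[(q(\sigma\cdot\bY)-\psi(\theta_1))^2] = \EE[(q(\bY)-\psi(\theta_1))^2]\, .
\end{align}
Taking the infimum over $q\in\RR[\bY]_{\le D}$ on the right and noting $\bar q$ ranges within $\RR[\bY]^\sym_{\le D}$ yields ``$\ge$,'' completing the proof.

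I do not anticipate a genuine obstacle here; the only points requiring a little care are (a) verifying that the linear substitution $\bY\mapsto\sigma\cdot\bY$ genuinely preserves polynomial degree and maps $\RR[\bY]_{\le D}$ into itself (immediate, since it is a permutation of the variables $(Y_{ij})_{i\le j}$, up to relabeling unordered pairs), and (b) making precise that the model for estimating $\psi(\theta_1)$ is exchangeable in coordinates $\{2,\dots,n\}$ — which is exactly the remark made just before the lemma statement invoking the Hunt--Stein theorem, so I can cite that discussion. One could alternatively phrase the whole argument as an instance of Hunt--Stein / Rao--Blackwell for the compact group $S_{n-1}$, but the direct averaging computation above is self-contained and elementary, so that is the route I would take.
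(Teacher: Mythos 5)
Your proposal is correct and follows essentially the same route as the paper: symmetrize $q$ over the permutation group fixing index $1$, show $\bar q$ lands in $\RR[\bY]^\sym_{\le D}$, and use distributional invariance of $(\sigma\cdot\bY,\theta_1)$ plus Jensen's inequality to see that symmetrization does not increase the MSE. The only small difference is that the paper spells out (via Hermite expansions) why the $S_{n-1}$-invariant degree-$D$ polynomials coincide with $\mathrm{span}\{\PolH_\Graph\}$, whereas you cite that characterization from the surrounding text; otherwise the arguments are the same.
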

Our next step will be to write down an explicit formula (given in Lemma~\ref{lem:q-formula}
below) for the right-hand side of~\eqref{eq:q-sym}. Define the vector 
$\bc_n = (c_{n,A})_{A \in \cG_{\le D}}$ and matrix $\bM_{n} = (M_{n,AB})_{A,B \in \cG_{\le D}}$ by
\begin{align}
c_{n,A} = \E[\PolH_A(\bY) \cdot \psi(\theta_1)] \, ,\;\;\;\;\;
 M_{n,AB} = \E[\PolH_A(\bY) \PolH_B(\bY)] \,. \label{eq:cMDef}
 \end{align}
Note that both $\bc_n$ and $\bM_n$ have constant dimension (depending on $D$ but not $n$), but 
their entries depend on $n$. Also note that $\bM_n$ is a Gram matrix and therefore 
positive semidefinite.
 We will show that $\bM_n$ is strictly positive definite (and thus invertible), and in fact
  strongly so in the sense that its minimum eigenvalue is lower bounded by a positive constant 
  independent of $n$.
  (Here and throughout, asymptotic notation such as $O(\,\cdot\,)$ and $\Omega(\,\cdot\,)$ 
may hide factors depending on $\pi_{\Theta},\psi,D$.)
\begin{lemma}\label{lem:M-pd}
Under the assumptions of Proposition \ref{prop:tree-vs-poly},
\[ \lambda_{\mathrm{min}}(\bM_n) = \Omega(1) \,. \]
\end{lemma}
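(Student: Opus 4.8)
The plan is to show that the Gram matrix $\bM_n$ has minimum eigenvalue bounded below uniformly in $n$ by exhibiting, for each graph $A \in \cG_{\le D}$, a slightly different basis of "building blocks" with respect to which the Gram matrix is (asymptotically) nicely structured, and then controlling the change of basis. Concretely, I would first compute the asymptotics of the inner products $M_{n,AB} = \E[\PolH_A(\bY)\PolH_B(\bY)]$ as $n \to \infty$. The key structural fact to establish is that for an embedding $\phi \in \emb(A)$ and $\psi \in \emb(B)$, the quantity $\E[\PolH_{\sh(\phi;A)}(\bY)\PolH_{\sh(\psi;B)}(\bY)]$ is negligible unless the images $\sh(\phi;A)$ and $\sh(\psi;B)$ overlap in a very constrained way — because the Hermite polynomials $h_k(Y_{ij})$ are orthogonal under the pure-noise measure and the rank-one signal $\theta\theta^\sT/\sqrt n$ contributes only lower-order corrections per edge. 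After centering each connected component (the $h_\bgamma - \E h_\bgamma$ in~\eqref{eq:H_alpha}), one expects $\E[\PolH_A \PolH_B]$ to be dominated, for the right normalization $1/\sqrt{|\emb(A)|}$, by labelings $\phi, \psi$ whose images \emph{coincide as graphs} (matching edge multiplicities on matching vertex pairs), which forces $A$ and $B$ to be isomorphic. This suggests that $\bM_n \to \bM_\infty$ where $\bM_\infty$ is at worst block-diagonal (blocks indexed by some coarser equivalence than isomorphism — this is presumably Lemma~\ref{lem:M-lim}), so it suffices to lower-bound the minimum eigenvalue of each finite block.

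Next I would reduce the positive-definiteness of $\bM_n$ (or of the limiting blocks) to a statement about linear independence of the $\PolH_A$. Since $\bM_n$ is a Gram matrix, $\lambda_{\min}(\bM_n) = \Omega(1)$ is equivalent to: there is no unit vector $(a_A)$ with $\|\sum_A a_A \PolH_A\|_{L^2}^2 = o(1)$. I would prove this by a triangularity/dominant-term argument. Expand each $\PolH_A(\bY)$ in the orthonormal Hermite basis $\{h_\balpha\}_{\balpha \in \NN^\Pairs}$ of $L^2$ under the \emph{pure-noise} Gaussian measure. The leading contribution of $\PolH_A$ comes from the Hermite polynomials $h_\balpha$ where $\balpha$ ranges over images of embeddings of $A$; these all have exactly $|E(A)|$ edges (counted with multiplicity) and the same "shape" $A$. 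Crucially, the map $A \mapsto \{$shapes of images of embeddings of $A\}$ is such that distinct $A$ give disjoint (or at least triangularly-ordered) supports in the Hermite basis — an embedding of $A$ has image isomorphic to $A$, so $\{h_\balpha : \balpha \cong A\}$ and $\{h_\balpha : \balpha \cong B\}$ are disjoint families of orthonormal polynomials when $A \not\cong B$. For fixed $A$, the number of embeddings $\phi$ with a given image $\balpha$ is the number of automorphisms-compatible labelings, a constant, so the self-overlap $\E_{\text{noise}}[\PolH_A^2]$ is $\Theta(1)$ with the given normalization, and cross-terms $\E_{\text{noise}}[\PolH_A \PolH_B] = 0$ for $A \not\cong B$. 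This handles the pure-noise part; the signal part $\bY = \theta\theta^\sT/\sqrt n + \bZ$ perturbs this, and one must check the perturbation is $o(1)$ in operator norm — each factor of the signal costs a power of $n^{-1/2}$ against at most $n$ choices of a new vertex label, and the centering ensures no "free" component can be detached, so genuinely unbalanced contributions are suppressed.

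The main obstacle, I expect, is controlling the signal contribution precisely enough to conclude $\lambda_{\min}(\bM_n) \ge c > 0$ rather than merely $\lambda_{\min}(\bM_n) > 0$ for each fixed $n$. The combinatorics of which labelings of $A$ and $B$ produce non-negligible cross-correlations under the rank-one model — and verifying that after expanding $Y_{ij} = \theta_i\theta_j/\sqrt n + Z_{ij}$ and using $h_k(a+Z)$ expansions, all the surviving terms either (a) reproduce the pure-noise diagonal structure or (b) are genuinely $O(n^{-1/2})$ — is the delicate part. I would organize this by assigning to each pairing of a labeling of $A$ with a labeling of $B$ a combined multigraph on $[n]$, splitting edges into "noise edges" (which must be matched in pairs for a nonzero Gaussian moment) and "signal edges" (each carrying $n^{-1/2}$), and showing that any configuration with a nonzero net power of $n$ must in fact have nonnegative power that is zero only in the "diagonal" case; the centering in~\eqref{eq:H_alpha} is exactly what rules out the dangerous configurations where an entire connected component is pinned by signal edges to produce an unbounded $n$-power. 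Once the dominant diagonal block structure is in place with $\Theta(1)$ diagonal and $o(1)$ off-diagonal corrections, Weyl's inequality gives $\lambda_{\min}(\bM_n) = \Omega(1)$, since the number of blocks and their sizes are bounded by a constant depending only on $D$.

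Technically, I would phrase the final step as: write $\bM_n = \bD_n + \bE_n$ where $\bD_n$ is block-diagonal (one block per isomorphism class in $\cG_{\le D}$) with $\lambda_{\min}(\bD_n) \ge c_0 > 0$ for all large $n$ (this needs the pure-noise computation plus a check that the signal does not collapse the diagonal), and $\|\bE_n\|_{\mathrm{op}} = o(1)$; then $\lambda_{\min}(\bM_n) \ge c_0 - o(1) = \Omega(1)$. Verifying $\lambda_{\min}(\bD_n) \ge c_0$ within each block may itself require a small argument — e.g., that the vectors $\{\PolH_{\sh(\phi;A)} : \phi \in \emb(A)\}$, summed and normalized, retain a fixed fraction of their "pure-noise" norm — but since each block has bounded size this is a finite linear-algebra fact that one can make uniform in $n$ by taking limits along the computed asymptotics of $M_{n,AB}$.
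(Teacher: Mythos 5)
There is a genuine gap, and it sits exactly where you flagged the ``delicate part.'' Your plan rests on the claim that the signal contribution to $\bM_n$ is $o(1)$ in operator norm relative to a block-diagonal ``pure-noise'' matrix whose blocks are indexed by isomorphism class. That claim is false. The limiting matrix $\bM_\infty$ has $\Theta(1)$ off-diagonal entries even for non-isomorphic graphs: take $A$ a single edge hanging from the root and $B$ a path of length $2$ from the root. The dominant contribution to $M_{n,AB}$ comes from matching the one edge of $A$ against the root edge of $B$ as a noise pair, leaving the second edge of $B$ to carry a factor $n^{-1/2}$ from the signal; after summing over the $\Theta(n)$ choices of the extra vertex and dividing by the normalization, this is $\Theta(1)$. (There is also a second $\Theta(1)$ contribution in which no noise edges match and all three edges are signal.) So ``$\{h_\balpha : \balpha \cong A\}$ disjoint from $\{h_\balpha : \balpha \cong B\}$'' only controls the top-degree parts; the lower-degree, signal-induced overlaps persist at order one, and $\bM_n$ is not a small perturbation of a pure-noise Gram. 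Weyl's inequality therefore does not close the argument from the structure you describe.

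The paper handles this with a device that does not appear in your proposal. Writing $f = \sum_A b_A \PolH_A$ with $\|\bb(f)\|=1$, it applies Jensen's inequality in the form
\[
\E_\bY\!\left[f(\bY)^2\right] = \E_\bZ\,\E_\bX\!\left[f(\bX+\bZ)^2\right] \;\ge\; \E_\bZ\!\left[\bigl(\E_\bX f(\bX+\bZ)\bigr)^2\right] =: \|\hat g\|^2,
\]
where $g(\bZ) := \E_\bX f(\bX+\bZ)$, and then expands $g$ in the $\bZ$-Hermite basis. The crucial observation is that the map $\bb(f)\mapsto\bb(g)$ in the $\PolH_\Graph$ basis is \emph{unit upper-triangular} in the edge-count ordering (the $\bbeta=\balpha$ term has coefficient $1$, and strictly larger $\balpha$ contribute $O(1)$ entries); hence $\|\bzeta^{-1}\|_{\op}=O(1)$ and $\|\bb(g)\| = \Omega(1)$. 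This triangular coefficient map is the rigorous replacement for your ``disjoint leading supports'' intuition: it does not need the off-diagonal contributions to vanish, only that they appear above the diagonal. Without the Jensen projection and this triangular structure, your outline does not supply a way to control the signal-induced cross-terms, and the decomposition $\bM_n = \bD_n + \bE_n$ with $\|\bE_n\|_{\op}=o(1)$ cannot be made to hold.
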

\noindent We can now obtain an explicit formula for the optimal estimation error 
in terms of $\bM_n, \bc_n$.
\begin{lemma}\label{lem:q-formula}
Define the vector $\bc_n$ and matrix $\bM_n$ as per Eq.~\eqref{eq:cMDef}. Then, under
the assumptions of Proposition~\ref{prop:tree-vs-poly}, for any $n$,
\begin{align}
\inf_{q \in \RR[\bY]^\sym_{\le D}} \EE[(q(\bY) - \psi(\theta_1))^2] = \EE[\psi(\theta_1)^2] - \<\bc_n, \bM_n^{-1} \bc_n\>\,. 
\end{align}
Furthermore, the infimum is attained and the maximizer $q^*$, which is unique, takes the form
\begin{align}
q^*(\bY) = \sum_{A \in \cG_{\le D}} \hat{q}_A \PolH_A(\bY) \, ,\;\;\;\;\; \hbq =
(\hat{q}_A)_{A \in \cG_{\le D}}
= \bM_n^{-1} \bc_n\, .
\end{align}
\end{lemma}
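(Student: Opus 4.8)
The plan is to recognize the right-hand side of~\eqref{eq:q-sym} as a finite-dimensional least-squares problem and solve it via the normal equations. First I would parametrize: every $q\in\RR[\bY]^\sym_{\le D}$ is of the form $q(\bY)=\sum_{A\in\cG_{\le D}}a_A\PolH_A(\bY)$ for a coefficient vector $\ba=(a_A)_{A\in\cG_{\le D}}$, and since $\cG_{\le D}$ is finite and---by the standing moment assumptions---each $\PolH_A(\bY)$ and $\psi(\theta_1)$ lies in $L^2$, all the inner products below are finite. I would then expand
\begin{align*}
\EE\big[(q(\bY)-\psi(\theta_1))^2\big]
&=\EE[q(\bY)^2]-2\,\EE[q(\bY)\,\psi(\theta_1)]+\EE[\psi(\theta_1)^2]\\
&=\<\ba,\bM_n\ba\>-2\<\ba,\bc_n\>+\EE[\psi(\theta_1)^2]\,,
\end{align*}
using the definitions $M_{n,AB}=\EE[\PolH_A(\bY)\PolH_B(\bY)]$ and $c_{n,A}=\EE[\PolH_A(\bY)\cdot\psi(\theta_1)]$ from~\eqref{eq:cMDef}.

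Thus the problem reduces to minimizing the quadratic $\ba\mapsto\<\ba,\bM_n\ba\>-2\<\ba,\bc_n\>$ over all real coefficient vectors $\ba$. Here I would invoke Lemma~\ref{lem:M-pd}: since $\lambda_{\mathrm{min}}(\bM_n)=\Omega(1)>0$, the matrix $\bM_n$ is invertible and the quadratic form is strictly convex and coercive, so it has a unique minimizer, characterized by the stationarity condition $\bM_n\ba=\bc_n$, i.e.\ $\ba=\bM_n^{-1}\bc_n=:\hbq$. Substituting this back gives $\<\hbq,\bM_n\hbq\>-2\<\hbq,\bc_n\>=-\<\bc_n,\bM_n^{-1}\bc_n\>$, so the minimal value of $\EE[(q(\bY)-\psi(\theta_1))^2]$ equals $\EE[\psi(\theta_1)^2]-\<\bc_n,\bM_n^{-1}\bc_n\>$, which is the claimed formula. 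For the ``furthermore'' part: since $\bM_n$ is invertible, the linear map $\ba\mapsto\sum_A a_A\PolH_A$ is injective, so the minimizer over polynomials is likewise unique and equals $q^*(\bY)=\sum_{A\in\cG_{\le D}}\hat q_A\PolH_A(\bY)$ with $\hbq=\bM_n^{-1}\bc_n$. (The phrase ``maximizer $q^*$'' in the statement should read ``minimizer''; this is a typo.)

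I do not anticipate a genuine obstacle in this argument---it is the textbook orthogonal-projection computation. The only nontrivial inputs are external: (i)~the uniform positive-definiteness of $\bM_n$, which is precisely Lemma~\ref{lem:M-pd} and is established separately (it is what guarantees invertibility, uniqueness of the minimizer, and linear independence of the $\PolH_A$); and (ii)~the finiteness of $\EE[\PolH_A(\bY)^2]$ and $\EE[\PolH_A(\bY)\,\psi(\theta_1)]$, which follows because each $\PolH_A$ is a fixed polynomial in the entries of $\bY$---which have finite moments of all orders since $\pi_\Theta$ does and $\bZ$ is Gaussian---combined with the assumption that $\psi(\theta_1)$ has finite moments of all orders.
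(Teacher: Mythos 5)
Your proof is correct and matches the paper's own argument essentially step for step: expand $q=\sum_A a_A \PolH_A$, recognize the error as the quadratic $\<\ba,\bM_n\ba\>-2\<\ba,\bc_n\>+\EE[\psi(\theta_1)^2]$, and invoke Lemma~\ref{lem:M-pd} to get strict convexity and the unique stationary point $\bM_n^{-1}\bc_n$. Your observation that ``maximizer'' should read ``minimizer'' is also right (the paper's wording slips because its displayed computation passes through a $\sup$ of a concave functional, whose maximizer is the minimizer of the original MSE).
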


Determining the asymptotic error achieved by Low-Deg estimators requires understanding the asymptotic behavior of $\bc_n$ and $\bM_n$. This is achieved in the following lemmas. Notably, $\bM_n$ is nearly block diagonal. (For this block diagonal structure to appear, it is crucial that we center each connected component in~\eqref{eq:H_alpha}.)
\begin{lemma}\label{lem:c-lim}
For each $A \in \cG_{\le D}$ we have the following.
\begin{itemize}
\item There is a constant $c_{\infty,A} \in \RR$ (depending on $\pi_{\Theta}, \psi, A$) such that
 $c_{n,A} = c_{\infty,A} + O(n^{-1/2})$.
\item If $A \notin \cT_{\le D}$ then $c_{\infty,A} = 0$.
\end{itemize}
\end{lemma}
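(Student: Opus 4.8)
The plan is to compute $c_{n,A} = \E[\PolH_A(\bY)\cdot\psi(\theta_1)]$ by expanding over embeddings and taking the expectation with respect to the planted model $\bY = \btheta\btheta^{\sT}/\sqrt n + \bZ$ (with the modified normalization, so $(Z_{ij})_{i\le j}$ are i.i.d.\ $\normal(0,1)$). First I would recall the key single-variable identity: for $Y = \lambda + Z$ with $Z\sim\normal(0,1)$, one has $\E[h_k(Y)] = \lambda^k/\sqrt{k!}$. Applied edge-by-edge, for a fixed embedding $\phi\in\emb(A)$ with image $\balpha=\sh(\phi;A)$, each edge $(i,j)$ of the induced multigraph carries a $\normal(0,1)$ noise variable plus mean $\theta_i\theta_j/\sqrt n$; since $\phi$ is injective, distinct edges of $A$ map to distinct (and independent) noise coordinates, so the expectation factorizes over connected components of $\balpha$. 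For a connected component $\bgamma$ spanning a set $S$ of vertices with edge-multiplicities $\gamma_{ij}$, we get $\E h_\bgamma(\bY) = \prod_{(i,j)} (\theta_i\theta_j/\sqrt n)^{\gamma_{ij}}/\sqrt{\gamma_{ij}!}$, which equals $n^{-|E(\bgamma)|/2}\prod_{v\in S}\theta_v^{\deg_\bgamma(v)}\big/\prod_{(i,j)}\sqrt{\gamma_{ij}!}$.

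The centering in~\eqref{eq:H_alpha} replaces each $h_\bgamma(\bY)$ by $h_\bgamma(\bY)-\E h_\bgamma(\bY)$; the term $h_\bgamma(\bY)$ itself (before centering) has $\E[h_\bgamma(\bY)]$ as just computed, but $\PolH_\balpha(\bY)$ is a product of centered factors over the components. Taking the full expectation $\E[\PolH_\balpha(\bY)\cdot\psi(\theta_1)]$ and then summing over $\phi\in\emb(A)$ and dividing by $\sqrt{|\emb(A)|}$, I would: (i) observe that each centered factor $h_\bgamma-\E h_\bgamma$, once we further take expectation over $\btheta$, contributes a $\theta$-polynomial times a power of $n^{-1/2}$, and the leading behavior is governed by how many independent sums over vertex labels in $[n]\setminus\{1\}$ survive; (ii) count powers of $n$: $|\emb(A)| = n^{|V(A)|-1}(1+O(1/n))$, while each component contributes a vertex-sum factor $\sim n^{|V(\bgamma)|-[\circ\in\bgamma]}$ against a weight $n^{-|E(\bgamma)|/2}$. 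Balancing these, the surviving-to-order-$1$ contribution requires every connected component (other than the one containing the root, handled separately) to satisfy $|E(\bgamma)| = 2(|V(\bgamma)|-1)$ after accounting for the centering — wait, more carefully: centering forces each component to be "balanced" so that the expectation over the planted signal does not vanish while the vertex-counting exactly compensates. The upshot is that only when $A$ is a \emph{tree} (so that, as a graph, $|E(A)| = |V(A)|-1$ and moreover there are no multi-edges, self-loops, or cycles that would force extra powers of $n^{-1/2}$) does the leading term survive; in all other cases $c_{n,A}\to 0$, and in fact $c_{n,A} = O(n^{-1/2})$. For trees, the leading coefficient $c_{\infty,A}$ is an explicit finite expression in the moments of $\pi_\Theta$ and of $\psi(\theta_1)$, obtained by the standard "each vertex of degree $d$ contributes $\E[\Theta^d]$, the root contributes $\E[\psi(\Theta)\Theta^{\deg}]$" homomorphism-counting rule, divided by the normalization factors. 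The $O(n^{-1/2})$ error bound comes from the next-order terms in $|\emb(A)| = n^{|V(A)|-1} + O(n^{|V(A)|-2})$ and from subleading coincidence patterns, using that all moments of $\pi_\Theta$ and $\psi(\theta_1)$ are finite (so the relevant sums are bounded uniformly in $n$).

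The main obstacle is the careful bookkeeping of powers of $n$ in the presence of both the centering operation and general multigraph structure — in particular, making precise the claim that any self-loop, multi-edge, or cycle strictly reduces the power of $n$ so that non-tree graphs contribute $o(1)$ (indeed $O(n^{-1/2})$). I would handle this by the following counting argument: for a fixed $A$ and a fixed "coincidence pattern" (i.e., a partition of $V(A)$ recording which vertices of $A$ are mapped to equal labels — but note $\emb$ requires injectivity, so actually the coincidences enter only through the $O(n^{-1/2})$ correction to $|\emb(A)|$ and through the structure of $A$ itself), the power of $n$ from the product of $n^{-|E(\bgamma)|/2}$ over components, times $n^{(\text{number of free labels})}$, is maximized exactly when $A$ is a forest whose components are trees; since $A\in\cG_{\le D}$ has no isolated non-root vertices and connectivity is not required, one checks that a \emph{disconnected} $A$ that is a forest still gives a vanishing contribution unless it has a single component (the extra components each cost a net factor $n^{-1/2}$ because a tree component with $k$ vertices and $k-1$ edges away from the root gives $n^{k-1}\cdot n^{-(k-1)/2}\cdot(\text{centered, so the constant term is subtracted}) = $ actually the centering kills the would-be $n^{-(k-1)/2}\cdot n^{k-1}$ and leaves lower order) — this is exactly where centering is essential, and I expect this to be the step requiring the most care. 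Cross-referencing Lemma~\ref{lem:M-lim} (the companion computation for $\bM_n$) the same machinery applies, so I would set up a single lemma on "expected value of a centered component" and reuse it.
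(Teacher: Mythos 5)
Your overall plan is the same as the paper's: expand $c_{n,A}=\sqrt{|\emb(A)|}\,\E[\PolH_\balpha(\bY)\psi(\theta_1)]$ using the Hermite shift identity, factor the expectation over connected components of $\balpha$, and then count powers of $n$ via $|\emb(A)|\sim n^{|V(A)|-1}$ and the $n^{-|E|/2}$ weights, concluding $c_{n,A}\sim\const(A)\cdot n^{(|V(A)|-1-|E(A)|)/2}$ for connected $A$. That part of your argument is sound and matches the paper.

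The gap is in how you handle $A$ with a non-empty component not containing the root. You try to argue this case by power counting — and you flag it yourself as ``the step requiring the most care'' — but your sketch never closes it. The clean and essentially one-line observation (which the paper uses and you miss) is that the centered factor $h_\bgamma(\bY)-\E h_\bgamma(\bY)$ for any non-root component $\bgamma$ is \emph{independent} of $\psi(\theta_1)$ and of all the other centered factors, because $\balpha$ is the image of an injective labeling so the components are vertex-disjoint, the $\theta_i$ are i.i.d., and the $Z_{ij}$ are i.i.d.\ across distinct index pairs. Hence the expectation of the product factorizes, one factor is $\E[h_\bgamma-\E h_\bgamma]=0$, and the whole expression is \emph{exactly} zero for all $n$. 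This is strictly stronger than, and much simpler than, the bound you are aiming for; without it you would have to track cancellations between the leading and subleading terms in a centered product, and your ``the centering kills the would-be $\ldots$ and leaves lower order'' sentence does not actually show that those lower-order terms are $O(n^{-1/2})$ after multiplication by $\sqrt{|\emb(A)|}$. I'd recommend you isolate the independence observation as a standalone step, after which the only case left to compute is $A$ with a single non-empty component containing the root, where your power counting ($|V(A)|-1\le|E(A)|$ with strict inequality iff there is a cycle/self-loop/multi-edge) does complete the argument.
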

\begin{lemma}\label{lem:M-lim}
For each $A,B \in \cG_{\le D}$ we have the following.
\begin{itemize}
\item There is a constant $M_{\infty,AB} \in \RR$ (depending on $\pi_{\Theta}, A, B$) such that 
$M_{n,AB} = M_{\infty,AB} + O(n^{-1/2})$.
\item If $A \in \cT_{\le D}$ and $B \in \cG_{\le D} \setminus \cT_{\le D}$ then $M_{\infty,AB} = 0$.
\end{itemize}
\end{lemma}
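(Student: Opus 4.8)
The plan is to compute $M_{n,AB} = \E[\PolH_A(\bY)\PolH_B(\bY)]$ directly from the definitions~\eqref{eq:H_alpha}--\eqref{eq:H_A}, expand the product, and identify the dominant power of $n$. Writing $\PolH_A = |\emb(A)|^{-1/2}\sum_{\phi}\PolH_{\sh(\phi;A)}$ and similarly for $B$, we get a double sum over embeddings $\phi\in\emb(A)$, $\psi\in\emb(B)$ of terms $\E[\PolH_{\balpha}(\bY)\PolH_{\bbeta}(\bY)]$ where $\balpha = \sh(\phi;A)$, $\bbeta = \sh(\psi;B)$. Each of $\PolH_{\balpha}$, $\PolH_{\bbeta}$ is a product of centered connected pieces $h_{\bgamma}(\bY) - \E h_{\bgamma}(\bY)$, and under the model $\bY = \btheta\btheta^{\sT}/\sqrt n + \bZ$ we can condition on $\btheta$: given $\btheta$, $\bY$ has independent Gaussian entries with means $\theta_i\theta_j/\sqrt n$, so $\E[h_k(Y_{ij})\mid\btheta]$ is an explicit polynomial in $\theta_i\theta_j/\sqrt n$ of degree $k$ with lowest-order term $(\theta_i\theta_j/\sqrt n)^k/\sqrt{k!}$. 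Since $\theta_i\theta_j/\sqrt n$ is small, the expectation of a product of Hermite factors over the edges is, to leading order, governed by how the edge multiplicities pair up across the two graphs: an edge used with total multiplicity $2$ at the same location (once by $\balpha$, once by $\bbeta$) contributes an $O(1)$ orthonormality factor, while unmatched edges cost powers of $n^{-1/2}$ from the mean, and are then summed over $\sim n$ choices of label, giving an overall scaling determined by a ``Euler-characteristic''-type count of vertices minus edges.

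The key bookkeeping step is therefore: the leading contribution comes from pairs $(\phi,\psi)$ whose images $\balpha,\bbeta$ satisfy $\balpha = \bbeta$ as multigraphs on $[n]$ — forcing $A$ and $B$ to have the same underlying shape once we account for the free vertices — together with a counting argument (as in the companion Lemmas \ref{lem:c-lim} and the trace-method literature) showing that any surplus of vertices over edges is penalized. The centering $h_{\bgamma} - \E h_{\bgamma}$ is exactly what kills the terms where a connected component of $\balpha$ is ``left to itself'' (its contribution factors out as its own mean and cancels), and this is what produces the \emph{block} structure rather than merely a diagonal one: components can only contribute if they are matched with a component of the other graph. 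I would make this precise by grouping the connected components of $\balpha$ and $\bbeta$ into matched pairs and isolated singletons, showing the singletons vanish by centering, and reading off the power of $n$ for the matched part.

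For the second bullet — $M_{\infty,AB}=0$ when $A\in\cT_{\le D}$ and $B\notin\cT_{\le D}$ — the argument is that to get a nonvanishing limit the matched components of $\balpha$ and $\bbeta$ must be isomorphic as graphs (with the label-$1$ root respected), and in the $O(1)$ regime the matching must moreover be an essentially perfect overlay, so the union graph on $[n]$ has as many vertices as a disjoint-free configuration allows; a short combinatorial lemma then shows that if this union (hence each of $A$, $B$ after identifying the overlaid part) is a forest, then $B$ must have been a forest too, i.e. $B\in\cT_{\le D}$ (here one uses that trees are precisely the connected graphs with $\#\text{edges} = \#\text{vertices}-1$, so any extra edge in $B$ forces a cycle and an extra power of $n^{-1/2}$ that is not compensated). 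I expect the main obstacle to be the precise accounting of the power of $n$ when components of $\balpha$ and $\bbeta$ overlap only partially: one must carefully track, across the double sum over embeddings, the exponent $\#\{\text{free vertices}\} - \tfrac12\#\{\text{unmatched edge-slots}\}$ and verify it is strictly negative unless the configuration is the ``aligned forest'' one — this is the combinatorial heart of the lemma and where the connectedness of $A$'s components and the tree condition enter. The remaining steps (uniform moment bounds to justify the expansions, using sub-Gaussianity or finite moments of $\pi_\Theta$, and convergence of each surviving coefficient to its limit) are routine given the analogous estimates already invoked for $\bc_n$.
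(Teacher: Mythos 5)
Your high-level strategy (expand over embeddings of both graphs, use the Hermite expansion in powers of $\bX = \btheta\btheta^\sT/\sqrt n$ to extract the leading power of $n$, observe that centering kills ``isolated'' components, and do Euler-characteristic bookkeeping) matches the paper's proof. However, there are two substantive errors in your bookkeeping step, and they are not cosmetic --- they concern exactly the combinatorial heart that you correctly identify as ``where the connectedness\ldots\ and the tree condition enter.''

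First, you assert that ``the leading contribution comes from pairs $(\phi,\psi)$ whose images $\balpha,\bbeta$ satisfy $\balpha = \bbeta$ as multigraphs on $[n]$ --- forcing $A$ and $B$ to have the same underlying shape.'' This is false, and if it were true it would make $\bM_\infty$ diagonal rather than merely block diagonal. Take $A$ the single-edge rooted tree and $B$ the rooted path of length two; take $\phi,\psi$ with images $\balpha = \{1a\}$ and $\bbeta = \{1b,bc\}$ with $a,b,c$ distinct. Then $\E[\PolH_\balpha\PolH_\bbeta] = \Cov(X_{1a},\,X_{1b}X_{bc}) = \Theta(n^{-3/2})$ is nonzero whenever $\Var(\Theta)>0$, and summing over the $\Theta(n^3)$ such triples and dividing by $\sqrt{|\emb(A)||\emb(B)|}=\Theta(n^{3/2})$ gives a nonzero limit $M_{\infty,AB}\ne 0$. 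So $\balpha=\bbeta$ is not the extremal case. The paper's precise characterization of the order-one configurations is: every nonempty connected component of $\balpha\setminus\bbeta$ is a simple tree that touches $V(\bbeta)$ at exactly one vertex, and symmetrically for $\bbeta\setminus\balpha$. This allows trees to hang off the shared skeleton, which is exactly what happens in the example above, and is essential because off-diagonal entries within the tree block of $\bM_\infty$ are generically nonzero (and indeed have to be, since $\bP_\infty$ enters the algorithm via $\hbr = \bP_\infty^{-1}\bd_\infty$).

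Second, your argument for $M_{\infty,AB}=0$ when $A\in\cT_{\le D}$ and $B\notin\cT_{\le D}$ only addresses the case where $B$ ``has an extra edge forcing a cycle.'' But $\cG_{\le D}\setminus\cT_{\le D}$ also contains graphs whose root is isolated, and disconnected (multi-component) graphs, and in those cases there is no cycle to point to. The paper handles these with three separate combinatorial arguments: root isolated in $B$ (then a component of $\balpha\setminus\bbeta$ through the root must reach two vertices of $V(\bbeta)$, which is forbidden at order one); $B$ disconnected with non-isolated root (then the unique path in the tree $\balpha$ from a non-root component of $\bbeta$ back to the root must cross $\balpha\setminus\bbeta$ and link two vertices of $V(\bbeta)$); and $B$ containing a cycle (your case). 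Without the first two cases, the proof of the second bullet is incomplete.

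Both errors trace back to the same source: you compressed the combinatorial step into ``matched pairs vs.\ singletons'' and ``$\balpha=\bbeta$ vs.\ discrepancy costs $n^{-1/2}$,'' when the actual inequality $\varphi(A,B)\le 0$ (with $\varphi$ essentially $|V(\balpha)\setminus V(\bbeta)|+|V(\bbeta)\setminus V(\balpha)|-|\balpha\setminus\bbeta|-|\bbeta\setminus\balpha|$ maximized over intersection patterns) needs to be established by a per-component argument on the excess graphs, and then the strictness $\varphi\le -\tfrac12$ for a tree against a non-tree needs a case analysis on \emph{why} $B$ fails to be a tree.
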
 
Write $\bc_n, \bM_n$ in block form, where the first block is indexed by rooted trees 
$\cT_{\le D}$ and the second by other graphs $\cG_{\le D} \setminus \cT_{\le D}$:
\begin{equation}\label{eq:block-form}
\bc_n = \left[\begin{array}{c} \bd_n \\ \be_n \end{array}\right] \qquad\qquad
 \bM_n = \left[\begin{array}{cc} \bP_n & \bR_n \\ \bR_n^\sT & \bQ_n \end{array}\right].
\end{equation}
Let $\bd_{\infty}, \be_{\infty}, \bP_{\infty}, \bQ_{\infty}, \bR_{\infty}$ denote 
the corresponding limiting vectors/matrices from 
Lemmas~\ref{lem:c-lim} and~\ref{lem:M-lim}, and note that $\be_{\infty} = \bzero$ and $\bR_{\infty} = \bzero$.

We now work towards constructing the (sequence of) tree-structured polynomials $p = p_n$ that verify Eq.~\eqref{eq:tree-vs-poly}. We 
begin by defining a related polynomial $r = r_n$, which is tree-structured in the basis $\{\PolH_{\Tree}\}$ instead of $\{\Poly_{\Tree}\}$:
\begin{equation}\label{eq:r-def}
r(\bY) = \sum_{T \in \cT_{\le D}} \hat{r}_T \PolH_T(\bY) \qquad\text{where}\qquad \hbr = 
\bP_{\infty}^{-1} \bd_{\infty}\, .
\end{equation}
To see that this definition is well-posed, note that since $\bM_{\infty}$ is strictly positive definite by Lemma~\ref{lem:M-pd}, $\bP_{\infty}$ is also strictly positive definite and thus invertible.
For intuition, note the similarity between $\hbr$ and the optimizer $\hbq$ in 
Lemma~\ref{lem:q-formula}.

The next lemma characterizes the asymptotic estimation error achieved by the polynomial $r(\bY)$.
\begin{lemma}\label{lem:r-success}
We have
\begin{align}
\lim_{n \to \infty} \EE[r(\bY) \cdot \psi(\theta_1)] = \lim_{n \to \infty} \EE[r(\bY)^2] = 
\<\bd_{\infty}, \bP_{\infty}^{-1} \bd_{\infty}\> = \<\bc_{\infty}, \bM_{\infty}^{-1} \bc_{\infty}\>\, ,
\end{align}
and in particular,
\begin{align}
\lim_{n \to \infty} \EE[(r(\bY)-\psi(\theta_1))^2] = \EE[\psi(\theta_1)^2] - \langle \bc_\infty, \bM_\infty^{-1} \bc_\infty \rangle \, .
\end{align}
\end{lemma}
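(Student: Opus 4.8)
The plan is to exploit the block structure established in Lemmas~\ref{lem:c-lim} and~\ref{lem:M-lim}, together with the quantitative convergence rates $O(n^{-1/2})$ they provide, to pass to the limit inside the relevant bilinear forms. First I would observe that, since $r = r_n$ has a \emph{fixed} ($n$-independent) coefficient vector $\hbr = \bP_\infty^{-1}\bd_\infty$ supported on the tree block $\cT_{\le D}$, the quantities $\EE[r(\bY)\cdot\psi(\theta_1)]$ and $\EE[r(\bY)^2]$ are exactly $\langle \hbr, \bd_n\rangle$ and $\langle \hbr, \bP_n \hbr\rangle$ respectively, by the definitions~\eqref{eq:cMDef} of $\bc_n, \bM_n$ restricted to the tree indices. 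Because $\hbr$ is a fixed vector of constant dimension and $\bd_n \to \bd_\infty$, $\bP_n \to \bP_\infty$ entrywise (Lemmas~\ref{lem:c-lim},~\ref{lem:M-lim}), both limits exist and equal $\langle \hbr, \bd_\infty\rangle$ and $\langle \hbr, \bP_\infty \hbr\rangle$.

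Next I would simplify these using $\hbr = \bP_\infty^{-1}\bd_\infty$: the first is $\langle \bP_\infty^{-1}\bd_\infty, \bd_\infty\rangle = \langle \bd_\infty, \bP_\infty^{-1}\bd_\infty\rangle$, and the second is $\langle \bP_\infty^{-1}\bd_\infty, \bP_\infty \bP_\infty^{-1}\bd_\infty\rangle = \langle \bd_\infty, \bP_\infty^{-1}\bd_\infty\rangle$ as well, so the two agree. This uses invertibility of $\bP_\infty$, which follows from Lemma~\ref{lem:M-pd}: $\lambda_{\min}(\bM_\infty) = \lim \lambda_{\min}(\bM_n) = \Omega(1) > 0$, and $\bP_\infty$ is a principal submatrix of the positive definite matrix $\bM_\infty$, hence also positive definite. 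Then I would identify $\langle \bd_\infty, \bP_\infty^{-1}\bd_\infty\rangle$ with $\langle \bc_\infty, \bM_\infty^{-1}\bc_\infty\rangle$ using the block decompositions~\eqref{eq:block-form} together with $\be_\infty = \bzero$ and $\bR_\infty = \bzero$: when $\bM_\infty$ is block-diagonal, $\bM_\infty^{-1} = \operatorname{diag}(\bP_\infty^{-1}, \bQ_\infty^{-1})$, and since $\bc_\infty = (\bd_\infty, \bzero)$, the quadratic form collapses to the tree block, giving $\langle \bc_\infty, \bM_\infty^{-1}\bc_\infty\rangle = \langle \bd_\infty, \bP_\infty^{-1}\bd_\infty\rangle$. (One should note $\bQ_\infty$ need not be invertible, but this is irrelevant since $\be_\infty = \bzero$ kills that block; to be safe, one can instead argue directly that $\bM_\infty (\bP_\infty^{-1}\bd_\infty, \bzero)^\sT = (\bd_\infty, \bzero)^\sT = \bc_\infty$, so $\bM_\infty^{-1}\bc_\infty = (\bP_\infty^{-1}\bd_\infty, \bzero)^\sT$ and the claim follows.)

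Finally, the displayed consequence for $\EE[(r(\bY)-\psi(\theta_1))^2]$ is just the expansion $\EE[(r-\psi(\theta_1))^2] = \EE[r^2] - 2\EE[r\cdot\psi(\theta_1)] + \EE[\psi(\theta_1)^2]$; passing to the limit and substituting the two equal limits computed above yields $\EE[\psi(\theta_1)^2] - \langle\bc_\infty, \bM_\infty^{-1}\bc_\infty\rangle$. I do not anticipate a serious obstacle here: the content of the lemma is essentially bookkeeping on top of Lemmas~\ref{lem:c-lim}--\ref{lem:M-pd}. The only mild subtlety is making sure the interchange of limit and expectation (i.e., that $\EE[r(\bY)^2]$ really converges, not just the formal bilinear form) is legitimate — but since $r_n$ is a finite linear combination, with fixed coefficients, of the basis elements $\PolH_T$, and $\EE[\PolH_A \PolH_B] = M_{n,AB}$ converges by Lemma~\ref{lem:M-lim}, there is no issue; no uniform integrability argument beyond the finiteness of moments already assumed in Proposition~\ref{prop:tree-vs-poly} is needed.
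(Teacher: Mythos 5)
Your proposal is correct and follows essentially the same route as the paper's proof: express $\EE[r\cdot\psi(\theta_1)]=\langle\hbr,\bd_n\rangle$ and $\EE[r^2]=\langle\hbr,\bP_n\hbr\rangle$, pass to the limit using Lemmas~\ref{lem:c-lim} and~\ref{lem:M-lim}, and collapse $\langle\bc_\infty,\bM_\infty^{-1}\bc_\infty\rangle$ to $\langle\bd_\infty,\bP_\infty^{-1}\bd_\infty\rangle$ via the block-diagonal structure with $\be_\infty=\bzero$, $\bR_\infty=\bzero$. The side remark about $\bQ_\infty$ is unnecessary (positive definiteness of $\bM_\infty$ from Lemma~\ref{lem:M-pd} already gives it), but harmless.
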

\noindent The crucial equality $\<\bd_{\infty}, \bP_{\infty}^{-1} \bd_{\infty}\> = \<\bc_{\infty}, \bM_{\infty}^{-1} 
\bc_{\infty}\>$ above
is an immediate consequence of the structure of $\bc_{\infty}, \bM_{\infty}$
established in Lemmas~\ref{lem:c-lim} and \ref{lem:M-lim}, namely $\be_{\infty} = \bzero$ and $\bR_{\infty} = \bzero$.

As a direct consequence of Lemma \ref{lem:r-success}, we can now prove the following analogue of Proposition~\ref{prop:tree-vs-poly} where  the basis $\{\PolH_{\Tree}\}$ is used instead of $\{\Poly_{\Tree}\}$.
which was left out by Lemma \ref{lem:r-success}.
\begin{proposition}\label{prop:tree-vs-poly-emb}
Assume $\pi_{\Theta}$ to have finite moments of all orders.
Let $\psi:\reals\to \reals$ be a measurable function, and assume all moments
of $\psi(\theta_1)$ to be finite. 
For any fixed $\pi_{\Theta}, \psi, D$ there exists a fixed ($n$-independent) choice of coefficients $(\hat{r}_{\Tree})_{\Tree \in \cT_{\le D}}$ such that the associated polynomial $r = r_n$ defined by 
 $r(\bY) = \sum_{\Tree \in \cT_{\le D}} \hat{r}_\Tree \PolH_{\Tree}(\bY)$ satisfies
\begin{equation}\label{eq:tree-vs-poly-emb}
\lim_{n \to \infty} \EE[(r(\bY) - \psi(\theta_1))^2] = \lim_{n \to \infty} \inf_{q \in \RR[\bY]_{\le D}} \EE[(q(\bY) - \psi(\theta_1))^2] \, .
\end{equation}
(In particular, the above limits exist.)
\end{proposition}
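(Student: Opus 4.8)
The plan is to combine Lemma~\ref{lem:r-success} with Lemmas~\ref{lem:q-formula} and~\ref{lem:M-pd} to close the gap between the error achieved by the polynomial $r$ and the infimum over all of $\RR[\bY]_{\le D}$. First I would invoke Lemma~\ref{lem:q-sym} to replace the infimum over $\RR[\bY]_{\le D}$ by the infimum over the symmetric subspace $\RR[\bY]^\sym_{\le D}$, and then Lemma~\ref{lem:q-formula} to rewrite that infimum (for each fixed $n$) as $\EE[\psi(\theta_1)^2] - \langle \bc_n, \bM_n^{-1} \bc_n\rangle$. So the right-hand side of~\eqref{eq:tree-vs-poly-emb} equals $\EE[\psi(\theta_1)^2] - \lim_{n\to\infty}\langle \bc_n, \bM_n^{-1}\bc_n\rangle$, provided this last limit exists.

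The key step is then to show $\langle \bc_n, \bM_n^{-1} \bc_n\rangle \to \langle \bc_\infty, \bM_\infty^{-1}\bc_\infty\rangle$. This follows from continuity of the map $(\bc,\bM) \mapsto \langle \bc, \bM^{-1}\bc\rangle$ on the region where $\bM$ is positive definite: by Lemmas~\ref{lem:c-lim} and~\ref{lem:M-lim} we have $\bc_n \to \bc_\infty$ and $\bM_n \to \bM_\infty$ entrywise (both at rate $O(n^{-1/2})$, though mere convergence suffices here), and by Lemma~\ref{lem:M-pd} the matrices $\bM_n$ have minimum eigenvalue bounded below by a constant independent of $n$, so that $\bM_n^{-1}$ is uniformly bounded and $\bM_n^{-1} \to \bM_\infty^{-1}$. (Note $\bM_\infty$ is positive definite as the limit of such $\bM_n$, so $\bM_\infty^{-1}$ is well-defined; this was already observed after~\eqref{eq:r-def}.) Hence $\lim_{n\to\infty}\inf_{q\in\RR[\bY]_{\le D}}\EE[(q(\bY)-\psi(\theta_1))^2] = \EE[\psi(\theta_1)^2] - \langle\bc_\infty,\bM_\infty^{-1}\bc_\infty\rangle$, and in particular the limit exists.

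Finally, I would compare this with Lemma~\ref{lem:r-success}, which gives exactly $\lim_{n\to\infty}\EE[(r(\bY)-\psi(\theta_1))^2] = \EE[\psi(\theta_1)^2] - \langle\bc_\infty,\bM_\infty^{-1}\bc_\infty\rangle$ for the tree-structured polynomial $r = r_n$ with $\hbr = \bP_\infty^{-1}\bd_\infty$ (the coefficients $(\hat r_\Tree)_{\Tree\in\cT_{\le D}}$ being fixed and $n$-independent, as required). The two displays coincide, proving~\eqref{eq:tree-vs-poly-emb}. I do not anticipate a serious obstacle here: the entire content has been front-loaded into the five lemmas, and what remains is the elementary linear-algebra observation that a convergent sequence of uniformly-well-conditioned quadratic forms converges to the quadratic form of the limit. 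The only point requiring a sentence of care is justifying that $\bM_\infty$ inherits strict positive definiteness (hence invertibility) from the uniform lower bound in Lemma~\ref{lem:M-pd}, which also retroactively confirms that $\bP_\infty$ — a principal submatrix of $\bM_\infty$ — is invertible so that $r$ is well-defined.
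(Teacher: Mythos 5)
Your proposal matches the paper's proof essentially verbatim: both invoke Lemmas~\ref{lem:q-sym} and~\ref{lem:q-formula} to rewrite the right-hand side as $\EE[\psi(\theta_1)^2] - \langle\bc_n,\bM_n^{-1}\bc_n\rangle$, use Lemmas~\ref{lem:c-lim}, \ref{lem:M-lim}, \ref{lem:M-pd} to pass to the limit (the paper outsources this small continuity argument to Lemma~\ref{lem:limit-exists}), and then conclude via Lemma~\ref{lem:r-success}. No discrepancies; correct.
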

\begin{proof}[Proof of Proposition~\ref{prop:tree-vs-poly-emb}]
It follows immediately from Lemmas~\ref{lem:M-pd},~\ref{lem:c-lim}, and~\ref{lem:M-lim} that
\[ \lim_{n \to \infty} \<\bc_n, \bM_n^{-1} \bc_n\> = \<\bc_{\infty}, \bM_{\infty}^{-1} \bc_\infty\> \, . \]
(See Lemma \ref{lem:limit-exists} in the appendix for a formal proof.)
Combining this fact with Lemmas~\ref{lem:q-sym} and \ref{lem:q-formula},
the limit on the right-hand side of~\eqref{eq:tree-vs-poly-emb} exists and is equal to 
$\EE[\psi(\theta_1)^2] - \<\bc_{\infty}, \bM_{\infty}^{-1} \bc_{\infty}\>$.


Defining $\hbr$ as in~\eqref{eq:r-def}, we have by Lemma~\ref{lem:r-success} that the limit on the left-hand side 
of~\eqref{eq:tree-vs-poly-emb} is also equal to $\EE[\psi(\theta_1)^2] - \<\bc_{\infty}, \bM_{\infty}^{-1} \bc_{\infty}\>$. This completes the proof.
\end{proof}

Proposition \ref{prop:tree-vs-poly-emb} differs from our goal, namely Proposition~\ref{prop:tree-vs-poly}, in that it offers an estimator that is a linear combination
of the polynomials $\{\PolH_\Tree\}_{\Tree\in\cT_{\le D}}$ instead of $\{\Poly_{\Tree}\}_{\Tree\in\cT_{\le D}}$. Recalling that we are restricting to trees and that the first two Hermite polynomials are simply $h_0(z) = 1$ and $h_1(z) = z$, the difference lies in the fact that $\PolH_{\Tree}(\bY)$ involves a sum over embeddings, while $\Poly_{\Tree}(\bY)$ involves a sum over the larger class of non-reversing labelings; also $\PolH_\Tree(\bY)$ is centered by its expectation, provided $\Tree$ is not edgeless (see Eq.~\eqref{eq:H_alpha} and note that a tree has only one connected component).

The next lemma allows us to, for each $A \in \cT_{\le D}$, rewrite $\PolH_A(\bY)$ as a linear combination
of the $\{\Poly_{\Tree}(\bY)\}_{\Tree \in \cT_{\le D}}$ with
negligible error.
\begin{lemma}\label{lem:change-basis}
For any fixed $A \in \cT_{\le D}$ there exist $n$-independent coefficients 
$(m_{AB})_{B \in \cT_{\le D}}$ such that
\begin{align}
 \PolH_A(\bY) = \sum_{B \in \cT_{\le D}} m_{AB} \Poly_B(\bY) + \PolErr_A(\bY) \, ,\;\;\;\;\;
 \EE[\PolErr_A(\bY)^2] = o_n(1) \, .
\end{align}
\end{lemma}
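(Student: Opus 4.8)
The plan is to unfold $\PolH_A$ and $\Poly_A$ into sums over labelings of $A$, isolate the three sources of discrepancy (normalization, centering, and the presence of non-injective labelings), and control each by graph-moment estimates of the kind used to prove Lemmas~\ref{lem:c-lim}--\ref{lem:M-lim}, together with an induction on the number of edges of $A$. Fix $A\in\cT_{\le D}$ with $k:=|E(A)|$, so $|V(A)|=k+1$; the case $A=B_0$ (no edges) is trivial since $\PolH_{B_0}=\Poly_{B_0}=1$, so take $k\ge 1$. Since $A$ is connected, an embedding $\phi\in\emb(A)$ has image isomorphic to $A$ as a \emph{simple} graph, and (the single connected component of $\sh(\phi;A)$ being $\sh(\phi;A)$ itself) $\PolH_{\sh(\phi;A)}(\bY)=w_\phi(\bY)-\EE\,w_\phi(\bY)$, where $w_\phi(\bY):=\prod_{(u,v)\in E(A)}Y_{\phi(u)\phi(v)}$. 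Writing $S_A:=\sum_{\phi\in\emb(A)}w_\phi$ and $\Poly_A^{\ne}:=\sum_{\phi\in\nr(A)\setminus\emb(A)}w_\phi$, and using $\emb(A)\subseteq\nr(A)$, one obtains the exact identity
\begin{equation*}
\PolH_A(\bY)\;=\;\sqrt{\frac{|\nr(A)|}{|\emb(A)|}}\;\Poly_A(\bY)\;-\;\frac{\Poly_A^{\ne}(\bY)}{\sqrt{|\emb(A)|}}\;-\;\frac{\EE\,S_A}{\sqrt{|\emb(A)|}}\, .
\end{equation*}
It thus suffices to show that each of the last two terms equals, up to an $L^2$-negligible error, a fixed ($n$-independent) linear combination of $\{\Poly_B\}_{B\in\cT_{\le D}}$.

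The normalization and centering terms are routine. Counting labelings gives $|\emb(A)|=\Theta(n^k)$ and $|\nr(A)|-|\emb(A)|=O(n^{k-1})$ (a non-injective labeling uses at most $k$ distinct labels), so $\sqrt{|\nr(A)|/|\emb(A)|}=1+O(1/n)$; combined with the bound $\EE[\Poly_A(\bY)^2]=O(1)$ (a second-moment graph count in which the dominant contribution comes from pairs of embeddings agreeing only on the root, using that a connected multigraph on $v$ vertices with total edge-multiplicity $m$ has at least $2(v-1)-m$ odd-multiplicity edges, each costing a factor $n^{-1/2}$), this yields $\sqrt{|\nr(A)|/|\emb(A)|}\,\Poly_A=\Poly_A+o_n(1)$ in $L^2$. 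For the centering term, $\EE\,w_\phi=n^{-k/2}\prod_{v\in V(A)}\EE[\Theta^{\deg_A(v)}]=:n^{-k/2}\mu_A$ for every embedding $\phi$ (distinct pairs carry independent mean-zero noise), whence $\EE\,S_A/\sqrt{|\emb(A)|}=\mu_A(1+O(1/n))=\mu_A\,\Poly_{B_0}+o_n(1)$, a constant tending to zero being $o_n(1)$ in $L^2$.

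The heart of the proof is the term $\Poly_A^{\ne}/\sqrt{|\emb(A)|}$, which I would analyze by a ``collapse'' argument plus induction on $k$. Group $\Poly_A^{\ne}=\sum_\phi w_\phi$ by the isomorphism type of the image multigraph $\balpha=\sh(\phi;A)$: since $\phi$ is non-reversing, $\balpha$ has no self-loop; since $\phi$ is non-injective, $\balpha$ has $v\le k$ vertices, while its total edge-multiplicity equals $k$. By the inequality above, $\balpha$ has at least $2(v-1)-k$ odd-multiplicity edges, and expanding $w_\phi=\prod_{i\le j}Y_{ij}^{\alpha_{ij}}$ in the Hermite basis, each odd-multiplicity edge costs $n^{-1/2}$; hence every shape contributes at most $O\!\big(n^{(v-1)-(2(v-1)-k)/2}\big)=O(n^{k/2})=O(\sqrt{|\emb(A)|})$, with equality exactly when the underlying simple graph of $\balpha$ is a tree $B$ (so with $v-1\le k-1<k$ edges), the odd-multiplicity edges have multiplicity $1$, and the remaining edges have multiplicity $2$. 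For such a shape the leading-order contribution comes from the constant Hermite component of $Y_{ij}^2$ on the doubled edges and equals a convergent scalar times the embedding-sum $S_B$ of the skeleton tree $B$, while the $h_{\ge 2}$-contributions and all non-extremal shapes are $o_n(1)$ after dividing by $\sqrt{|\emb(A)|}$. Since $S_B=\sqrt{|\nr(B)|}\,\Poly_B-\Poly_B^{\ne}$ and $|E(B)|<k$, recursively reducing the $\Poly_B^{\ne}$ (this terminates because the edge count strictly decreases, the base case $B=B_0$ being trivial) expresses $\Poly_A^{\ne}/\sqrt{|\emb(A)|}$ as a fixed linear combination of $\{\Poly_B\}_{|E(B)|<k}$ plus an $o_n(1)$ $L^2$-error. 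Combining the three pieces gives $\PolH_A=\Poly_A-\mu_A\Poly_{B_0}-\sum_{|E(B)|<k}(\cdots)\Poly_B+\PolErr_A$ with $\EE[\PolErr_A(\bY)^2]=o_n(1)$; reading off the coefficients ($m_{AA}=1$, the others absorbing the collapse and centering contributions) proves the lemma.

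The main obstacle is the collapse step: pinning down exactly which induced-multigraph shapes are extremal (tree-skeleton with all non-skeleton edges doubled), checking that the leading coefficient of each extremal shape converges to an $n$-independent limit, and---most importantly---showing that everything \emph{not} of this form (the $h_{\ge 2}$ Hermite pieces, cyclic or higher-multiplicity shapes, and sub-leading shapes) is genuinely $o_n(1)$ in $L^2$ after dividing by $\sqrt{|\emb(A)|}$. This is the same second-moment / graph-homomorphism bookkeeping that underlies Lemmas~\ref{lem:c-lim} and~\ref{lem:M-lim}, which I would reuse; the normalization and centering estimates and the finite induction on the edge count are routine.
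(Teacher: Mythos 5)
Your proposal matches the paper's proof in all essential respects: the decomposition of $\PolH_A$ into a normalization error, a centering term, and a sum over non-injective non-reversing labelings; the observation that the dominant non-injective contribution comes from labelings whose image is a tree skeleton with doubled edges; the reduction of that contribution to (a scalar multiple of) the embedding sum of the smaller skeleton tree; and the finite induction on $|E(A)|$. You also correctly flag that the rigorous content is the $L^2$ bookkeeping of the sub-leading shapes, which the paper carries out in its analogues of $\PolErr_{A,2}$ and $\PolErr_{A,3}$ via the same pattern/moment counting used for Lemmas~\ref{lem:c-lim} and~\ref{lem:M-lim}; this is the same route the paper takes.
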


\noindent We can finally conclude the proof of Proposition~\ref{prop:tree-vs-poly}.
\begin{proof}[Proof of Proposition~\ref{prop:tree-vs-poly}]
As in the proof of Proposition~\ref{prop:tree-vs-poly-emb}, it suffices to define $\hbp$ so that the limit on the
 left-hand side of~\eqref{eq:tree-vs-poly} is equal to $\EE[\psi(\theta_1)^2] - \<\bc_{\infty}, \bM_{\infty}^{-1} \bc_{\infty}\>$. Defining $\hbr$ as in~\eqref{eq:r-def}, we can use Lemma~\ref{lem:change-basis} to expand
\begin{align*}
r(\bY) &= \sum_{A \in \cT_{\le D}} \hat{r}_A \PolH_A(\bY) \\
&= \sum_{A \in \cT_{\le D}} \hat{r}_A \left[\sum_{B \in \cT_{\le D}} m_{AB} \Poly_B(\bY) + 
\PolErr_A(\bY)\right] \\
&= \sum_{A,B \in \cT_{\le D}} \hat{r}_A m_{AB} \Poly_B(\bY) + \sum_{A \in \cT_{\le D}} \hat{r}_A 
\PolErr_A(\bY) \\
&=: p(\bY) + \Delta(\bY)\,.
\end{align*}
In other words, we define $\hat p_B = \sum_{A \in \cT_{\le D}} \hat{r}_A m_{AB}$. 
Since $|\cT_{\le D}| = O(1)$, $\hat{r}_A = O(1)$, and $\EE[\PolErr_A(\bY)^2] = o_n(1)$, we have
\[ \EE[\Delta(\bY)^2]^{1/2}\le \sum_{A \in \cT_{\le D}} |\hat{r}_A| \cdot \E\left[ 
\PolErr_A(\bY)^2\right]^{1/2} = o_n(1)\, . \]
Now compute
\begin{align}
 \EE[p(\bY) \cdot \psi(\theta_1)] &=  \EE[r(\bY) \cdot \psi(\theta_1)] - 
 \EE[\Delta(\bY) \cdot  \psi(\theta_1)] \nonumber\\
 &= \<\bc_{\infty},\bM_{\infty}^{-1} \bc_{\infty}\> +o_n(1)\, , \label{eq:p-conclude-1}
 \end{align}
where the last step follows from  Lemma~\ref{lem:r-success} together with
the remark that $\EE[\psi(\theta_1)^2] = O(1)$, so $|\mathbb{E}[\Delta(\bY) \cdot  \psi(\theta_1)]| 
\le \EE[\Delta(\bY)^2]^{1/2} \EE[\psi(\theta_1)^2]^{1/2} = o_n(1)$. 

Similarly, since $\EE[\PolH_A(\bY)^2] = O(1)$ by Lemma~\ref{lem:M-lim}, and $\hat{r}_A = O(1)$, we have
\[ \EE[r(\bY)^2]^{1/2} \le \sum_{A \in \cT_{\le D}} |\hat{r}_A| \cdot \E\left[ 
\PolH_A(\bY)^2\right]^{1/2} = O(1)\, , \]
and therefore
\begin{align}
 \EE[p(\bY)^2]&=\EE[(r(\bY)-\Delta(\bY))^2] \nonumber\\
& = \EE[r(\bY)^2] - 2 \EE[r(\bY) \Delta(\bY)] + \EE[\Delta(\bY)^2]\nonumber\\
& = \<\bc_{\infty}, \bM_{\infty}^{-1} \bc_{\infty}\>
 + o_n(1)\,. \label{eq:p-conclude-2}
 \end{align} 
Combining Eqs.~\eqref{eq:p-conclude-1} and~\eqref{eq:p-conclude-2} we now conclude
\[ \lim_{n \to \infty} \EE[(p(\bY) - \psi(\theta_1))^2] = \EE[\psi(\theta_1)^2] - \<\bc_{\infty}, \bM_{\infty}^{-1} \bc_{\infty}\> \, , \]
completing the proof.
\end{proof}

\subsection{From tree-structured polynomials to AMP}
\label{sec:TreeToAMP}

In this section we will show that the Bayes-AMP algorithm achieves the same 
asymptotic accuracy as the best tree-structured constant-degree
polynomial $p \in \RR[\bY]_{\le D}^{\cT}$, matching
Eq.~\eqref{eq:tree-vs-poly}. This will complete the proof of the lower bound in
Theorem \ref{thm:Main}.

The key step is to construct a message passing (MP) algorithm that evaluates any given
tree-structured polynomial. We recall the definition of a class of MP algorithms
introduced in~\cite{bayati2015universality} (we make some simplifications with 
respect to the original setting). 
After $t$ iterations, the
 state of an algorithm in this class
is an array of \emph{messages} $(\bmm_{i\to j}^t)_{i,j\in [n]}$ indexed by ordered pairs 
in $[n]$. Each message is a vector $\bmm_{i\to j}^t\in\reals^{\dim}$, where $\dim$ is a 
fixed integer (independent of $n$ and $t$). Here, we use the arrow to emphasize ordering, and entries $i\to i$ are
set to $0$ by convention. 
Updates make use of functions $F_t:\reals^{\dim}\to\reals^{\dim}$ according to
\begin{align}
\bmm_{i\to j}^{t+1}&= \frac{1}{\sqrt{n}}\sum_{k\in [n]\setminus \{i,j\}}Y_{ik}F_t(\bmm^t_{k\to i})\, , 
\;\;\;\;\; \, \;\; \bmm_{i\to j}^0  =\bzero\;\;\forall i\neq j\, 
 .\label{eq:MP1}
\end{align}
We finally define vectors $\bmm_i^t,\hbmm_i^t\in\reals^{\dim}$ indexed by $i\in[n]$:
\begin{align}
\bmm_i^{t+1} &= \frac{1}{\sqrt{n}}\sum_{k\in [n]\setminus \{i\}}Y_{ik}F_t(\bmm^t_{k\to i})\, , \label{eq:MP2a}\\
\hbmm_{i}^{t+1}&= F_t(\bmm^{t+1}_{i})\, .\label{eq:MP2b}
\end{align}

We claim that this class of recursions can be used to evaluate the tree-structured polynomials
$\{\Poly_{\Tree}\}_{\Tree\in\cT_{\le D}}$, for any fixed $D$.
To see this, we let $\dim=|\cT_{\le D}|$ so that we can index the entries 
of $\bmm_{i\to j}^t$ or $\bmm_{i}^t, \hbmm_{i}^t$ by elements of $\cT_{\le D}$.
For instance, we will have:
\begin{align}
\bmm^t_{i\to j}= \big(\mm_{i\to j}^t(\Tree_{(1)}),\mm_{i\to j}^t(\Tree_{(2)}),\dots,
\mm_{i\to j}^t(\Tree_{(\dim)})\big)\, ,
\end{align}
where $\cT_{\le D} = (\Tree_{(1)},\Tree_{(2)},\dots,\Tree_{(\dim)})$ is an enumeration of $\cT_{\le D}$.

Given $\Tree\in \cT_{\le D}$, we define $\Tree_+$ to be the graph with vertex
set $V(\Tree)\cup\{v_+\}$ (where $v_+$ is not an element of $V(\Tree)$), 
edge set $E(\Tree)\cup\{(v_0,v_+)\}$ where $v_0$ was the root of $\Tree$.
We set the root of $\Tree_+$ to be $\circ=v_+$. In words, $\Tree_+$ is obtained by attaching an edge
to the root of $\Tree$, and moving the root to the other endpoint of the new edge.

If the root of $\Tree\in \cT_{\le D}$ has degree $k$, we 
let $\Tree_1,\dots, \Tree_k$ denote the connected subgraphs of $\Tree$
that are obtained by removing the root and the edges incident to the root.
Notice that each $\Tree_{j}$ is a tree which we root at the unique vertex $v_j$
such that $(\circ,v_j)\in E(\Tree)$.
 We write
$\cD(\Tree)= \{\Tree_1,\Tree_2,\dots,\Tree_k\}$. We then define the
special mapping $F^*:\reals^{|\cT_{\le D}|}\to \reals^{|\cT_{\le D}|}$,
by letting, for any $\Tree\in \cT_{\le D}$,
\begin{align}
F^*(\bmm)(\Tree) := \prod_{\Tree'\in \cD(\Tree)}\mm(\Tree')\, .\label{eq:SpecialUpdate}
\end{align}
In order to clarify the connection between this MP algorithm and tree-structured polynomials,
we define the following modification of the tree-structured polynomials 
of Eq.~\eqref{eq:F_A}. 
For each $\Tree \in \cT_{\le D}$, and each pair of distinct indices
$i,j\in[n]$, we define
\begin{equation}\label{eq:F_directed}
\Poly_{\Tree,i\to j}(\bY) =  \frac{1}{n^{|E(\Tree)|/2}}\sum_{\phi \in \nr(\Tree;i\to j)} \, 
\prod_{(u,v) \in E(\Tree)} Y_{\phi(u),\phi(v)}\,.
\end{equation}
Here $\nr(\Tree;i\to j)$ is the set of labelings i.e., 
maps $\phi: V(\Tree) \to [n]$ that are non-reversing 
(in the same sense as Definition \ref{def:NonReversing}) and such that
the following hold:
\begin{itemize}
\item $\phi(\circ)=i$. (The labeling is rooted at $i$, not at $1$.)
\item For any $v\in V(\Tree)$ such that $(\circ,v)\in E(\Tree)$, we have 
$\phi(v)\neq j$.
\end{itemize}
Notice also the different normalization with respect to Eq.~\eqref{eq:F_A}.
However, by counting the choices at each vertex we have $|\nr(\Tree;i\to j)| = (n-2)^{|E(\Tree)|}$,
$|\nr(\Tree)| = (n-1)(n-2)^{|E(\Tree)|-1}$
and therefore 
\begin{equation}
\frac{|\nr(\Tree;i\to j)|}{n^{|E(\Tree)|}} = 1+o_n(1)\, ,\;\;\;
\frac{|\nr(\Tree)|}{n^{|E(\Tree)|}} = 1+o_n(1)\, .
\end{equation}
We also define the radius of a rooted graph
$\Graph$, $\rad(\Graph):=\max\{\dist_{\Graph}(\circ,v):\; v\in V(\Graph)\}$.
\begin{proposition}\label{propo:TreetoAMP}
Let $\bmm^t_{i\to j}$, $\bmm_i^t$, $\hbmm_{i}^{t}$ be the iterates defined by Eqs.~\eqref{eq:MP1},
\eqref{eq:MP2a}, \eqref{eq:MP2b} with $F_t=F^*$ given by Eq.~\eqref{eq:SpecialUpdate}.
Then, for any $\Tree \in \cT_{\le D}$, and any $t>\rad(\Tree)$, 
recalling the definition of $T_+$ given above, we have the following 
(here the $o_n(1)$ terms are uniform in $\bY$):
\begin{align*}
\bmm^t_{i\to j}(\Tree) &= \Poly_{\Tree_+,i\to j}(\bY)\, ,\\
\bmm_{1}^{t}(\Tree) &= \frac{|\nr(\Tree_+)|}{n^{|E(\Tree_+)|/2}}\Poly_{\Tree_+}(\bY)
= (1+o_n(1))\cdot \Poly_{\Tree_+}(\bY)\, ,\\
\hbmm_{1}^{t}(\Tree) &= \frac{|\nr(\Tree)|}{n^{|E(\Tree)|/2}}\Poly_{\Tree}(\bY)
= (1+o_n(1))\cdot \Poly_{\Tree}(\bY)\, .
\end{align*}
\end{proposition}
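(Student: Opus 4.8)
The plan is to prove the three identities simultaneously by induction on $t$, with the first (the message identity $\bmm^t_{i\to j}(\Tree) = \Poly_{\Tree_+,i\to j}(\bY)$) being the workhorse; the other two follow from it by unwinding Eqs.~\eqref{eq:MP2a} and~\eqref{eq:MP2b} one more step and matching the combinatorial normalizations. So the heart of the argument is: for every $\Tree \in \cT_{\le D}$ and every $t > \rad(\Tree)$, one has $\bmm^t_{i\to j}(\Tree) = \Poly_{\Tree_+,i\to j}(\bY)$. Note this is an \emph{exact} identity for $t$ large enough (not just up to $o_n(1)$): the $o_n(1)$ errors only enter when we pass from the directed, $n^{|E|/2}$-normalized polynomials back to the symmetric $\Poly_{\Tree}$ via $|\nr(\Tree_+)|/n^{|E(\Tree_+)|/2} = 1 + o_n(1)$.

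\textbf{Base case and inductive step.} When $\Tree$ is the single-vertex tree, $\Tree_+$ is a single edge, $\Poly_{\Tree_+, i\to j}(\bY) = \frac{1}{\sqrt n}\sum_{k \neq i,j} Y_{ik}$, and $F^*(\bmm)(\Tree) = 1$ (empty product over $\cD(\Tree) = \emptyset$), so Eq.~\eqref{eq:MP1} gives exactly this after one step; since $\rad(\Tree) = 0$, any $t \ge 1$ works. For the inductive step, fix $\Tree$ with root of degree $k$ and decomposition $\cD(\Tree) = \{\Tree_1,\dots,\Tree_k\}$. By definition of $\Tree_+$, its root $v_+$ has a single child $v_0$ (the old root of $\Tree$), and $v_0$ has children hanging the subtrees $\Tree_1,\dots,\Tree_k$. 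A non-reversing labeling $\phi \in \nr(\Tree_+; i\to j)$ thus assigns $\phi(v_+) = i$, some $\phi(v_0) = \ell \notin \{i,j\}$, and then on each subtree $\Tree_m$ (rooted at the child $v_m$ of $v_0$) a non-reversing labeling with $\phi(v_m) \neq i$ — i.e.\ a labeling in $\nr((\Tree_m)_+; \ell \to i)$ once we observe that attaching the edge $(\ell, v_m)$ makes $(\Tree_m)_+$ with root $\ell$ — but we must also check that the non-reversing condition does not couple the different subtrees or the new edge $(i,\ell)$ with the subtrees. This is exactly where the precise form of Definition~\ref{def:NonReversing} is used: a violation of non-reversing across two subtrees (or between the stem edge and a subtree) would require two vertices with equal label at distance $\le 2$ that are \emph{not} siblings, which forces them to lie in a common subtree or at the $v_0$-level, and the siblings-exception handles the $v_0$-children. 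Hence the labelings of $\Tree_+$ rooted $i \to j$ factor as: choose $\ell \notin \{i,j\}$, then independently choose $\phi_m \in \nr((\Tree_m)_+; \ell \to i)$ for each $m$ (with a small correction because the $\phi_m$ are really required to be jointly injective-ish in the non-reversing sense only within radius $2$, which the definition has been engineered to respect across subtrees). The product over edges then factors as $Y_{i\ell} \cdot \prod_m \prod_{e \in E(\Tree_m)} Y_{\phi_m(\cdot)}$, and after matching $n^{|E(\Tree_+)|/2}$ against $\sqrt n \cdot \prod_m n^{|E((\Tree_m)_+)|/2}$ (using $|E(\Tree_+)| = 1 + \sum_m |E((\Tree_m)_+)|$), we get precisely $\frac{1}{\sqrt n}\sum_{\ell \neq i,j} Y_{i\ell} \prod_m \Poly_{(\Tree_m)_+, \ell \to i}(\bY) = \frac{1}{\sqrt n}\sum_{\ell\neq i,j} Y_{i\ell} F^*(\bmm^{t-1}_{\ell\to i})(\Tree)$ by the induction hypothesis applied to each $\Tree_m$, which is the update~\eqref{eq:MP1}. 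The hypothesis applies because $\rad(\Tree_m) < \rad(\Tree) < t$, so $t - 1 > \rad(\Tree_m)$.

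\textbf{From messages to $\bmm_1^t$ and $\hbmm_1^t$.} Running~\eqref{eq:MP2a} at $i=1$ gives $\bmm_1^t(\Tree) = \frac{1}{\sqrt n}\sum_{\ell \neq 1} Y_{1\ell} F^*(\bmm^{t-1}_{\ell\to 1})(\Tree)$; the same factorization argument (now with the root labeled $1$ rather than $i$, and dropping the one forbidden value $j$ since there is no $j$) identifies this with $\frac{1}{n^{|E(\Tree_+)|/2}}\sum_{\phi\in\nr(\Tree_+)}\prod_{e}Y_{\phi(\cdot)} = \frac{|\nr(\Tree_+)|}{n^{|E(\Tree_+)|/2}}\Poly_{\Tree_+}(\bY)$, and then $|\nr(\Tree_+)| = (n-1)(n-2)^{|E(\Tree_+)|-1} = n^{|E(\Tree_+)|}(1+o_n(1))$ gives the $(1+o_n(1))$ form. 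Finally $\hbmm_1^t(\Tree) = F^*(\bmm^t_1)(\Tree) = \prod_{\Tree' \in \cD(\Tree)} \bmm_1^t(\Tree')$; each factor equals $\frac{|\nr((\Tree')_+)|}{n^{|E((\Tree')_+)|/2}}\Poly_{(\Tree')_+}(\bY)$, and the product over $\cD(\Tree)$ of the $\Poly_{(\Tree')_+}$ reassembles $\Poly_{\Tree}$ — this reassembly is again the non-reversing factorization, now read at the old root $v_0$: a non-reversing labeling of $\Tree$ (rooted at $1$) is the same as a choice of label at $v_0$ together with non-reversing labelings of the $(\Tree_m)_+$ rooted at that label, and the normalizations multiply to $\frac{|\nr(\Tree)|}{n^{|E(\Tree)|/2}} = 1 + o_n(1)$.

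\textbf{Main obstacle.} The delicate point is verifying that the non-reversing condition on $\Tree_+$ (or on $\Tree$) decouples \emph{exactly} into the per-subtree non-reversing conditions with no leftover constraints — in particular that two vertices in different subtrees of $v_0$, or one in a subtree and one on the stem edge, can never be at distance $\le 2$ unless they are handled by the sibling-exception. This requires a careful case analysis of all pairs of vertices at distance $2$ in $\Tree_+$: both children of the same vertex (sibling exception, allowed, and internal to one subtree unless it is the pair of $v_0$-children across subtrees — which is precisely the allowed case), or a parent–grandparent–type path (distance $2$ straddling $v_0$, which would need equal labels at $v_+$ and a grandchild of $v_+$; one checks $v_+$ has label $i$ and grandchildren have labels $\neq i$ by the forbidden-value constraint at the $v_0$-children, so no violation). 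Getting this bookkeeping right — and confirming that the constraint ``$\phi(v) \neq j$ for children $v$ of the root'' in $\nr(\Tree;i\to j)$ is \emph{exactly} what is needed to make the recursion close — is the crux; everything else is normalization arithmetic and dominated-error accounting. I would isolate this as a self-contained combinatorial lemma (bijection between $\nr(\Tree_+; i\to j)$ and tuples $(\ell, \phi_1, \dots, \phi_k)$) and prove the proposition by feeding that lemma into the induction.
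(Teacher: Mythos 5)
Your proposal is correct and is exactly the argument the paper has in mind when it says the proof ``amounts to check that the first claim holds by induction on $\rad$'': the factorization of $\nr(\Tree_+;i\to j)$ into a choice of $\ell=\phi(v_0)\in[n]\setminus\{i,j\}$ and independent labelings $\phi_m\in\nr((\Tree_m)_+;\ell\to i)$, together with the matching of the $n^{|E|/2}$ normalizations, is precisely the content of the inductive step, and your case analysis of distance-$\le 2$ pairs straddling $v_+,v_0$ and the subtrees is the correct verification that the non-reversing condition decouples. One small note: in the second and third displays the prefactor should be $\sqrt{|\nr(\Tree_+)|}/n^{|E(\Tree_+)|/2}$ rather than $|\nr(\Tree_+)|/n^{|E(\Tree_+)|/2}$ (since $\Poly_{\Tree}$ already carries a $|\nr(\Tree)|^{-1/2}$ normalization); you have reproduced what appears to be a typo in the paper's statement, but the $(1+o_n(1))$ conclusion is unaffected.
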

\begin{proof}
The proof is straightforward, and amounts to check that the first claim holds 
by induction on $\rad(\Graph)$.
\end{proof}

We next show that, for a broad class of choices of the functions
$F_t$, the high-dimensional asymptotics of the algorithm defined by Eqs.~\eqref{eq:MP1}, \eqref{eq:MP2a}, \eqref{eq:MP2b} is determined by a generalization of the state evolution recursion of 
Theorem \ref{thm:SE}. 
\begin{lemma}\label{lemma:SEVec}
Assume that $\pi_{\Theta}$ has finite moments of all orders and, for each $t\ge 0$,
 $F_t:\reals^{\dim}\to \reals^{\dim}$ is a polynomial independent of $n$.
Define the sequence of vectors $\bmu_t\in\reals^{\dim}$ and positive semidefinite matrices
$\bSigma_t\in \reals^{\dim\times \dim}$ via the state evolution equations 
\eqref{eq:GeneralSE1}, \eqref{eq:GeneralSE2}.

Then for any polynomial $\psi:\reals^{\dim}\times \reals\to\reals$,
the following limits hold for $(\Theta,\bG_t)\sim\pi_{\Theta}\otimes \normal(0,\bSigma_t)$:
\begin{align}
&\lim_{n\to\infty}
\E[\psi(\bmm^t_{1\to 2},\theta_1)] = 
\lim_{n\to\infty}
\E[\psi(\bmm^t_1,\theta_1)]= 
\E\psi\big(\bmu_t\Theta+\bG_t,\Theta\big)\, , \label{eq:SEVec0-Convergence}\\
&\lim_{n\to\infty}
\E[\psi(\hbmm^t_1,\theta_1)]= 
\E\psi\big(F_t(\bmu_t\Theta+\bG_t),\Theta\big)\, .\label{eq:SEVec1-Convergence}
\end{align}
\end{lemma}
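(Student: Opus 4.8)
The plan is to recognize the message-passing recursion \eqref{eq:MP1}--\eqref{eq:MP2b} as a special case of the general abstract message-passing framework of \cite{bayati2015universality}, for which a state-evolution theorem is already available, and to reduce \eqref{eq:SEVec0-Convergence}--\eqref{eq:SEVec1-Convergence} to that theorem. Concretely, the first step is to set up the dictionary: the iteration~\eqref{eq:MP1} is a symmetric message-passing iteration on the complete graph with edge weights $Y_{ik}/\sqrt n$, nonlinearity $F_t$ applied coordinatewise to incoming messages, and uninformative initialization; the estimates in \eqref{eq:MP2a}--\eqref{eq:MP2b} are just ``node marginals'' obtained by aggregating incoming messages. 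One must verify that the combinatorial bookkeeping in~\cite{bayati2015universality} (exclusion of the indices $\{i,j\}$, the $1/\sqrt n$ normalization, the fact that the diagonal entries $Y_{ii}$ are harmless since they are $O(1)$ and excluded) matches the hypotheses there, and that the polynomial growth of $F_t$ (hypothesis of the lemma) is exactly the regularity condition needed to invoke their universality/convergence result. Since $\pi_\Theta$ has all moments finite and the rank-one signal contributes a planted direction with i.i.d.\ coordinates, the ``signal + GOE-type noise'' input to the iteration is covered.

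The second step is to write down the associated state evolution: because $F_t$ acts row-wise and the iteration is linear in $\bY$ apart from the nonlinearity, the asymptotic joint law of a typical message $\bmm^t_{1\to 2}$ together with the signal coordinate $\theta_1$ is $(\bmu_t\Theta + \bG_t, \Theta)$ with $(\bmu_t,\bSigma_t)$ evolving by \eqref{eq:GeneralSE1}--\eqref{eq:GeneralSE2}. The ``message'' versions $\bmm^t_{1\to 2}$ and the ``node'' versions $\bmm^t_1$ have the same limiting law because they differ only by the inclusion/exclusion of a vanishing fraction ($O(1/n)$) of the summands, and each summand contributes $O(1/\sqrt n)$ in an $L^2$ sense — a short second-moment estimate handles this and gives \eqref{eq:SEVec0-Convergence}. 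Equation \eqref{eq:SEVec1-Convergence} then follows immediately by applying the test function $\psi(F_t(\,\cdot\,),\,\cdot\,)$, which is again a polynomial, to the convergence statement for $\bmm^{t+1}_1$ — equivalently, $\hbmm^{t+1}_1 = F_t(\bmm^{t+1}_1)$ by definition, and $\psi\circ(F_t\times\mathrm{id})$ is admissible.

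The third step is to upgrade the high-probability / in-probability convergence that these theorems typically deliver to convergence of expectations, as stated. This requires uniform integrability of $\psi(\bmm^t_1,\theta_1)$, which follows from an a priori moment bound: since $F_t$ is a fixed polynomial and $\psi$ is a fixed polynomial, it suffices to bound $\E\|\bmm^t_1\|^{2p}$ uniformly in $n$ for every fixed $p$, which one proves by induction on $t$ using the moment bounds on $\bY$ (again all moments of $\pi_\Theta$ and of the Gaussian noise are finite) together with the polynomial growth of $F_{t-1}$; the degrees compound but remain $O(1)$ for fixed $t$. I expect this moment-propagation bound to be the main technical obstacle, since the cross terms in $\|\bmm^{t+1}_1\|^{2p}=\|\frac{1}{\sqrt n}\sum_k Y_{1k}F_t(\bmm^t_{k\to 1})\|^{2p}$ involve correlations between $Y_{1k}$ and $\bmm^t_{k\to 1}$ through the planted signal; one controls these by the standard device of conditioning on $\btheta$ and exploiting that $\bmm^t_{k\to 1}$ depends on the noise only through entries not incident to the pair under consideration, or alternatively by citing the corresponding a priori estimates already present in~\cite{bayati2015universality}. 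Once uniform integrability is in hand, the stated convergence of expectations is immediate, completing the proof.
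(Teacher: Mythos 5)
Your high-level plan---invoke the state-evolution machinery from \cite{bayati2015universality}, then upgrade in-probability convergence to convergence of expectations via moment bounds and uniform integrability---is in the right spirit, and the moment-propagation bound you identify as the main technical obstacle indeed appears in the paper (Lemma~\ref{lemma:BoundedMoments}). But the proposal underestimates the core step and contains one outright incorrect claim.

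The incorrect claim is that one can control correlations ``by the standard device of conditioning on $\btheta$ and exploiting that $\bmm^t_{k\to 1}$ depends on the noise only through entries not incident to the pair under consideration.'' This is false: the message-passing exclusion only removes first-order backtracking. Expanding the recursion two or more steps, $\bmm^t_{k\to i}$ does involve $Y_{ik}$ (e.g.\ via $\bmm^{t-1}_{l\to k}$ which sums over $m\ne l,k$, where $m=i$ is allowed), so conditioning on $\btheta$ does \emph{not} make $\bmm^t_{k\to i}$ independent of $Y_{ik}$. Handling precisely these residual correlations is the technical heart of the proof, and it is why the paper cannot treat \cite{bayati2015universality} as a black box. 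Your alternative fallback---``citing the corresponding a priori estimates already present in \cite{bayati2015universality}''---is also not available as stated: the paper explicitly reproves (Lemma~\ref{lemma:MS_SE}) rather than cites the relevant state-evolution statement, indicating that the BLM result does not directly cover single-coordinate polynomial expectations in the spiked model.

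What the paper actually does is introduce a coupled \emph{fresh-noise} iteration $\bms^t$ driven by independent noise matrices $\bY(t)$, and then (i) define ``strongly non-reversing'' labelings, for which each $Y_{ij}$-factor appears at only one time slot; (ii) show via an explicit tree-polynomial and pattern-counting argument (Lemma~\ref{lemma:time}, Corollaries~\ref{coro:YYstar} and~\ref{coro:MM_MS}) that $\E[\psi(\theta_1,\bmm_1^t)]$ and $\E[\psi(\theta_1,\bms_1^t)]$ agree up to $O(n^{-1/2})$; and (iii) prove state evolution for the fresh-noise iteration by a conditional CLT given $\cF_t$, where the freshness of the noise makes the conditional independence structure genuine rather than approximate. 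Steps (i)--(ii) are exactly the work your proposal elides. If you want to repair your argument without inventing the coupling, you would need to verify in detail that \cite{bayati2015universality}'s combinatorial estimates extend verbatim to the rank-one-plus-GOE model and to fixed-coordinate (not just empirical-average) polynomial statistics---which amounts to redoing the paper's Appendix~\ref{app:LemmaSE}.
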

\noindent The proof of this lemma is based on results from \cite{bayati2015universality} 
and will be presented in Appendix \ref{app:LemmaSE}.

We are now in position to conclude the proof of the lower bound~\eqref{eq:LowerBound} on the optimal error of Low-Deg 
estimators in Theorem \ref{thm:Main}.
The quantity we want to lower bound takes the form
\begin{align*}
\MSE_{\le D} :=& \lim_{n\to\infty} \inf_{\hbtheta\in \LD(D;n)}\frac{1}{n}\E\big\{\|\hbtheta(\bY)-\btheta\|_2^2\big\} \\
=& \lim_{n\to\infty} \inf_{\hbtheta\in \LD(D;n)}\frac{1}{n}\sum_{i=1}^n \E\left\{(\htheta_i(\bY)-\theta_i)^2\right\} \\
=& \lim_{n\to\infty} \inf_{q \in \RR[\bY]_{\le D}} \E\left\{(q(\bY)-\theta_1)^2\right\}
\intertext{which by Proposition~\ref{prop:tree-vs-poly} takes the form}
=& \lim_{n\to\infty} 
\E\Big\{\Big(\theta_1-\sum_{T\in\cT_{\le D}} \hat{p}_T \Poly_{\Tree}(\bY)
\Big)^2\Big\}\, ,
\end{align*}
for some $n$-independent choice of coefficients $\hat{p}_T$.
%
%
On the other hand, by Proposition \ref{propo:TreetoAMP}, letting
 $\hbmm_{1}^{t}$ be the iterates defined by Eqs.~\eqref{eq:MP1},
\eqref{eq:MP2a}, \eqref{eq:MP2b}, with  $F_t=F^*$ given by Eq.~\eqref{eq:SpecialUpdate},
we have, for any $t>D$, 
\begin{align}
 \lim_{n\to\infty}\E\big[\theta_1 \cdot \Poly_{\Tree}(\bY)\big]&=\lim_{n\to\infty}\E\big[\theta_1 \cdot \hmm^t_1(\Tree)\big]\\
& \stackrel{(a)}{=}\E[\Theta F^*(\bmu_t\Theta+\bG_t)(\Tree)]=\bmu_{t+1}(\Tree)\, ,
\end{align}
where in step $(a)$ we used 
Lemma \ref{lemma:SEVec}. (Here $\bmu_t$, $\bSigma_t$ are defined recursively by Eqs.~\eqref{eq:GeneralSE1}, \eqref{eq:GeneralSE2}. Recall that $\bmu_t\in\reals^{|\cT_{\le D}|}$ can be indexed by elements $T$ of $\cT_{\le D}$.)

Analogously, we obtain
\begin{align}
\lim_{n\to\infty} \Big(\E\big[\Poly_{\Tree_1}(\bY)\Poly_{\Tree_2}(\bY)\big]\Big)_{\Tree_1,\Tree_2\in\cT_{\le D}} &=
 \E\big[F^*(\bmu_t\Theta+\bG_t)F^*(\bmu_t\Theta+\bG_t)^{\sT}\big]=\bSigma_{t+1}\, .
\end{align}
Hence we obtain that the optimal error achieved by Low-Deg estimators is given by
\begin{align}
\MSE_{\le D} &=\lim_{n\to\infty} 
\E\Big\{\Big(\theta_1-\sum_{T\in\cT_{\le D}} \hat{p}_T \Poly_{\Tree}(\bY)
\Big)^2\Big\}\\
& = \E[\Theta^2]-2\<\hbp,\bmu_{t+1}\> + \<\hbp,\bSigma_{t+1}\hbp\>\, .
\end{align}
%
However, by Theorem \ref{thm:SE} there exists an AMP algorithm of the form~\eqref{eq:FirstAMP} that achieves exactly the same error. Simply take $\dim=|\cT_{\le D}|$,
$F_t=F^*$ as defined in Eq.~\eqref{eq:SpecialUpdate}, and 
$g_t(\bx^t) = \<\hat{p}_T ,F^*(\bx^t)\>$.
The desired lower bound follows by applying the optimality result of Theorem~\ref{thm:BayesSE}.

\section*{Acknowledgments}

This project was initiated when the authors were visiting the Simons Institute for the Theory of 
Computing during the program on Computational Complexity of Statistical Inference in Fall 2021:
we are grateful to the Simons Institute for its support.

AM was supported by the NSF through award DMS-2031883, the Simons Foundation through
Award 814639 for the Collaboration on the Theoretical Foundations of Deep Learning, the NSF grant
CCF-2006489 and the ONR grant N00014-18-1-2729, and a grant from Eric and Wendy Schmidt
at the Institute for Advanced Studies. Part of this work was carried out while Andrea Montanari
was on partial leave from Stanford and a Chief Scientist at Ndata Inc dba Project N. The present
research is unrelated to AM’s activity while on leave.

Part of this work was done while ASW was with the Simons Institute for the 
Theory of Computing, supported by a Simons-Berkeley Research Fellowship. Part of 
this work was done while ASW was with the Algorithms and Randomness Center at Georgia 
Tech, supported by NSF grants CCF-2007443 and CCF-2106444.

\newpage

\appendix

\section{Proof of Theorem \ref{thm:Earlier}}
\label{app:IT-result}

Let $t_0:=\E[\Theta]$. We assume, without loss of generality, $t_0>0$. Let $\pi_{t}$ be the version of 
$\pi_{\Theta}$ centered at $t\ge 0$, namely the law of $\Theta-t_0+t$ when $\Theta\sim\pi_{\Theta}$
(in particular $\pi_{\Theta}=\pi_{t_0}$). We finally let $\btheta_t$ be a vector with i.i.d.\ coordinates
with distribution $\pi_t$, and (for $\bZ\sim\GOE(n)$)
\begin{align}
\bY_t = \frac{1}{\sqrt{n}}\btheta_t\btheta_t^{\sT}+\bZ\, .
\end{align}
We will take $\btheta_t=\btheta_0+t\bfone$.
The normalized mutual information between $\bY_t$ and $\btheta_t$ is given, after
simple manipulations, by
\begin{align*}
\phi_n(t) :=& \frac{1}{n}I(\bY_t;\btheta_t) \\
=& -\frac{1}{n}\E_{\btheta_t,\bZ}\log\left\{
\int \exp\Big(-\frac{1}{4n}\big\|\obtheta_t\obtheta_t^{\sT}-\btheta_t\btheta_t^{\sT}\big\|^2_F
+\frac{1}{2n^{1/2}}\<\obtheta_t,\bZ\obtheta_t\>\Big)\,\pi_t^{\otimes n}(\de\obtheta_t)\right\}\, .
\end{align*}
By writing $\btheta_t = \btheta_0+t\bfone$, $\obtheta_t = \obtheta_0+t\bfone$, where 
$\btheta_0\sim \pi_0^{\otimes n}$, we get 
\begin{align}
\phi_n(t) &:= -\frac{1}{n}\E_{\btheta_0,\bZ}\log\left\{e^{-\cH_n(\obtheta_0,\btheta_0;t)}\pi_0^{\otimes n}(\de\obtheta_0)\right\}\, ,
\label{eq:phinHn}
\end{align}
where
\begin{align*}
\frac{\partial\cH_n}{\partial t}(\obtheta_0,\btheta_0;t) = \frac{1}{n}
\Big\{\|\obtheta_t-\btheta_t\|^2\<\obtheta_t,\bfone\>-\<\obtheta_t-\btheta_t,\obtheta_t\>\<\obtheta_t-\btheta_t\,\bfone\>\Big\}-\frac{1}{n^{1/2}}\<\bZ,\bfone\obtheta_t^{\sT}\>\, .
\end{align*}
Let $\mu_{\bY_t}(\de\obtheta_t)$ be the Bayes posterior over $\obtheta_t$:
\begin{align}
\mu_{\bY_t}(\de\obtheta_t) \propto \exp\Big(-\frac{1}{4}\Big\|\bY_t-\frac{1}{n^{1/2}}\obtheta_t\obtheta_t^{\sT}\Big\|_F^2\Big)\,
\pi^{\otimes n}_t(\de\obtheta_t)\, .
\end{align}
The above implies
\begin{align}
\phi'_n(t) &= A_n(t) + B_n(t)\, ,\\
A_n(t) &:= \frac{1}{n^2}\E_{\btheta_t,\bZ}\mu_{\bY_t}\Big(\|\obtheta_t-\btheta_t\|^2\<\obtheta_t,\bfone\>-
\<\obtheta_t-\btheta_t,\obtheta_t\>\<\obtheta_t-\btheta_t\,\bfone\> \Big)\, ,\\
B_n(t) &:= -\frac{1}{n^{3/2}}\E_{\btheta_t,\bZ}\<\bZ,\mu_{\bY_t}(\bfone\obtheta_t^{\sT})\>\, .
\end{align}
Integrating $\bZ$ by parts, we get
\begin{align}
B_n(t) &:= -\frac{1}{n^{2}}\E_{\btheta_t,\bZ}\Big\{\mu_{\bY_t}
\big(\<\bfone,\obtheta_t\>\|\obtheta_t\|^2\big)-\<\mu_{\bY_t}(\obtheta_t),
\mu_{\bY_t}(\<\bfone,\obtheta_t\>\obtheta_t)\>\,\Big\}\, .
\end{align}
We finally note that $\btheta_t,\obtheta_t$ are jointly distributed according to 
$\sE F(\btheta_t,\obtheta_t) = \E\mu_{\bY_t}^{\otimes 2}\big(F(\btheta_t,\obtheta_t) \big)$.
Note that the pair $\btheta_t,\obtheta_t$ is exchangeable, with marginals
$\btheta_{t}\sim \pi_t^{\otimes n}$, $\obtheta_t\sim \pi_t^{\otimes n}$.
Writing the above expectations in terms of this measure, we get
\begin{align}
A_n(t) &= \frac{1}{n^2}\sE\Big(\|\obtheta_t-\btheta_t\|^2\<\obtheta_t,\bfone\>-
\<\obtheta_t-\btheta_t,\obtheta_t\>\<\obtheta_t-\btheta_t,\bfone\> \Big)\, ,\label{eq:Afin}\\
B_n(t) & = -\frac{1}{n^{2}}\sE\Big(\<\bfone,\obtheta_t\>\|\obtheta_t\|^2-
\<\bfone,\obtheta_t\>\<\btheta_t,\obtheta_t\>\,\Big)\, .\label{eq:Bfin}
\end{align}
By concentration inequalities for sums of independent random variables,
there exist constants $C_k$ such that
\begin{align}
&\sE\big(\|\btheta_t\|_2^{2k}\big)  \le C_k n^k\, ,\\
&\sE\big(  \vert\<\btheta_t,\bfone\>-nt \vert^k\big)\le C_kn^{k/2}\, ,\\
&\sE\big(\vert\|\btheta_t\|_2^2-\sE[\|\btheta_t\|_2^2]\vert^{k}\big)  \le C_k n^k\, .
\end{align}
The same inequalities obviously hold for $\obtheta_t$.

Using these bounds in Eqs.~\eqref{eq:Afin}, \eqref{eq:Bfin}, we get
\begin{align}
A_n(t) &= \frac{t}{n}\sE\big\{\|\obtheta_t-\btheta_t\|^2\big\}+O(n^{-1/2})\, ,\\
B_{n}(t) &= -\frac{t}{n}\sE\{\|\btheta_t\|^2_2\}+\frac{t}{n^2}
\E\big\{\big\|\hbtheta_t(\bY_t)\big\|^2_2\big\}+O(n^{-1/2})\, .
\end{align}
Summing these, we finally obtain
\begin{align}
\phi'_n(t) &=\frac{t}{n}\E\big\{\big\|\btheta_t -\hbtheta_t(\bY_t)\big\|_2^2\big\}+O(n^{-1/2})
\label{eq:PhiPrimeMSE}\\
&=: t\, \MSE(t;n)+O(n^{-1/2})\, ,\nonumber
\end{align}
where we note that the $O(n^{-1/2})$ term is uniform in $t$.

Now recalling the definition of Eq.~\eqref{eq:PsiDef}, we 
have, by \cite[Theorem 1]{lelarge2019fundamental},
\begin{equation}
\phi_{\infty}(t):= \lim_{n\to\infty} \phi_n(t) = \inf_{q\ge 0} \Psi(q;0,\pi_t)  \, .\label{eq:PhiPhiInfty}
\end{equation}
Note that 
\begin{align}
\Psi(q;0,\pi_t) = \Psi(q;t^2,\pi_0)+\frac{1}{2}t^2\Var(\Theta)+\frac{1}{4}t^4\, .
\end{align}
Differentiating with respect to $t$, we deduce that
\begin{enumerate}
\item $\phi_{\infty}(t)$ is differentiable at $t$ if and only if  $b\mapsto \Psi(q;b,\pi_t)$
is differentiable at $b=0$. Both are in turn equivalent to $q\mapsto \Psi(q;0,\pi_t)$
being uniquely minimized at $q_{\sBayes}(\pi_{\Theta})$.
\item If the last point holds, then (with $\Theta_t = \Theta-t_0+t\sim\pi_t$)
\begin{align}
\phi_{\infty}'(t) =t\cdot \big(\E[\Theta^2_t]-q_{\sBayes}(\pi_{\Theta})\big)\, .
\end{align}
\end{enumerate}
Comparing with Eq.~\eqref{eq:PhiPrimeMSE}, we are left with the task of proving that
$\lim_{n\to\infty}\phi_{n}'(t_0)=\phi_{\infty}'(t_0)$ when $\phi_{\infty}$ is differentiable at $t_0$.

Taking the second derivative of Eq.~\eqref{eq:phinHn} yields
\begin{align*}
\phi''_n(t) &= -\frac{1}{n}\E_{\btheta_0,\bZ}\Var_{\mu_{\bY_t}}\big(\partial_t\cH_n(\obtheta_0,\btheta_0;t)\big)
+ \frac{1}{n}\E_{\btheta_0,\bZ}\mu_{\bY_t}\big(\partial^2_t\cH_n(\obtheta_0,\btheta_0;t)\big)\\
&=: -D_n(t)+E_n(t)\, ,\label{eq:PhiSec}
\end{align*} 
where $D_n(t)\ge 0$ by construction.
A direct calculation yields
\begin{align}
\frac{\partial^2\cH_n}{\partial t^2}(\obtheta_0,\btheta_0;t) = \frac{1}{2}\|\obtheta_t-\btheta_t\|^2
+\frac{1}{2n}\<\obtheta_t-\btheta_t,\bfone\>^2 -\frac{1}{2 n^{1/2}}\<\bfone,\bZ\bfone\>\, .
\end{align}
Hence proceeding as for the first derivative, we get
\begin{align}
E_n(t) &= \frac{1}{2n} \E_{\btheta_0,\bZ}\mu_{\bY_t}\big(\|\obtheta_t-\btheta_t\|^2\big)
+\frac{1}{2n} \E_{\btheta_0,\bZ}\mu_{\bY_t}\big(\<\obtheta_t-\btheta_t,\bfone\>^2\big)\\
& = \MSE(t;n)+ O(n^{-1/2})\, ,\label{eq:En}
\end{align}
where, once more, the $O(n^{-1/2})$ term is uniform in $t$.

Consider now the function $f_n(t)=-t^{-1/2}\, \phi_n(t)$, defined on an interval $[t_1,t_2]$,
with $0<t_1<t_0<t_2$ (with $n\in\naturals\cup \{\infty\}$). We conclude that
\begin{enumerate}
\item By Eq.~\eqref{eq:PhiPhiInfty}, $\lim_{n\to\infty}f_n(t) = f_{\infty}(t)$ for any
$t\in [0,1]$. (In fact the convergence is uniform since $\phi'_n(t)$ is uniformly bounded 
by Eq.~\eqref{eq:PhiPrimeMSE}, but we will not use this fact.)
\item We have $f'_n(t) = \phi_n(t)/(2t^{3/2})-\phi'_n(t)/t^{1/2}$. Hence, by the previous point,
$\lim_{n\to\infty}f'_n(t) =f'_{\infty}(t)$ at a point $t$ if and only if 
$\lim_{n\to\infty}\phi'_n(t) =\phi'_{\infty}(t)$.
\item We have  $f''_n(t) = -3\phi_n(t)/(4t^{5/2})+\phi'_n(t)/t^{3/2}-\phi''_n(t)/t^{1/2}$.
Substituting Eqs.~\eqref{eq:PhiPrimeMSE}, \eqref{eq:PhiSec}, \eqref{eq:En}, we obtain
\begin{align}
f''_n(t) = -\frac{3}{4t^{5/2}}\phi_n(t) +\frac{1}{t^{1/2}} D_n(t)\,.
\end{align}
In particular, the first term is uniformly bounded (by boundedness of the normalized
mutual information on $[t_1,t_2]$), and the second is
non-negative.
\end{enumerate}

Points 1 and 3 imply (by the lemma below) that $f'_n(t)\to f_{\infty}'(t)$ for any differentiability
point $t$, and hence by point 3, we conclude $\phi'_n(t)\to \phi_{\infty}'(t)$.
The proof is then completed by the following analysis fact.
\begin{lemma}
Let $\{f_n:n\ge 1\}$, $f_n:[t_1,t_2]\to \reals$ be a sequence of differentiable functions converging 
pointwise to $f_{\infty}$. Assume $f_n=g_n+h_n$ where $g_n,h_n$ are differentiable, $g_n$ is convex and the
$\{h_n'\}$ are equicontinuous on $[t_1,t_2]$ (i.e.\ there exists a non-decreasing function $\delta(\eps)\downarrow 0$
such that $\vert t-t'\vert\le \eps$ implies $\vert h_n'(t)-h'_n(t')\vert\le \delta(\eps)$).
If $f_{\infty}$ is differentiable at $t_0$, then $\lim_{n\to\infty}f'_n(t_0) = f'_{\infty}(t_0)$.
\end{lemma}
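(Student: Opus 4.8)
The plan is to prove this via a classical real-analysis argument, exploiting the convexity of $g_n$ to transfer the derivative convergence from $f_n$ to $f_n'$ at any point where $f_\infty$ is differentiable. The key point is that for convex functions, pointwise convergence automatically upgrades to convergence of one-sided derivatives, and the equicontinuous perturbation $h_n$ does not destroy this.

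First I would fix $t_0 \in (t_1,t_2)$ and a small $\eps > 0$. For the convex part $g_n$, monotonicity of difference quotients gives, for $h > 0$ small enough that $t_0 \pm h \in [t_1,t_2]$,
\begin{align*}
\frac{g_n(t_0) - g_n(t_0-h)}{h} \le g_n'(t_0) \le \frac{g_n(t_0+h) - g_n(t_0)}{h}\,.
\end{align*}
Writing $g_n = f_n - h_n$ and using the mean value theorem on $h_n$, we get $f_n'(t_0) = g_n'(t_0) + h_n'(t_0)$, so that
\begin{align*}
\frac{f_n(t_0) - f_n(t_0-h)}{h} - \big(h_n'(\xi_n^-) - h_n'(t_0)\big) \le f_n'(t_0) \le \frac{f_n(t_0+h) - f_n(t_0)}{h} + \big(h_n'(t_0) - h_n'(\xi_n^+)\big)
\end{align*}
for suitable $\xi_n^\pm$ within distance $h$ of $t_0$. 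By equicontinuity, the two correction terms are bounded in absolute value by $\delta(h)$, uniformly in $n$.

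Next I would take $n \to \infty$ with $h$ fixed. By pointwise convergence $f_n \to f_\infty$, the difference quotients converge to the corresponding difference quotients of $f_\infty$, so
\begin{align*}
\frac{f_\infty(t_0) - f_\infty(t_0-h)}{h} - \delta(h) \le \liminf_{n\to\infty} f_n'(t_0) \le \limsup_{n\to\infty} f_n'(t_0) \le \frac{f_\infty(t_0+h) - f_\infty(t_0)}{h} + \delta(h)\,.
\end{align*}
Finally, letting $h \downarrow 0$ and using that $f_\infty$ is differentiable at $t_0$ (so both difference quotients converge to $f_\infty'(t_0)$) together with $\delta(h) \to 0$, we conclude $\liminf_n f_n'(t_0) \ge f_\infty'(t_0)$ and $\limsup_n f_n'(t_0) \le f_\infty'(t_0)$, i.e. $f_n'(t_0) \to f_\infty'(t_0)$.

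I do not anticipate a serious obstacle here; the only point requiring a little care is the order of limits ($n\to\infty$ first, then $h\downarrow 0$) and making sure the equicontinuity bound $\delta(h)$ is genuinely uniform in $n$ so that it survives the limit in $n$. One should also note that a convex function on an open interval is automatically differentiable except at countably many points and locally Lipschitz, so $g_n'(t_0)$ exists for all but countably many $t_0$; since $h_n$ is $C^1$, this suffices to make $f_n'(t_0) = g_n'(t_0) + h_n'(t_0)$ meaningful at the relevant points, and in the application $f_n$ is genuinely differentiable everywhere anyway.
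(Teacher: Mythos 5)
Your proof is correct and follows essentially the same strategy as the paper: use the monotonicity of convex difference quotients for $g_n$, use the mean value theorem to control the $h_n$ contribution via equicontinuity, pass $n\to\infty$ at fixed increment $h$, then let $h\downarrow 0$. If anything your write-up is slightly more complete: the paper explicitly carries out only the one-sided estimate $\limsup_{n\to\infty}f_n'(t_0)\le f_\infty'(t_0)$ (using only the right difference quotient) and leaves the matching $\liminf$ bound implicit, whereas you spell out both inequalities symmetrically. You also correctly invoke the mean value theorem where the paper nominally cites the intermediate value theorem. Your closing remark about convex functions being differentiable off a countable set is harmless but unnecessary here, since the lemma hypothesizes that $g_n$ and $h_n$ are differentiable.
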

\begin{proof}
By convexity we have, for $\eps>0$
\begin{align}
\frac{1}{\eps}\big[g_n(t_0+\eps)-g_n(t_0)\big]\ge g'_n(t_0)\, .
\end{align}
Further, by the intermediate value theorem and equicontinuity of $h'_n$,
\begin{align}
\frac{1}{\eps}\big[h_n(t_0+\eps)-h_n(t_0)\big]\ge h'_n(t_n(\eps))
\ge h'_n(t_0)-\delta(\eps)\, .
\end{align}
Summing the last two displays, we get
\begin{align}
f_n'(t_0)\le\frac{1}{\eps}\big[f_n(t_0+\eps)-f_n(t_0)\big]+\delta(\eps)\, .
\end{align}
Taking the limit $n\to\infty$ and using the convergence of $f_n$, we get
\begin{align}
\limsup_{n\to\infty}f_n'(t_0)\le\frac{1}{\eps}\big[f_{\infty}(t_0+\eps)-f_\infty(t_0)\big]+\delta(\eps)\, .
\end{align}
Finally taking the limit $\eps\downarrow 0$, and using the differentiability of $f_{\infty}$
at $t_0$: 
\begin{align}
\limsup_{n\to\infty}f_n'(t_0)\le f'_{\infty}(t_0)\, .
\end{align}
\end{proof}

\section{Lemma for the upper bound in Theorem \ref{thm:Main}}
\label{app:UpperBound}

\begin{lemma}\label{lemma:BayesFact}
Let $f^{\sBayes}(x;\pi_{\Theta},q):=\E[\Theta|q\Theta+\sqrt{q} G = x]$ for $(\Theta,G)\sim\pi_{\Theta}\otimes\normal(0,1)$.
Then, for any $q>0$  there exists a constant $C=C(q,\pi_{\Theta})$ such that, for all $x$
\begin{align}
|f^{\sBayes}(x;\pi_{\Theta},q)|\le C(1+|x|)\, .
\end{align}
\end{lemma}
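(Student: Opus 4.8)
The plan is to work directly with the explicit form of the Bayes posterior. Fix $q>0$ and set $m:=x/q$. Completing the square in the Gaussian likelihood of the channel $x=q\Theta+\sqrt{q}\,G$ shows that the posterior law of $\Theta$ given $q\Theta+\sqrt{q}\,G=x$ is the tilted measure
\[
\mu(\de\theta)\;=\;\frac{1}{Z}\,e^{-\frac{q}{2}(\theta-m)^2}\,\pi_{\Theta}(\de\theta)\,,\qquad Z:=\int e^{-\frac{q}{2}(\theta-m)^2}\,\pi_{\Theta}(\de\theta)\,,
\]
and $f^{\sBayes}(x;\pi_{\Theta},q)=\int\theta\,\mu(\de\theta)$. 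Since $\big|\int\theta\,\mu(\de\theta)\big|\le |m|+\int|\theta-m|\,\mu(\de\theta)$, it suffices to prove $\int|\theta-m|\,\mu(\de\theta)\le C'(1+|m|)$ for some $C'=C'(q,\pi_{\Theta})$; substituting $m=x/q$ then gives the stated bound with $C=\max(2/q,C')$.

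First I would lower bound $Z$. Choose $M=M(\pi_{\Theta})<\infty$ with $\pi_{\Theta}([-M,M])\ge \frac{1}{2}$, which is possible since $\pi_{\Theta}$ is a probability measure; restricting the integral defining $Z$ to $[-M,M]$, where $(\theta-m)^2\le (M+|m|)^2$, yields $Z\ge \frac{1}{2}\,e^{-\frac{q}{2}(M+|m|)^2}$. Next I would control the tails of $\mu$: for every $t>0$, restricting to $\{|\theta-m|>t\}$ and using $e^{-\frac{q}{2}(\theta-m)^2}\le e^{-\frac{q}{2}t^2}$ there gives
\[
\mu\big(|\Theta-m|>t\big)\;\le\;\frac{e^{-\frac{q}{2}t^2}}{Z}\;\le\;2\,e^{\frac{q}{2}\big[(M+|m|)^2-t^2\big]}\,.
\]
Finally, by the layer-cake formula $\int|\theta-m|\,\mu(\de\theta)=\int_0^\infty\mu(|\Theta-m|>t)\,\de t$, bounding the integrand by $1$ for $t\le s:=\sqrt{(M+|m|)^2+2/q}$ and by the displayed tail bound for $t\ge s$, and using the elementary facts $t^2-s^2\ge(t-s)^2$ (valid for $t\ge s\ge 0$) and $\sqrt{a^2+b}\le a+\sqrt{b}$ (for $a,b\ge 0$), one obtains $\int|\theta-m|\,\mu(\de\theta)\le s+e^{-1}\sqrt{2\pi/q}\le |m|+C'(q,\pi_{\Theta})$, completing the argument.

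The one genuine difficulty is that $Z$ can be exponentially small, of order $e^{-\frac{q}{2}(|m|+M)^2}$, when $m$ lies far from the bulk of $\pi_{\Theta}$; in particular one \emph{cannot} simply bound the numerator $\int|\theta-m|\,e^{-\frac{q}{2}(\theta-m)^2}\pi_{\Theta}(\de\theta)$ by the constant $\sup_{u}|u|e^{-qu^2/2}$ and divide by $Z$. The resolution is that this exponential smallness of $Z$ is exactly matched by the Gaussian factor $e^{-\frac{q}{2}t^2}$ in the tail bound for $\mu$, so that $\mu$ is effectively concentrated within distance of order $|m|+M$ from $m$; this balance is precisely what produces growth linear in $|x|$. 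It is worth noting that the argument uses nothing about $\pi_{\Theta}$ beyond its being a probability distribution (no moment assumptions are needed), and that $f^{\sBayes}$ is well defined for any such $\pi_{\Theta}$ since $u\mapsto|u|e^{-\frac{q}{2}u^2}$ is bounded, so both $Z$ and the analogous weighted integral are finite.
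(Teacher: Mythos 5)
Your proposal is correct, and it takes a genuinely different route from the paper. The paper's argument passes through Tweedie's formula: it writes $f^{\sBayes}=\phi'$ with $\phi(x)=\log\int e^{\theta x-q\theta^2/2}\,\pi_\Theta(\de\theta)$, exploits convexity of $\phi$ to sandwich $f^{\sBayes}(x)$ between difference quotients $\Delta_q(-2x,-x)$ and $\Delta_q(x,2x)$, and then bounds $\phi$ above by $x^2/(2q)$ and below by $-\tfrac12 q\,\E[\Theta^2]$ via Jensen (after shifting to $\E[\Theta]=0$ and reflecting to $x>0$). Your argument instead represents the posterior explicitly as a tilted measure, lower-bounds the normalizing constant $Z$ by restricting to a compact set of fixed $\pi_\Theta$-mass, derives a Gaussian tail bound for $\mu(|\Theta-m|>t)$, and closes with the layer-cake formula. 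Both hinge on controlling the same log-partition quantity, but the key technical device differs (convexity/difference quotients versus tail bound/layer cake). What your version buys is a slightly more elementary and more general argument: the paper's lower bound on $\phi$ uses $\E[\Theta^2]<\infty$, whereas your proof needs nothing about $\pi_\Theta$ beyond being a probability measure. What the paper's version buys is brevity and a clean identification of $f^{\sBayes}$ as the derivative of a convex function, which is a structurally useful fact in its own right. The computations in your writeup check out: the inequalities $t^2-s^2\ge(t-s)^2$ and $\sqrt{a^2+b}\le a+\sqrt b$ are used correctly, the Gaussian integral gives $e^{-1}\sqrt{2\pi/q}$ as claimed, and the final constant $C=\max(2/q,\,M+\sqrt{2/q}+e^{-1}\sqrt{2\pi/q})$ works.
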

\begin{proof}
If $\pi^{a}_{\Theta}$ is the law of $\Theta+a$, then we have
$f^{\sBayes}(x;\pi_{\Theta},q) = f^{\sBayes}(x+qa;\pi^a_{\Theta},q)$.
Hence, if the claim holds for $\pi^a_{\Theta}$, it holds for $\pi_{\Theta}$
as well. Hence, without loss of generality, we can assume $\E[\Theta]=0$.

Note that $f^{\sBayes}(-x;\pi_{\Theta},q) = -f^{\sBayes}(x;\pi_{-\Theta},q)$
where $\pi_{-\Theta}$ is the law of $-\Theta$. 
Therefore, without loss of generality we can assume $x>0$.
A simple calculation (`Tweedie's formula') yields
\begin{align}
f^{\sBayes}(x;\pi_{\Theta},q) &= \frac{\de\phantom{x}}{\de x}\phi(x;\pi_{\Theta},q)\, ,\\
\phi(x;\pi_{\Theta},q) & :=
\log\Big\{ \int e^{\theta x-q\theta^2/2}\pi_{\Theta}(\de \theta)\Big\}\, .
\end{align}
Notice that $x\mapsto \phi(x;\pi_{\Theta},q)$ is convex, and therefore
\begin{align}\label{eq:BoundDeltaBayes}
&\Delta_q(-2x,-x)\le 
f^{\sBayes}(x;\pi_{\Theta},q) \le \Delta_q(x,2x)\, ,\\
&\Delta_q(x_1,x_2):=\frac{1}{x_2-x_1}\big\{\phi(x_2;\pi_{\Theta},q)-\phi(x_1;\pi_{\Theta},q)
\big\}\, .
\end{align}
We therefore proceed to lower and upper bound $\phi$.
First notice that
\begin{align*}
\phi(x;\pi_{\Theta},q) &= \frac{x^2}{2q}+\log\Big\{ \int e^{-(x-q\theta)^2/2q}\pi_{\Theta}(\de \theta)\Big\}\\
&\le \frac{x^2}{2q}\, ,
\end{align*}
where we used the fact that $\exp(-(x-q\theta)^2/2q)\le 1$. Further, by Jensen's inequality
\begin{align*}
\phi(x;\pi_{\Theta},q)\ge x\E[\Theta]-\frac{1}{2}\E(\Theta^2)q = -\frac{1}{2}\E(\Theta^2)q\, .
\end{align*}
Therefore we obtain, for $x>1$, 
\begin{align*}
f^{\sBayes}(x;\pi_{\Theta},q) \le \Delta_q(x,2x)
&\le \frac{x}{2q} + \frac{1}{2x}\E(\Theta^2)q\\
&\le \frac{x}{2q} + \frac{1}{2}\E(\Theta^2)q\, .
\end{align*}
Since $x\mapsto f^{\sBayes}(x;\pi_{\Theta},q)$ is continuous and therefore bounded on $[0,1]$,
this proves $f^{\sBayes}(x;\pi_{\Theta},q) \le C(1+|x|)$.

In order to prove $f^{\sBayes}(x;\pi_{\Theta},q) \ge -C(1+|x|)$, 
we use the lower bound in Eq.~\eqref{eq:BoundDeltaBayes} and proceed analogously.
\end{proof}

\section{Proof of Lemmas for Proposition~\ref{prop:tree-vs-poly}}
\label{sec:technical-app}

\subsection{Notation}
\label{sec:notation}

We use the convention $\NN = \{0,1,2,\ldots\}$. We often work with 
$\balpha \in \NN^{\Pairs}$ where $\Pairs=\{(i,j) \,:\, \, 1 \le i \le j \le n\}$
and $|\Pairs| = N := n(n+1)/2$. For $\balpha,\bbeta \in \NN^\Pairs$, we use the notation
\[ |\balpha| := \sum_{(i,j)\in\Pairs} \alpha_{ij}, 
 \qquad
 \balpha! := \prod_{(i,j)\in\Pairs} \alpha_{ij}!, \]
\[ \binom{\balpha}{\bbeta} := \prod_{(i,j)\in\Pairs} \binom{\alpha_{ij}}{\beta_{ij}},
  \qquad
  \bY^\balpha := \prod_{(i,j)\in\Pairs} Y_{ij}^{\alpha_{ij}}, \]
\[  (\balpha \symd \bbeta)_{ij} = |\alpha_{ij} - \beta_{ij}|, \qquad
  (\balpha \wedge \bbeta)_{ij} = \min\{\alpha_{ij},\beta_{ij}\}, \qquad
  (\balpha \setminus \bbeta)_{ij} := \max\{0,\alpha_{ij} - \beta_{ij}\}\, . \]
We use $\balpha \le \bbeta$ to mean $\alpha_{ij} \le \beta_{ij}$ for all $i,j$, and 
$\balpha \lneq \bbeta$ to mean $\alpha_{ij} \le \beta_{ij}$ for all $i,j$ with strict 
inequality for some $i,j$. Note that $\balpha \in \NN^\Pairs$ 
can be identified with a multigraph whose vertices are elements of $[n]$, where 
$\alpha_{ij}$ is the number of copies of edge $(i,j)$. We use
 $V(\balpha) \subseteq [n]$ to denote the set of vertices spanned by the edges of
  $\balpha$, with vertex 1 (the root) always included by convention.
   We use $C(\balpha)$ to denote the set of non-empty (i.e., containing at least one edge) 
   connected components of $\balpha$.

Asymptotic notation such as $O(\;\cdot\;)$ and $\Omega(\;\cdot\;)$ pertains to the limit 
$n \to \infty$ and may hide factors depending on $\pi_{\Theta}, \psi, D$. We use the symbol 
$\const$ to denote a constant (which may be positive, zero, or negative) depending on 
$\pi_{\Theta}, \psi, D$ (but not $n$) and e.g., $\const(A,B)$ to denote a constant that may
 additionally depend on $A,B$. These constants may change from line to line.


\subsection{Hermite polynomials}
\label{sec:hermite}

We will need the following well known property of Hermite polynomials (see e.g.\ page 254 of~\cite{special-functions}).

\begin{proposition}
For any $\mu, z \in \RR$ and $k \in \NN$,
\[ h_k(\mu+z) = \sum_{\ell=0}^k \sqrt{\frac{\ell!}{k!}} \binom{k}{\ell} \mu^{k-\ell} h_\ell(z) \, . \]
\end{proposition}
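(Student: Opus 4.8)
The plan is to deduce this classical addition formula from the exponential generating function of the orthonormal Hermite polynomials. Recall that, with the normalization $\E[h_j(Z)h_k(Z)] = \One_{j=k}$ for $Z\sim\normal(0,1)$, one has
\[
\sum_{k\ge 0}\frac{t^k}{\sqrt{k!}}\,h_k(x) \;=\; \exp\!\Big(xt-\frac{t^2}{2}\Big),
\]
valid for all $x,t\in\RR$, the left-hand side being an entire (in particular absolutely convergent) power series in $t$ for each fixed $x$. I would first record this identity, either by citing~\cite{orthog-poly,special-functions} or by noting that $h_k = He_k/\sqrt{k!}$ with $He_k$ the probabilists' Hermite polynomials, for which $\sum_k He_k(x)\,t^k/k! = e^{xt-t^2/2}$ is standard (so that $\sum_k h_k(x)t^k/\sqrt{k!} = \sum_k He_k(x)t^k/k!$).

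The core of the argument is then a single factorization. Substituting $x=\mu+z$ and using $e^{(\mu+z)t-t^2/2}=e^{\mu t}\cdot e^{zt-t^2/2}$ gives
\[
\sum_{k\ge 0}\frac{t^k}{\sqrt{k!}}\,h_k(\mu+z) \;=\; \Big(\sum_{m\ge 0}\frac{\mu^m t^m}{m!}\Big)\Big(\sum_{\ell\ge 0}\frac{t^\ell}{\sqrt{\ell!}}\,h_\ell(z)\Big).
\]
Forming the Cauchy product on the right (legitimate since both series converge absolutely) and matching the coefficient of $t^k$ on both sides yields
\[
\frac{h_k(\mu+z)}{\sqrt{k!}} \;=\; \sum_{\ell=0}^k\frac{\mu^{k-\ell}}{(k-\ell)!}\cdot\frac{h_\ell(z)}{\sqrt{\ell!}}\,.
\]
It remains only to rearrange the scalar prefactor: writing $\binom{k}{\ell}=\frac{k!}{\ell!\,(k-\ell)!}$ one checks that $\frac{\sqrt{k!}}{(k-\ell)!\,\sqrt{\ell!}}=\sqrt{\frac{\ell!}{k!}}\binom{k}{\ell}$, so multiplying the previous display through by $\sqrt{k!}$ gives exactly the claimed identity.

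There is essentially no obstacle here, since the statement is classical; the only points needing a little care are the justification that the generating-function identity may be manipulated as an equality of convergent power series (so that the Cauchy product and coefficient extraction are valid) and the routine bookkeeping of factorials in the final step. As an alternative to generating functions, the identity can also be established by induction on $k$ from the three-term recurrence $\sqrt{k+1}\,h_{k+1}(x)=x\,h_k(x)-\sqrt{k}\,h_{k-1}(x)$, substituting $x=\mu+z$ and reorganizing the sum; I would present the generating-function argument as the primary proof since it is shorter and more transparent.
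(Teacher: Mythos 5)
Your proof is correct. The paper does not actually prove this proposition; it simply cites it as a well-known identity (page 254 of the Magnus--Oberhettinger--Soni reference). Your generating-function argument is the standard classical derivation, and the factorial bookkeeping in the last step checks out, so this is a valid self-contained proof of the cited fact.
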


\noindent As a result, for any $\bgamma \in \NN^\Pairs$, and writing $\bX=\btheta\btheta^{\sT}/\sqrt{n}$
for the signal matrix, we have:
\begin{align*}
h_{\bgamma}(\bY) &= \prod_{(i,j)\in\Pairs} h_{\gamma_{ij}}(X_{ij} + Z_{ij}) \\
&= \prod_{(i,j)\in\Pairs} \sum_{\beta_{ij}=0}^{\gamma_{ij}} \sqrt{\frac{\beta_{ij}!}{\gamma_{ij}!}} \binom{\gamma_{ij}}{\beta_{ij}} X_{ij}^{\gamma_{ij}-\beta_{ij}} h_{\beta_{ij}}(Z_{ij}) \\
&= \sum_{\bzero \le \bbeta \le \bgamma} \prod_{(i, j)\in\Pairs} \sqrt{\frac{\beta_{ij}!}{\gamma_{ij}!}} \binom{\gamma_{ij}}{\beta_{ij}} X_{ij}^{\gamma_{ij}-\beta_{ij}} h_{\beta_{ij}}(Z_{ij}) \\
&= \sum_{\bzero \le \bbeta \le \bgamma} \sqrt{\frac{\bbeta!}{\bgamma!}} \binom{\bgamma}{\bbeta} 
\bX^{\bgamma-\bbeta} h_\bbeta(\bZ) \, .
\end{align*}
In particular, since $\EE_{\bZ}[h_\bbeta(\bZ)] = \One_{\bbeta = \bzero}$ 
(due to orthonormality of Hermite polynomials and the fact $h_0(z) = 1$),
\[ \EE h_\bgamma(\bY) = \frac{1}{\sqrt{\bgamma!}} \EE \bX^\bgamma \, . \]

\subsection{Proof of Lemma~\ref{lem:q-sym}: Symmetry}

As mentioned in the main text, this is a special case of the classical Hunt--Stein theorem
\cite{eaton2021charles}, the only difference being that we are restricting the class of estimators to degree-$D$ polynomials. We present a proof for completeness.

Let $S_{-1}$ denote the group of permutations on $[n]$ that fix $\{1\}$. This group acts 
on the space of $n \times n$ symmetric matrices by permuting both the rows and columns 
(by the same permutation). We also have an induced action of $S_{-1}$ on $\RR[\bY]_{\le D}$ given 
by $(\sigma \cdot q)(\bY) = q(\sigma^{-1} \cdot \bY)$. A polynomial $q$ can be symmetrized 
over $S_{-1}$ to produce $q^\sym$ as follows:
\[ q^\sym := \frac{1}{|S_{-1}|} \sum_{\sigma \in S_{-1}} \sigma \cdot q\,. \]

We claim that the symmetric subspace $\RR[\bY]_{\le D}^\sym = 
\mathrm{span}_\RR\{\PolH_A \,:\, \cG_{\le D}\}$ is precisely the image of the 
above symmetrization operation, that is, $\RR[\bY]_{\le D}^\sym = \{q^\sym \,:\, q \in 
\RR[\bY]_{\le D}\}$. This can be seen as follows. For the inclusion $\subseteq$, 
note that any $q \in \RR[\bY]_{\le D}^\sym$ satisfies $q = q^\sym$. For the reverse 
inclusion $\supseteq$, start with an arbitrary $q \in \RR[\bY]_{\le D}$. Any such $q$ 
admits an expansion as a linear combination of Hermite polynomials 
$(h_\balpha)_{|\balpha| \le D}$, and therefore admits an expansion in 
$(\PolH_\balpha)_{|\balpha| \le D}$ (cf.\ the definition~\eqref{eq:H_alpha}
which can be inverted recursively). Finally, note that $\PolH_\balpha^\sym$ is a 
scalar multiple of $\PolH_{\Graph}$ for a certain $\Graph \in \cG_{\le D}$. This 
allows $q^\sym$ to be written as a linear combination of $(\PolH_\Graph)_{\Graph \in \cG_{\le D}}$.

To complete the proof of Lemma~\ref{lem:q-sym}, it is sufficient to show that for any 
low degree estimator $q\in \RR[\bY]_{\le D}$, mean squared error does not increase under symmetrization:
\begin{equation}\label{eq:sym-improve}
\EE[(q^\sym(\bY) - \psi(\theta_1))^2] \le
\EE[(q(\bY) - \psi(\theta_1))^2]\,.
\end{equation}
First note that $\EE[q^\sym(\bY) \cdot \psi(\theta_1)] 
  = \EE[q(\bY) \cdot \psi(\theta_1)]$ because $(\bY,\theta_1)$ and $(\sigma \cdot \bY,\theta_1)$
have the same distribution for any fixed $\sigma \in S_{-1}$.
Next, using Jensen's inequality and the symmetry $\EE[q^2] = 
   \EE[(\sigma \cdot q)^2]$ for all $\sigma \in S_{-1}$,
 we have
 \begin{align*}
 \EE[q^\sym(\bY)^2] &=\EE\Big\{\Big(\frac{1}{|S_{-1}|} \sum_{\sigma \in S_{-1}}
 (\sigma \cdot q)(\bY)\Big)^2\Big\}\\
 &\le 
 \frac{1}{|S_{-1}|} \sum_{\sigma \in S_{-1}} \EE[(\sigma \cdot q)(\bY)^2] = \EE[q(\bY)^2]\,.
 \end{align*}
Now Eq.~\eqref{eq:sym-improve} follows by combining these claims.

\subsection{Proof of Lemma~\ref{lem:M-pd}: $\lambda_{\mathrm{min}}(\bM_n)$}

Consider an arbitrary polynomial 
$f = \sum_{\Graph \in \cG_{\le D}} b_{\Graph}(f) \PolH_{\Graph}$ with coefficients 
$\bb(f) =(b_\Graph(f))$ normalized so that $\|\bb(f)\|^2 := \sum_{\Graph \in \cG_{\le D}} b_\Graph(f)^2 = 1$. This induces an expansion $f = \sum_{\balpha \in \NN^\Pairs, |\balpha| \le D} \hat{f}_\balpha \PolH_\balpha$.
Since $\EE[f(\bY)^2] = \<\bb, \bM_n \bb\>$, it suffices to show 
$\EE[f(\bY)^2] = \Omega(1)$. 
Using Jensen's inequality and orthonormality of Hermite polynomials
(here, we use subscripts on expectation to denote which variable is being integrated over),
\begin{align*}
\EE[f(\bY)^2] = \EE_Z \EE_X [f(\bX+\bZ)^2] \ge \EE_Z[(\EE_X f(\bX+\bZ))^2] =: \|\hat{g}\|^2
\end{align*}
where $g(\bZ) := \EE_\bX f(\bX+\bZ)$ with Hermite expansion $g = \sum_{\balpha \in \NN^\Pairs, |\balpha| \le D} \hat{g}_\balpha \PolH_\balpha$.
It now suffices to show $\|\hat g\|^2 = \Omega(1)$. 
We will compute $\hat{g}$ explicitly in terms of $\hat{f}$. 
We have, using the Hermite facts from Section~\ref{sec:hermite},
\begin{align*}
g(\bZ) &= \EE_X f(\bX+\bZ) \\
&= \EE_X \sum_{\balpha\in \NN^\Pairs} \hat{f}_\balpha \PolH_\balpha(\bX+\bZ) \\
&= \sum_{\balpha\in \NN^\Pairs} \hat{f}_\balpha \EE_X \prod_{\gamma \in C(\balpha)} \left(h_\bgamma(\bX+\bZ) - \EE_Y h_\bgamma(\bY)\right) \\
&= \sum_{\balpha\in \NN^\Pairs} \hat{f}_\balpha \prod_{\bgamma \in C(\balpha)} \EE_X[h_\bgamma(\bX+\bZ) - \EE_Y h_\bgamma(\bY)] \qquad\text{using independence between components} \\
&= \sum_{\balpha\in \NN^\Pairs} \hat{f}_\balpha \prod_{\bgamma \in C(\balpha)} \EE_X\left[\sum_{0 \le \bbeta \le \bgamma} \sqrt{\frac{\bbeta!}{\bgamma!}}\binom{\bgamma}{\bbeta}X^{\bgamma-\bbeta}h_\bbeta(\bZ) - \frac{1}{\sqrt{\bgamma!}} \bX^\bgamma\right] \\
&= \sum_{\balpha\in \NN^\Pairs} \hat{f}_\balpha \prod_{\bgamma \in C(\balpha)} \sum_{0 \lneq \bbeta \le \bgamma} \sqrt{\frac{\bbeta!}{\bgamma!}}\binom{\bgamma}{\bbeta}\EE[\bX^{\bgamma-\bbeta}]h_\bbeta(\bZ) \\
&= \sum_{\balpha\in \NN^\Pairs} \hat{f}_\balpha \sum_{\bbeta \in \Gamma(\balpha)} \sqrt{\frac{\bbeta!}{\balpha!}}\binom{\balpha}{\bbeta}\EE[\bX^{\balpha-\bbeta}]h_\bbeta(\bZ)
\intertext{where $\Gamma(\balpha)$ is the set of $\bbeta \in \NN^\Pairs$ such that $0 \le \bbeta \le \balpha$, and $\bbeta$ includes at least one edge from every non-empty component of $\balpha$}
&= \sum_{\bbeta\in \NN^\Pairs} h_\bbeta(\bZ) \sum_{\balpha \,:\, \bbeta \in \Gamma(\balpha)} \hat{f}_\balpha \sqrt{\frac{\bbeta!}{\balpha!}}\binom{\balpha}{\bbeta}\EE[\bX^{\balpha-\bbeta}]\,.
\end{align*}
This means
\begin{equation}\label{eq:fhat-ghat}
\hat{g}_\bbeta = \sum_{\balpha \,:\, \bbeta \in \Gamma(\balpha)} \hat{f}_\balpha \sqrt{\frac{\bbeta!}{\balpha!}}\binom{\balpha}{\bbeta}\EE[\bX^{\balpha-\bbeta}]\,.
\end{equation}

Due to the symmetry of $f$, we have $\hat{g}_\bbeta = \hat{g}_{\bbeta'}$ whenever $\bbeta, \bbeta'$ are 
images of embeddings of the same $\Graph \in \cG_{\le D}$. Therefore, $g$ admits an expansion
 $g = \sum_{\Graph \in \cG_{\le D}} b_\Graph(g) \PolH_\Graph$ where
 $\PolH_\Graph$ is defined by Eq.~\eqref{eq:H_A}.

The coefficients $b_{\Graph}(g)$ and $\hat g_{\balpha}$ are related as follows. Let 
$|\Aut(\Graph)|$ denote the number of root-preserving automorphisms of 
$\Graph$, i.e., the number of embeddings $\phi \in \emb(\Graph)$ that 
induce a single element  $\balpha\in \NN^\Pairs$. For each $\Graph \in \cG_{\le D}$ 
there are $\frac{|\emb(\Graph)|}{|\Aut(\Graph)|}$ distinct elements 
$\balpha \in \NN^\Pairs$, and each has the same coefficient $\hat g_\balpha$.
Without loss of generality we can set $\hat g_\balpha=:|\Aut(\Graph)|/\sqrt{|\emb(\Graph)|}\,\tilde g_\Graph$ for some coefficients $\tilde g_\Graph$.
Therefore
\begin{align*}
g &= \sum_{\balpha\in\NN^{\Pairs}}\hat g_\balpha \PolH_{\balpha}  \\
& = \sum_{\Graph\in\cG_{\le D}}
\sum_{\balpha\in\NN^{\Pairs}}\hat g_\balpha \PolH_{\balpha}\cdot\frac{1}{|\Aut(\Graph)|}
\sum_{\phi\in\emb(\Graph)} \One_{\balpha=\sh(\phi;\Graph)}\\
& =  \sum_{\Graph\in\cG_{\le D}}\sum_{\phi\in\emb(\Graph)}
\sum_{\balpha\in\NN^{\Pairs}}\One_{\balpha=\sh(\phi;\Graph)}\frac{1}{\sqrt{|\emb(\Graph)|}} \tilde g_\Graph\PolH_{\balpha}\\
& =  \sum_{\Graph\in\cG_{\le D}}  \tilde g_\Graph\PolH_{\Graph}\, .
\end{align*}
Therefore we can conclude that $b_{\Graph}(g)=\tilde g_\Graph=\frac{\sqrt{|\emb(\Graph)|}}{|\Aut(\Graph)|}
 \hat g_\balpha$. This means
\begin{align*}
\|\hat g\|^2 = \sum_{\balpha \in \NN^\Pairs, |\balpha| \le D} \hat g_\balpha^2 &= 
\sum_{\Graph \in \cG_{\le D}} \frac{|\emb(\Graph)|}{|\Aut(\Graph)|} 
\left(\frac{|\Aut(\Graph)|}{\sqrt{|\emb(\Graph)|}}
 b_\Graph(g)\right)^2 \\
&= \sum_{\Graph \in \cG_{\le D}} |\Aut(\Graph)| \cdot b_{\Graph}(g)^2 \ge \|\bb(g) \|^2 \, .
\end{align*}
It now suffices to show $\|\bb(g)\|^2 = \Omega(1)$.

Our next step will be to relate the coefficients $\bb(g)$ to the coefficients $\bb(f)$. 
Using~\eqref{eq:fhat-ghat} along with the above relation between $\tilde g$ and $\hat g$, 
we have for any $B \in \cG_{\le D}$ and $\bbeta$ the image of some $\phi \in \emb(B)$,
\begin{equation}\label{eq:gtilde-fhat}
b_B(g) = \frac{\sqrt{|\emb(B)|}}{|\Aut(B)|} \hat g_\bbeta = \frac{\sqrt{|\emb(B)|}}{|\Aut(B)|} 
\sum_{\balpha \,:\, \bbeta \in \Gamma(\balpha)} \hat{f}_\balpha 
\sqrt{\frac{\bbeta!}{\balpha!}}\binom{\balpha}{\bbeta}\EE[\bX^{\balpha-\bbeta}]\,.
\end{equation}
Similarly to above, we have $\hat f_\balpha = \frac{|\Aut(A)|}{\sqrt{|\emb(A)|}} b_{A}(f)$ 
whenever $\balpha$ is the image of some $\phi \in \emb(A)$. In~\eqref{eq:gtilde-fhat}, 
the number of terms $\balpha \in \NN^{\Pairs}$ in the sum that correspond to a single 
$A \in \cG_{\le D}$ is $O(n^{|V(A)| - |V(B)|})$, recalling that 
$\bbeta \in \Gamma(\balpha)$ implies $\bbeta \le \balpha$. We also have the bounds
 $\sqrt{\frac{\bbeta!}{\balpha!}}\binom{\balpha}{\bbeta} = O(1)$, $\EE[\bX^{\balpha-\bbeta}] = 
 O(n^{-\frac{1}{2}(|E(A)| - |E(B)|)})$, $|\Aut(A)| = \Theta(1)$, and $|\emb(A)| = 
 \Theta(n^{|V(A)|-1})$. This means~\eqref{eq:gtilde-fhat} can be written as 
 $b_B(g) = \sum_A \zeta_{BA} b_A(f)$ for some coefficients $\zeta_{BA}$ (not depending on $f,g$)
  that satisfy
\begin{align*}
|\zeta_{BA}| &\le \sqrt{\frac{|\emb(B)|}{|\emb(A)|}} \cdot O(n^{|V(A)| - |V(B)|} \cdot n^{-\frac{1}{2}(|E(A)| - |E(B)|)}) \\
&=  O(n^{\frac{1}{2}(|V(A)| - |V(B)| - |E(A)| + |E(B)|)}) \\
&= O(1)\, ,
\end{align*}
where the last bound holds since $\bbeta \in \Gamma(\balpha)$ and
therefore $\balpha$ cannot have more components than $\bbeta$.

We have now shown $|\zeta_{BA}| = O(1)$. We can also see directly that $\zeta_{AA} = 1$ for all $A$, and $\zeta_{BA} = 0$ whenever $B \ne A$ and $|E(B)| \ge |E(A)|$.
This means
\[ b_B(g) = b_B(f) + \sum_{A \,:\, |E(A)| > |E(B)|} \zeta_{BA} b_A(f) \]
which we can rewrite in vector form as
\[ \bb(g) = \bzeta\,  \bb(f)\, , \] 
where $\bzeta= (\zeta_{A,B})_{A,B\in\cG_{\le D}}$. We order $\cG_{\le D}$ so that
the number of edges is non-decreasing along this ordering, and therefore $\bzeta$
is upper triangular with ones on the diagonal.
This implies $\det(\bzeta)=1$, and in particular $\bzeta$ is invertible. 
By Cramer's rule, letting $\bzeta_{(B,A)}$ denote the $(B,A)$-th minor,
\begin{align*}
(\bzeta^{-1})_{A,B} = (-1)^{i+j}\frac{\det(\bzeta_{(B,A)})}{\det(\bzeta)} = 
 (-1)^{i+j}\det(\bzeta_{(B,A)}) = O(1)\,,
 \end{align*}
where the last bound follows from the fact that the matrix dimensions are independent of $n$, and
$\max_{A,B\in\cG_{\le D}}|\zeta_{A,B}| = O(1)$. Therefore $\|\bzeta^{-1}\|_{\op} = O(1)$ and
\begin{align*}
\|\bb(g)\|_2 \ge  \|\bzeta^{-1}\|_{\op}^{-1}\,  \|\bb(f)\|_2\ge  \|\bzeta^{-1}\|_{\op}^{-1} = \Omega(1)\,.
\end{align*}
This concludes the proof.

\subsection{Proof of Lemma~\ref{lem:q-formula}: Explicit solution}

Throughout this proof, we will omit the subscript $n$ from $\bc$ and $\bM$.

For an arbitrary $q \in \RR[\bY]_{\le D}^\sym$, write the expansion $q = \sum_{A \in \cG_{\le D}} \hat{q}_{A} \PolH_{A}$. Recalling the definitions $c_{A} = \EE[\PolH_A(\bY) \cdot \psi(\theta_1)]$ and $M_{AB} = \EE[\PolH_A(\bY) \PolH_B(\bY)]$, we can rewrite the optimization problem as
\begin{align*}
\inf_{q \in \RR[\bY]^\sym_{\le D}} \EE[(q(\bY) - \psi(\theta_1))^2] &= \EE[\psi(\theta_1)^2] - \sup_{q \in \RR[\bY]^\sym_{\le D}} \EE[2 \, q(\bY) \cdot \psi(\theta_1) - q(\bY)^2] \\
&= \EE[\psi(\theta_1)^2] - \sup_{\hbq} \left(2 \langle \hbq, \bc \rangle - \langle \hbq, \bM \hbq \rangle \right).
\end{align*}
Since $\bM$ is positive definite by Lemma~\ref{lem:M-pd}, the map $\hbq \mapsto 2 \langle \hbq, \bc \rangle - \langle \hbq, \bM \hbq \rangle$ is strictly concave and is thus uniquely maximized at its unique stationary point $\hbq = \bM^{-1} \bc$.


\subsection{Proof of Lemma~\ref{lem:c-lim}: Limit of $\bc_n$}

Throughout this proof, we will omit the subscript $n$ from $\bc$ and its entries.

Our goal is to compute, for each $A\in\cG_{\le D}$,
\[ c_A = \EE[\PolH_A(\bY) \cdot \psi(\theta_1)] = 
\frac{1}{\sqrt{|\emb(A)|}} \sum_{\phi \in \emb(A)} \EE[\PolH_{\sh(\phi;A)}(\bY) \cdot \psi(\theta_1)]
 = \sqrt{|\emb(A)|} \EE[\PolH_\balpha(Y) \cdot \psi(\theta_1)] \, ,
 \]
where in the last step, $\balpha$ is the image of $A$ under  an arbitrary embedding $\phi$ 
and we have used the fact that $\EE[\PolH_\balpha(Y) \cdot \psi(\theta_1)]$ depends only
 on $A$ (not $\balpha$).
We have
\begin{equation}\label{eq:emb}
|\emb(A)| = \binom{n-1}{|V(A)|-1} (|V(A)|-1)! = n^{|V(A)|-1} (1 + O(n^{-1})) \, .
\end{equation}
For $\balpha$ the image of an embedding of $A$, we have by definition,
\[ \EE[\PolH_\balpha(\bY) \cdot \psi(\theta_1)] =
\EE\Big\{ \psi(\theta_1)\prod_{\bgamma \in C(\balpha)} (h_\bgamma(\bY) - \EE h_\gamma(\bY))
\Big\} \, . \]
If $A = \emptyset$ (the edgeless graph), we can see $c_A = \EE[\psi(\theta_1)]$ 
is a constant and we are done. If $A$ has a non-empty connected component 
that does not contain the root then we have $\EE[\PolH_\balpha(\bY) \cdot \psi(\theta_1)] = 0$ due to independence between components, and again we are done.

Finally, consider the case in which $A$ has a single non-empty component and this 
component contains the root. In this case, simplifying the above using the Hermite facts from Section~\ref{sec:hermite}, and recalling that $\bX:=\btheta\btheta^{\sT}/\sqrt{n}$,
\begin{align*}
&\EE[\PolH_\balpha(Y) \cdot \psi(\theta_1)] = \EE\big\{ \PolH(\theta_1) (h_\balpha(\bY) - \EE h_\balpha(\bY)) 
\big\}\\
&\qquad= \EE \left\{\psi(\theta_1) \left(\sum_{\bzero \le \balpha' \le \balpha}\sqrt{\frac{\balpha'!}{\balpha!}}\binom{\balpha}{\balpha'} X^{\balpha-\balpha'} h_{\balpha'}(\bZ) - \frac{1}{\sqrt{\balpha!}}\EE[\bX^\balpha]\right) \right\}\\
&\qquad= \EE \left\{\psi(\theta_1) \left(\sum_{\bzero \le \balpha' \le \balpha}\sqrt{\frac{\balpha'!}{\balpha!}}\binom{\balpha}{\balpha'} n^{-\frac{1}{2}|\balpha-\balpha'|} (\btheta\btheta^\sT)^{\balpha-\balpha'} h_{\balpha'}(\bZ) - \frac{1}{\sqrt{\balpha!}} n^{-\frac{1}{2}|\balpha|} \EE[(\btheta\btheta^\sT)^\balpha]\right)\right\}.
\end{align*}
Using orthogonality of the Hermite polynomials and recalling $h_0(z) = 1$, all the terms with $\balpha' \ne \bzero$ 
are zero in expectation, i.e.,
\begin{align*}
\EE[\PolH_\balpha(Y) \cdot \psi(\theta_1)] &= \EE\left\{\psi(\theta_1) \frac{n^{-\frac{1}{2}|\balpha|}}{\sqrt{\balpha!}} \left((\btheta\btheta^{\sT})^\balpha - \EE [(\btheta\btheta^{\sT})^\balpha]\right) \right\}\\
&= \const(A) \cdot n^{-\frac{1}{2}|\balpha|} = \const(A) \cdot n^{-\frac{1}{2}|E(A)|} \, .
\end{align*}

Putting it all together, we get
\[ c_A = \const(A) \cdot n^{\frac{1}{2}(|V(A)| - 1 - |E(A)|)} \cdot\big(1+O(n^{-1})\big)\, . \]
Recall from above that $c_A = 0$ unless $A$ has a single connected component and this
 component contains the root; we therefore restrict to $A$ of this type in what follows. 
 Due to connectivity, any such $A$ has $|V(A)| \le |E(A)| + 1$, implying $c_A = \const(A) + O(n^{-1})$
  as desired. Now suppose furthermore that $A \in \cG_{\le D} \setminus \cT_{\le D}$, i.e.,
   $A$ contains a cycle (where we consider a self-loop or double-edge to be a cycle). 
   In this case, $|V(A)| < |E(A)| + 1$, implying $c_A = O(n^{-1/2})$ as desired.

\subsection{Proof of Lemma~\ref{lem:M-lim}: Limit of $\bM_n$}
\label{sec:pf-M-lim}

Throughout this proof, we will omit the subscript $n$ from $\bM$.

We will need to reason about the different possible intersection patterns between two rooted graphs 
$A, B \in \cG_{\le D}$. To this end, define a \emph{pattern} 
to be a rooted colored multigraph where no vertices are isolated 
except possibly the root, every edge is colored either red or blue, 
the edge-induced subgraph of red edges (with the root always included) is isomorphic 
(as a rooted graph) to $A$, and the edge-induced subgraph of blue edges is isomorphic to $B$. 
Let $\pat(A,B)$ denote the set of such patterns, up to isomorphism of rooted colored graphs.
The number of such patterns is a constant depending on $A,B$. For $\Pi \in \pat(A,B)$, 
let $\emb(\Pi)$ denote the set of embeddings of $\Pi$, where an embedding is defined 
to be a pair $(\phi_A,\phi_B)$ with $\phi_A \in \emb(A), \phi_B \in \emb(B)$ such that the induced 
colored graph on vertex set $[n]$ is isomorphic to $\Pi$. We let 
$|V(\Pi)|$ denote the number of vertices in the pattern (including the root).

We can write $M_{AB} = \EE[\PolH_A(\bY) \PolH_B(\bY)]$ as 
\begin{align*}
M_{AB}  &= \frac{1}{\sqrt{|\emb(A)| \cdot |\emb(B)|}} 
\sum_{\phi_A \in \emb(A), \phi_B \in \emb(B)} \EE[\PolH_{\sh(\phi_A;A)}(\bY) 
\PolH_{\sh(\phi_B;B)}(\bY)] \\
&= \frac{1}{\sqrt{|\emb(A)| \cdot |\emb(B)|}} \sum_{\Pi \in \pat(A,B)}
 \sum_{(\phi_A,\phi_B) \in \emb(\Pi)} \EE[\PolH_{\sh(\phi_A;A)}(\bY) \PolH_{\sh(\phi_B;B)}(\bY)] \\
&= \frac{1}{\sqrt{|\emb(A)| \cdot |\emb(B)|}} \sum_{\Pi \in \pat(A,B)} |\emb(\Pi)| 
\cdot \EE[\PolH_\balpha(Y) \PolH_\bbeta(\bY)] \, ,
\end{align*}
where in the last line, $(\balpha,\bbeta) = (\sh(\phi_A;A),\sh(\phi_B;B))$ is the shadow of an arbitrary 
embedding $(\phi_A,\phi_B) \in \emb(\Pi)$, and we have used the fact that
 $\EE[\PolH_\balpha(\bY) \PolH_\bbeta(\bY)]$ depends only on $\Pi$ (not $\balpha,\bbeta$). 
 Recall from~\eqref{eq:emb} that
\[ |\emb(A)| = n^{|V(A)|-1} (1 + O(n^{-1})) \, , \]
and we similarly have
\[ |\emb(\Pi)| = \const(\Pi) \cdot \binom{n-1}{|V(\Pi)|-1} = \const(\Pi) \cdot n^{|V(\Pi)|-1} (1 + O(n^{-1})) \, . \]
Now compute using the Hermite facts from Section~\ref{sec:hermite},
 and recalling that $\bX:=\btheta\btheta^{\sT}/\sqrt{n}$,
\begin{align*}
\EE[\PolH_\balpha(\bY)&\PolH_\bbeta(\bY)] = 
\EE \left(\prod_{\bgamma \in C(\balpha)} \left(h_\bgamma(\bY) - \EE h_\bgamma(\bY)\right)\right)
 \left(\prod_{\bdelta \in C(\bbeta)} \left(h_\bdelta(\bY) - \EE h_\bdelta(\bY)\right)\right) \\
&= \EE \left(\prod_{\bgamma \in C(\balpha)} \left(\sum_{\bzero \le \bgamma' \le \bgamma}\sqrt{\frac{\bgamma'!}{\bgamma!}}\binom{\bgamma}{\bgamma'} \bX^{\bgamma-\bgamma'} h_{\bgamma'}(\bZ) - \frac{1}{\sqrt{\bgamma!}}\EE[\bX^\bgamma]\right)\right) \cdot \\
&\qquad\qquad \left(\prod_{\bdelta \in C(\bbeta)}\left(\sum_{\bzero \le \bdelta' \le \bdelta}\sqrt{\frac{\bdelta'!}{\bdelta!}}\binom{\bdelta}{\bdelta'} \bX^{\bdelta-\bdelta'} h_{\bdelta'}(\bZ) - \frac{1}{\sqrt{\bdelta!}}\EE[\bX^\bdelta]\right)\right) \\
&= \EE \left(\prod_{\bgamma \in C(\balpha)} \left(\sum_{\bzero \le \bgamma' \le \bgamma}\sqrt{\frac{\bgamma'!}{\bgamma!}}\binom{\bgamma}{\bgamma'} n^{-\frac{1}{2}|\bgamma-\bgamma'|} (\btheta\btheta^\sT)^{\bgamma-\bgamma'} h_{\bgamma'}(\bZ) - \frac{1}{\sqrt{\bgamma!}} n^{-\frac{1}{2}|\bgamma|} \EE[(\btheta\btheta^\sT)^\bgamma]\right)\right) \cdot \\
&\qquad\qquad \left(\prod_{\bdelta \in C(\bbeta)}\left(\sum_{\bzero \le \bdelta' \le \bdelta}\sqrt{\frac{\bdelta'!}{\bdelta!}}\binom{\bdelta}{\bdelta'} n^{-\frac{1}{2}|\bdelta-\bdelta'|} (\btheta\btheta^\sT)^{\bdelta-\bdelta'} h_{\bdelta'}(\bZ) - \frac{1}{\sqrt{\bdelta!}} n^{-\frac{1}{2}|\bdelta|} \EE[(\btheta\btheta^\sT)^\bdelta]\right)\right).
\end{align*}
If $\balpha$ has a non-empty (i.e., containing at least one edge) connected component that is vertex-disjoint from all non-empty 
connected components of $\bbeta$ or vice-versa, then we have $\EE[\PolH_\balpha(\bY) \PolH_\bbeta(\bY)] = 0$ from the first line above (due to independence between components). 
Otherwise we say that $\Pi$ has ``no isolated components'' and denote by $\pat^*(A,B)$ the set of 
patterns with this property. For $\Pi \in \pat^*(A,B)$, using orthogonality of the Hermite 
polynomials, the expansion of the expression above has a unique leading (in $n$) 
term where $\bgamma', \bdelta'$ are as large as possible: namely, 
$\bgamma' = \bgamma \wedge \bbeta$ and $\bdelta' = \bdelta \wedge \balpha$ where $\wedge$ 
denotes entrywise minimum between elements of $\NN^\Pairs$. 
Therefore there exists $\const(\Pi) \in \RR$ independent of $n$ such that
\begin{align*}
\EE[\PolH_\balpha(\bY) \PolH_\bbeta(\bY)] &= \big(\const(\Pi) +O(n^{-1/2})\big)
\cdot n^{-\frac{1}{2}|\balpha - (\balpha \wedge \bbeta)| - \frac{1}{2}|\bbeta - (\balpha \wedge \bbeta)|} \\
&= \big(\const(\Pi) +O(n^{-1/2})\big)\cdot n^{-\frac{1}{2}|\balpha \symd \bbeta|}\, ,
\end{align*}
where the symmetric difference is defined as $(\balpha \symd \bbeta)_{ij} := |\alpha_{ij} - \beta_{ij}|$.

Putting it all together, and recalling $\EE[\PolH_\balpha(\bY) \PolH_\bbeta(\bY)] = 0$ whenever 
$\Pi \in \pat(A,B) \setminus \pat^*(A,B)$ (and eventually redefining  $\const(\Pi)$)
\begin{align*}
M_{AB} &=  n^{-\frac{1}{2}(|V(A)|-1)-\frac{1}{2}(|V(B)|-1)} \sum_{\Pi \in \pat^*(A,B)}
 \big(\const(\Pi)+O(n^{-1/2})\big) \cdot n^{(|V(\Pi)|-1) - \frac{1}{2}|\alpha \symd \beta|}  \\
&=\big(\const(A,B)+O(n^{-1/2})\big) \cdot n^{\varphi(A,B)}
\end{align*}
where
\begin{align*}
\varphi(A,B) :=& -\frac{1}{2}(|V(A)| + |V(B)|) + \max_{\Pi \in \pat^*(A,B)}
\Big( |V(\Pi)| - \frac{1}{2}|\balpha \symd \bbeta| \Big)\\
=&\; \frac{1}{2} \max_{\Pi \in \pat^*(A,B)} \Big(
|V(\balpha) \setminus V(\bbeta)| + |V(\bbeta) \setminus V(\balpha)| - |\balpha \setminus \bbeta| - 
|\bbeta \setminus \balpha|\Big)\, ,
\end{align*}
where $(\balpha \setminus \bbeta)_{ij} := \max\{0,\alpha_{ij} - \beta_{ij}\}$, and $V(\balpha)$ always
 includes the root by convention (see Section~\ref{sec:notation}). To complete the proof, 
 it remains to show $\varphi(A,B) \le 0$ for all $A,B$, and furthermore $\varphi(A,B) \le -1/2$ 
 in the case $A \in \cT_{\le D}, B \in \cG_{\le D} \setminus \cT_{\le D}$.

For any $A,B \in \cG_{\le D}$, $\Pi \in \pat^*(A,B)$, and $(\balpha,\bbeta)=
(\sh(\phi;A),\sh(\phi;B))$ for $\phi$ an embedding of $\Pi$, each non-empty connected component
 $\bgamma$ of $\balpha \setminus \bbeta$ spans at most $|\bgamma| + 1$ vertices.
  Of these vertices, none belong to $V(\bbeta) \setminus V(\balpha)$ and due to the 
  ``no isolated components'' constraint imposed by $\pat^*(A,B)$, at most $|\bgamma|$ of 
  them (i.e., all but one) can belong to $V(\balpha) \setminus V(\bbeta)$. Every vertex in 
  $V(\balpha) \setminus V(\bbeta)$ belongs to some non-empty component of 
  $\balpha \setminus \bbeta$ (since the root belongs to $V(\balpha) \cap V(\bbeta)$), 
  so we have $|V(\balpha) \setminus V(\bbeta)| \le |\balpha \setminus \bbeta|$. Similarly, 
  $|V(\bbeta) \setminus V(\balpha)| \le |\bbeta \setminus \balpha|$, implying $\varphi(A,B) \le 0$ 
  as desired.

In order to have equality $\varphi(A,B) = 0$ in the above argument, every non-empty connected 
component of $\balpha \setminus \bbeta$ must be a simple (i.e., without self-loops or multi-edges) 
tree that spans only one vertex in $V(\bbeta)$, and similarly $\bbeta \setminus \balpha$ must 
be a simple tree that spans only one vertex in $V(\balpha)$. It remains to show that this is 
impossible when $A \in \cT_{\le D}, B \in \cG_{\le D} \setminus \cT_{\le D}$. This
 means $A$ is a simple rooted tree. We can assume $A$ has at least one edge, or else the ``no isolated components'' property must fail (since $A$ has no non-empty components).
We will consider a few different cases for $B$.

First consider the case where the root is isolated in $B$. This means $\balpha \setminus \bbeta$ 
has a non-empty connected component containing the root $\circ$. Since $A$ is a simple rooted 
tree and due to ``no isolated components,'' this component also contains a vertex $u$ belonging 
to some non-empty component of $\bbeta$. However, now we have a non-empty component of 
$\balpha \setminus \bbeta$ that spans at least two vertices in $V(\bbeta)$, namely 
$\circ$ and $u$, and so by the discussion above, it is impossible to have equality $\varphi(A,B) = 0$.

Next consider the case where the root is not isolated in $B$, but $B$ has multiple
 connected components. Let $\bgamma$ be a non-empty component of $\bbeta$ that does
  not contain the root. Due to ``no isolated components,'' $\bgamma$ must span some (non-root)
   vertex $u \in V(\balpha)$. Since $\balpha$ is a simple rooted tree, there is a unique path in 
   $\balpha$ from $u$ to $\circ$. Recall that $\circ \in V(\bbeta)$ by convention, so both endpoints 
   of this path belong to $V(\bbeta)$. Also, this path must contain an edge in $\balpha \setminus \bbeta$,
    or else $\bgamma$ would be connected to the root. This means that along the path, we can 
    find a non-empty component of $\balpha \setminus \bbeta$ that spans two different vertices
     in $V(\bbeta)$, and so it is impossible to have equality $\varphi(A,B) = 0$.

The last case to consider is where $B$ contains a cycle. This cycle must contain 
at least one edge in $\bbeta \setminus \balpha$, since $\balpha$ has no cycles. 
Recall from above that in order to have $\varphi(A,B) = 0$, every non-empty component 
of $\bbeta \setminus \balpha$ must be a simple tree, which means the cycle also contains
 at least one edge in $\balpha$. Now we know the cycle in $\bbeta$ has at least one edge 
 in $\bbeta \setminus \balpha$ and at least one edge in $\balpha$, but this means 
 $\bbeta \setminus \balpha$ has a non-empty connected component that spans at least 
 two vertices in $V(\balpha)$, and so it is impossible to have $\varphi(A,B) = 0$.

\subsection{Limit of $\<\bc_n,\bM_n^{-1} \bc_n\>$}

\begin{lemma}\label{lem:limit-exists}
Under the assumptions of Proposition \ref{prop:tree-vs-poly},
and with $\bM_{\infty}$, $\bc_{\infty}$ as in the statement of Lemmas~\ref{lem:c-lim}
and~\ref{lem:M-lim},
we have
\begin{align}
\lim_{n\to\infty}\<\bc_n,\bM_n^{-1}\bc_n\> = \<\bc_\infty,\bM_\infty^{-1}\bc_\infty\> \, .
\end{align}
\end{lemma}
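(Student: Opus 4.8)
The plan is to recognize Lemma~\ref{lem:limit-exists} as a statement of continuity of the map $(\bc,\bM)\mapsto \<\bc,\bM^{-1}\bc\>$ at the point $(\bc_\infty,\bM_\infty)$, exploiting that $\cG_{\le D}$ is a fixed finite index set (its cardinality depends on $D$ but not on $n$), so that all vectors and matrices in play live in a single finite-dimensional space, where entrywise convergence is equivalent to norm convergence. First I would collect the convergence inputs: by Lemma~\ref{lem:c-lim}, $c_{n,A}=c_{\infty,A}+O(n^{-1/2})$ for each $A$, hence $\|\bc_n-\bc_\infty\|=O(n^{-1/2})$ and $\|\bc_n\|=O(1)$; and by Lemma~\ref{lem:M-lim}, $\|\bM_n-\bM_\infty\|_{\op}=O(n^{-1/2})$ and $\|\bM_n\|_{\op}=O(1)$.

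Next I would upgrade the positive-definiteness to the limit. Lemma~\ref{lem:M-pd} furnishes a constant $c_0>0$ with $\lambda_{\mathrm{min}}(\bM_n)\ge c_0$ for all large $n$; since, in a fixed dimension, eigenvalues are continuous functions of the matrix entries, passing to the limit gives $\lambda_{\mathrm{min}}(\bM_\infty)\ge c_0$ as well. Hence $\bM_n$ and $\bM_\infty$ are invertible with $\|\bM_n^{-1}\|_{\op}\le c_0^{-1}$ and $\|\bM_\infty^{-1}\|_{\op}\le c_0^{-1}$ for all large $n$. The resolvent identity
\[
\bM_n^{-1}-\bM_\infty^{-1}=\bM_n^{-1}(\bM_\infty-\bM_n)\bM_\infty^{-1}
\]
then yields $\|\bM_n^{-1}-\bM_\infty^{-1}\|_{\op}\le c_0^{-2}\|\bM_n-\bM_\infty\|_{\op}=O(n^{-1/2})$.

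Finally I would estimate the bilinear form by the triangle inequality, interpolating between $\<\bc_n,\bM_n^{-1}\bc_n\>$ and $\<\bc_\infty,\bM_\infty^{-1}\bc_\infty\>$ by changing one of the three factors at a time; since each factor has norm $O(1)$ and each single-factor replacement costs $O(n^{-1/2})$ by the estimates above, the total difference is $O(n^{-1/2})\to 0$, which is the claim. There is no genuine obstacle here beyond bookkeeping; the one point deserving a word of care is the transfer of the uniform lower bound on $\lambda_{\mathrm{min}}(\bM_n)$ to $\bM_\infty$, since this is what makes $\bM_\infty$ invertible and keeps the inverses uniformly bounded --- without it the map $\bM\mapsto\bM^{-1}$ need not be continuous at $\bM_\infty$. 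Once that is in hand the matrix-inverse perturbation and the bilinear-form estimate are entirely routine.
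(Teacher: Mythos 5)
Your argument is correct and follows essentially the same route as the paper's: both first transfer the uniform lower bound on $\lambda_{\mathrm{min}}(\bM_n)$ to $\bM_\infty$ (the paper via the Weyl-type bound $\lambda_{\mathrm{min}}(\bM_\infty)\ge\lambda_{\mathrm{min}}(\bM_n)-\|\bM_n-\bM_\infty\|_{\op}$, you via continuity of eigenvalues --- the same fact phrased differently), and then conclude by continuity of the inverse in a fixed finite dimension. The only cosmetic difference is that you make the inverse-perturbation step explicit through the resolvent identity, whereas the paper appeals to differentiability of $\bM\mapsto\mathrm{adj}(\bM)/\det(\bM)$; these are interchangeable in fixed dimension.
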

\begin{proof}
From Lemma~\ref{lem:M-pd} we have $\lambda_{\mathrm{min}}(\bM_n) = \Omega(1)$. As a 
direct consequence of Lemma~\ref{lem:M-lim},
\[ \|\bM_n-\bM_{\infty}\|_F = O(n^{-1/2}) \, . \]
This means, using $\|\;\cdot\;\|_{\op}$ for matrix operator norm,
\[ \lambda_{\mathrm{min}}(\bM_{\infty}) \ge \lambda_{\mathrm{min}}(\bM_n) -
 \|\bM_n - \bM_{\infty}\|_{\op} \ge \lambda_{\mathrm{min}}(\bM_n) - \|\bM_n- \bM_{\infty}\|_F 
 \ge\const>0 \, , \]
implying that $\bM_{\infty}$ is symmetric positive definite and thus invertible. 
The result is now immediate from Lemmas~\ref{lem:c-lim} and~\ref{lem:M-lim} because 
$\bM_n$ has fixed dimension and the entries of $\bM^{-1}_n = \mathrm{adj}(\bM_n)/\mathrm{det}(\bM_n)$ 
are differentiable functions of the entries of $\bM_n$ (in a neighborhood of $\bM_{\infty}$).
\end{proof}

\subsection{Proof of Lemma~\ref{lem:r-success}: Accuracy of $r$}

First we claim that $\<\bd_{\infty},\bP_{\infty}^{-1} \bd_{\infty}\> =
 \<\bc_{\infty}, \bM_{\infty}^{-1} \bc_{\infty}\>$. Recall the block form for 
$\bM_n$ and $\bc_n$ in Eq.~\eqref{eq:block-form}, and that
 $\bR_{\infty} = 0$ per Lemma \ref{lem:M-lim} and therefore
\[ \bM_{\infty} = \left[\begin{array}{cc} \bP_{\infty} & \bzero \\ 
\bzero & \bQ_{\infty} \end{array}\right] \qquad\text{and}\qquad 
\bM_{\infty}^{-1} = \left[\begin{array}{cc} \bP_{\infty}^{-1} & 0 \\ 0 & \bQ_{\infty}^{-1} \end{array}\right]. \]
The claim follows because $\be_{\infty} = 0$ (see Eq.~\eqref{eq:block-form}
 and Lemma \ref{lem:c-lim}).

Now as $n \to \infty$ we have
\[ \EE[r(\bY) \cdot \psi(\theta_1)] = \< \hbr, \bd_n \> = 
\<\bd_n , \bP_{\infty}^{-1} \bd_{\infty}\> \,\to\, 
\<\bd_{\infty}, \bP_{\infty}^{-1} \bd_{\infty}\>\, , \]
and
\[ \EE[r(\bY)^2] = \<\hbr, \bP_n \hbr\> = \<\bd_{\infty},
 \bP_{\infty}^{-1} \bP_n \bP^{-1}_{\infty} \bd_{\infty}\> \,\to\, \<\bd_{\infty}, \bP_{\infty}^{-1}
  \bd_{\infty}\>\, , \]
completing the proof.

\subsection{Proof of Lemma~\ref{lem:change-basis}: Change-of-basis}

Fix $A \in \cT_{\le D}$. If $A = \emptyset$ then $\PolH_A = \Poly_A = 1$ and the proof is 
complete, so suppose $A \ne \emptyset$. Recall that our goal is to take
\[ \PolH_A(\bY) := \frac{1}{\sqrt{|\emb(A)|}} \sum_{\phi \in \emb(A)} \PolH_{\sh(\phi;A)}(\bY) \]
and approximate it in the basis $\{\Poly_B \,:\, B \in \cT_{\le D}\}$, where
\[ \Poly_B(\bY) := \frac{1}{\sqrt{|\nr(B)|}} \sum_{\phi \in \nr(B)} \bY^\phi \, , \]
where we use the shorthand
\[ \bY^\phi := \bY^{\sh(\phi;B)} = \prod_{(i,j) \in E(B)} Y_{\phi(i),\phi(j)} \, . \]
Since $A$ is a tree with at least one edge,
\begin{align}
\PolH_A(\bY) &= \frac{1}{\sqrt{|\emb(A)|}}
 \sum_{\phi \in \emb(A)}(h_{\sh(\phi;A)}(\bY) - \E h_{\sh(\phi)}(\bY)) \nonumber \\
 &= 
 \frac{1}{\sqrt{|\emb(A)|}} \sum_{\phi \in \emb(A)} \bY^\phi - \sqrt{|\emb(A)|} \EE \bY^{\phi^*}\, ,
\label{eq:H_A-exp}
\end{align}
for an arbitrary $\phi^* \in \emb(A)$.

We first focus on the non-random term $\sqrt{|\emb(A)|} \EE \bY^{\phi^*}$, which will be easy 
to expand in the basis $\{\Poly_B\}$ because $\Poly_\emptyset = 1$. Let
\[ m_A = \lim_{n \to \infty} \sqrt{|\emb(A)|} \EE \bY^{\phi^*} \, , \]
noting that this limit exists by combining~\eqref{eq:emb} with the fact
\[ \EE \bY^{\phi^*} = \const(A) \cdot n^{-\frac{1}{2}|E(A)|} = 
\const(A) \cdot n^{-\frac{1}{2}(|V(A)|-1)} \, . \]
We can now write
\[ \sqrt{|\emb(A)|} \EE \bY^{\phi^*} = m_A \Poly_\emptyset + \PolErr_{A,0} \]
where $\PolErr_{A,0} := \sqrt{|\emb(A)|} \EE \bY^{\phi^*} - m_A$ satisfies 
$\PolErr_{A,0} = o(1)$ and so (being non-random) $\EE[\PolErr_{A,0}^2] = o(1)$.

The error term $\PolErr_A$ will be the sum of $K$ terms 
$\PolErr_A = \sum_{i=1}^K \PolErr_{A,i}$, with $K$ independent of $n$. It suffices to show 
$\EE[\PolErr_{A,i}^2]=o(1)$ for each
 individually, as this implies by the triangle inequality
\begin{equation}\label{eq:triangle-error}
\EE[\PolErr_A^2]^{1/2} \le \sum_{i=1}^K\EE[\PolErr_{A,i}^2]^{1/2}=o(1) \, .
\end{equation}

We next handle the more substantial term in~\eqref{eq:H_A-exp}, namely
\begin{align}\label{eq:G}
G_A(\bY)& := \frac{1}{\sqrt{|\emb(A)|}} \sum_{\phi \in \emb(A)} \bY^\phi \\
&= \frac{1}{\sqrt{|\emb(A)|}} \sum_{\phi \in \nr(A)} \bY^\phi - \frac{1}{\sqrt{|\emb(A)|}}
 \sum_{\phi \in \nr(A) \setminus \emb(A)} \bY^\phi \, .\nonumber
\end{align}
For the first term,
\begin{align*}
 \frac{1}{\sqrt{|\emb(A)|}} \sum_{\phi \in \nr(A)} \bY^\phi& = 
\sqrt{\frac{|\nr(A)|}{|\emb(A)|}} \Poly_A(\bY)\\
& =
 \Poly_A(\bY) + \left(\sqrt{\frac{|\nr(A)|}{|\emb(A)|}} - 1\right) \Poly_A(\bY)
  =: \Poly_A(\bY) + \PolErr_{A,1}(\bY) \, . 
  \end{align*}
Since $\lim_{n \to \infty} (|\nr(A)|/|\emb(A)|) = 1$ (in fact, embeddings make up a 
$1-o(1)$ fraction of \emph{all} labelings, not just non-reversing ones) and 
$\EE[\Poly_A^2] = O(1)$ (similarly to the proof of Lemma~\ref{lem:E_A2} below), we have 
$\EE[\PolErr_{A,1}^2] = o(1)$.

Returning to the remaining term in~\eqref{eq:G}, we partition 
$\nr(A) \setminus \emb(A)$ into  two sets
 $L_1 = L_1(A)$ and $L_2 = L_2(A)$, defined as follows.
  If the multigraph 
 $\balpha := \sh(\phi)$ has no triple-edges or higher 
 (i.e., $\alpha_{ij} \le 2$ for all $i \le j$) and has no cycles 
 (namely, the simple graph $\bbeta$ defined by $\beta_{ij} = \One_{\alpha_{ij} \ge 1}$ has no cycles) 
 then we let $\phi \in L_1$; otherwise $\phi \in L_2$. The remaining term to handle is
\begin{align}
\frac{1}{\sqrt{|\emb(A)|}} \sum_{\phi \in \nr(A) \setminus \emb(A)} \bY^\phi &= 
\frac{1}{\sqrt{|\emb(A)|}} \sum_{\phi \in L_1} \bY^\phi +
 \frac{1}{\sqrt{|\emb(A)|}} \sum_{\phi \in L_2} \bY^\phi \nonumber \\
&=: \frac{1}{\sqrt{|\emb(A)|}} \sum_{\phi \in L_1} \bY^\phi + \PolErr_{A,2}(\bY) \, .
\label{eq:nr-emb-term}
\end{align}

\begin{lemma}\label{lem:E_A2}
$\EE[\PolErr_{A,2}^2] = o(1)$.
\end{lemma}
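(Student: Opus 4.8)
The plan is to expand the square and control the resulting sum by a counting argument in the same spirit as the proof of Lemma~\ref{lem:M-lim}. Writing out $\PolErr_{A,2}$,
\[ \EE[\PolErr_{A,2}^2] = \frac{1}{|\emb(A)|} \sum_{\phi,\phi' \in L_2} \EE[\bY^{\phi}\bY^{\phi'}] \, , \]
where, for each pair, $\bY^\phi\bY^{\phi'} = \bY^H$ with $H := \sh(\phi;A) + \sh(\phi';A) \in \NN^\Pairs$ the multigraph obtained by overlaying the two images of $A$, edge multiplicities adding. First I would group the pairs $(\phi,\phi')$ according to their \emph{pattern}: the root-preserving isomorphism class of the two-colored rooted multigraph on vertex set $\phi(V(A)) \cup \phi'(V(A))$ with the edges of $\sh(\phi;A)$ colored red and those of $\sh(\phi';A)$ blue. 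Since $A$ has at most $D$ edges there are only $O(1)$ patterns; a pattern on $v$ vertices is realized by $\Theta(n^{v-1})$ pairs (choose distinct labels in $[n]$ for the non-root vertices, vertex $1$ being the root); and $\EE[\bY^H]$ depends only on the pattern. Together with $|\emb(A)| = \Theta(n^{|V(A)|-1})$ from~\eqref{eq:emb}, it then suffices to show that each pattern contributes $o_n(1)$.

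The second step is to bound $|\EE[\bY^H]|$ for a fixed pattern using the Hermite/moment facts of Section~\ref{sec:hermite}. Non-reversing labelings create no self-loops, so $H$ has no diagonal entries; writing $Y_{ij} = \theta_i\theta_j/\sqrt n + Z_{ij}$ with $Z_{ij}\sim\normal(0,1)$ and expanding each factor $\EE_{Z_{ij}}[(\theta_i\theta_j/\sqrt n + Z_{ij})^{H_{ij}}]$ into a nonnegative combination of powers $(\theta_i\theta_j/\sqrt n)^{k_{ij}}$ with $k_{ij} \le H_{ij}$ and $k_{ij} \equiv H_{ij} \pmod 2$, one gets $\EE[\bY^H]$ as a sum of $O(1)$ terms, each of the form $\const \cdot n^{-\frac12\sum_{ij} k_{ij}} \prod_i \EE[\Theta^{\deg_i(k)}]$, where $\deg_i(k)$ is the degree of vertex $i$ in the multigraph $k$. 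All moments of $\Theta$ are finite and all degrees are bounded in terms of $D$, so $|\EE[\bY^H]| = O(n^{-\omega(H)/2})$, where $\omega(H)$ denotes the number of edges of the underlying simple graph $\bar H$ whose total multiplicity in $H$ is odd (the dominant terms take $k_{ij} \in \{0,1\}$).

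The crux is the combinatorial claim that, for $\phi, \phi' \in L_2$ and with $v := |\phi(V(A)) \cup \phi'(V(A))|$,
\[ 2v - \omega(H) \le 2|V(A)| - 1 \, . \]
To prove it: $\bar H$ is connected (it is the union of two connected homomorphic images of the tree $A$, each containing the root), so $|E(\bar H)| \ge v-1$; meanwhile the total edge multiplicity of $H$ is $2|E(A)| = 2(|V(A)|-1)$, and since each odd-multiplicity edge contributes at least $1$ and each even-multiplicity edge at least $2$, this gives $2|E(\bar H)| - \omega(H) \le 2(|V(A)|-1)$. Combining the two, $2v - \omega(H) \le 2|V(A)|$, so it remains to exclude equality. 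Equality forces $|E(\bar H)| = v-1$, hence $\bar H$ is a tree, and forces every edge of $H$ to have multiplicity exactly $1$ or $2$. But then $\sh(\phi;A)$ — whose multiplicities are bounded by those of $H$ and whose underlying simple graph is a subgraph of the tree $\bar H$ — has no edge of multiplicity $\ge 3$ and no cycle, contradicting $\phi \in L_2$. Hence each pattern contributes $O\big(n^{v-1} \cdot n^{-\omega(H)/2} / n^{|V(A)|-1}\big) = O(n^{\,v - \omega(H)/2 - |V(A)|}) = O(n^{-1/2})$, and summing over the $O(1)$ patterns gives $\EE[\PolErr_{A,2}^2] = O(n^{-1/2}) = o_n(1)$.

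I expect the combinatorial claim above to be the only genuinely new ingredient: the moment estimate of the second paragraph is essentially a rerun of computations already carried out for Lemma~\ref{lem:M-lim}, and the pattern bookkeeping of the first paragraph is routine, but obtaining the \emph{strict} inequality — correctly identifying the equality case as ``$\bar H$ a tree with all multiplicities at most $2$'' and then invoking the defining feature of $L_2$ (a cycle or a triple edge in the shadow) to rule it out — is where the argument has to be set up with care.
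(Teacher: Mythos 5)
Your proof is correct and follows essentially the same approach as the paper: group pairs $(\phi,\phi')$ by their two-colored pattern, bound the expectation of the product by $O(n^{-\omega/2})$ where $\omega$ counts odd-multiplicity edges, and use a vertex/edge-multiplicity count together with the defining property of $L_2$ to force the exponent strictly below zero. The only difference is cosmetic bookkeeping: the paper classifies edges of the pattern into single, double, odd-$\ge 3$, and even-$\ge 4$ classes and derives the exponent bound $-(o+e+\One_{\mathrm{cycle}})$ directly, whereas you use the connectivity inequality $|E(\bar H)|\ge v-1$ plus total-multiplicity counting and then analyze the equality case ($\bar H$ a tree with all multiplicities $\le 2$) to derive the contradiction with $\phi\in L_2$ --- equivalent arguments, and both yield the needed $o(1)$ (in fact, since the total multiplicity $2|E(A)|$ is even, $\omega(H)$ is even, so your $2v-\omega(H)\le 2|V(A)|-1$ automatically sharpens to $\le 2|V(A)|-2$, recovering the paper's $O(n^{-1})$ rate).
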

\begin{proof}
 We define the set of patterns $\Pi \in \pat_2(A,B)$
 between two rooted trees $A,B \in \cT_{\le D}$ as Section~\ref{sec:pf-M-lim}, but now
restricted to labelings in $L_2$.
 Namely edge-induced subgraph of red edges in $\Pi\in \pat_2(A,B)$ should not be isomorphic to $A$ but
  rather isomorphic to the image of some $\phi \in L_2(A)$ (and similarly for the blue 
  edges and $B$). We  write $(\phi_1,\phi_2) \in \emb(\Pi)$ to denote an
   embedding of this pattern into vertex set $[n]$. Similarly to Section~\ref{sec:pf-M-lim}, 
   compute
\begin{align}
\EE[\PolErr_{A,2}^2] &= \frac{1}{|\emb(A)|} \sum_{\phi_1,\phi_2 \in L_2(A)} 
\EE[\bY^{\phi_1} \bY^{\phi_2}] \nonumber \\
&= \frac{1}{|\emb(A)|} \sum_{\Pi \in \pat_2(A,A)} \, 
\sum_{(\phi_1,\phi_2) \in \emb(\Pi)} \EE[\bY^{\phi_1} \bY^{\phi_2}] \nonumber \\
&= \frac{1}{|\emb(A)|} \sum_{\Pi \in \pat_2(A,A)} |\emb(\Pi)| \cdot \EE[\bY^{\balpha+\bbeta}] \nonumber
\intertext{where $(\balpha,\bbeta) = (\sh(\phi_1;A),\sh(\phi_2;A))$ is the image
 of an arbitrary embedding of $\Pi$}
&= \Theta(n^{1-|V(A)|}) \sum_{\Pi \in \pat_2(A,A)} O(n^{|V(\Pi)|-1} \cdot n^{-\frac{1}{2}\odd(\Pi)}) \nonumber
\intertext{where $\odd(\Pi)$ denotes the number of edges $i \le j$ for which $\alpha_{ij} + \beta_{ij}$ is odd, 
which depends only on $\Pi$ (not $\balpha,\bbeta$). Now since $A$ is a tree,
 $|E(\Pi)| = 2 |E(A)| = 2(|V(A)| - 1)$ and so the above becomes}
&= \sum_{\Pi \in \pat_2(A,A)} O(n^{|V(\Pi)|-\frac{1}{2}|E(\Pi)|-1-\frac{1}{2}\odd(\Pi)}) \label{eq:L2-exp}
\end{align}
and so it remains to show that the exponent is strictly negative for any $\Pi \in \pat_2(A,A)$.

Fix $\Pi \in \pat_2(A,A)$. Let $s$ denote the number of single-edges in $\Pi$ (ignoring the edge colors), let $d$ denote the number of double-edges, let $o$ denote the number of $k$-edges for $k \ge 3$ odd, and let $e$ denote the number of $k$-edges for $k \ge 4$ even. By definition we have
\[ \odd(\Pi) = s + o \, , \]
\[ |E(\Pi)| \ge s + 2d + 3o + 4e \, . \]
Also, since $A$ (and therefore $\Pi$) is connected,
\[ |V(\Pi)| \le s+d+o+e+1-\One_{\mathrm{cycle}} \]
where $\One_{\mathrm{cycle}}$ is the indicator that $\Pi$ (viewed as a simple graph by replacing each multi-edge by a single-edge) contains a cycle. Now the exponent in~\eqref{eq:L2-exp} is
\begin{align*}
|V(\Pi)|-\frac{1}{2}|E(\Pi)|-1-\frac{1}{2}\odd(\Pi)
&\le (s+d+o+e+1-\One_{\mathrm{cycle}}) - \frac{1}{2}(s+2d+3o+4e) - 1 - \frac{1}{2}(s+o) \\
&= -(o+e+\One_{\mathrm{cycle}}) \, .
\end{align*}
By the definition of $L_2$, we must either have $o + e \ge 1$ or $\One_{\mathrm{cycle}} = 1$, which means the exponent is $\le -1$, completing the proof.
\end{proof}

It remains to handle the first term in Eq.~\eqref{eq:nr-emb-term}. For $\phi \in L_1(A)$, 
define the ``skeleton'' $\sk(\phi) \in \NN^\Pairs$ to be the subgraph of $\sh(\phi)$ obtained from 
$\balpha := \sh(\phi;A)$ as follows: delete all multi-edges (i.e., whenever $\alpha_{ij} \ge 2$,
 set $\alpha_{ij} = 0$) and then take the connected component of the root (vertex 1).
  Using the definition of $L_1(A)$ and recalling that $\phi \in L_1(A)$ is not an embedding, 
  note that $\sk(\phi)$ is always a simple tree with strictly fewer edges than $A$. 
  The remaining term to handle is
\begin{align}
\frac{1}{\sqrt{|\emb(A)|}} \sum_{\phi \in L_1} \bY^\phi &= \frac{1}{\sqrt{|\emb(A)|}}
 \sum_{\phi \in L_1} C_\phi \bY^{\sk(\phi)} + \frac{1}{\sqrt{|\emb(A)|}} \sum_{\phi \in L_1} \left(\bY^{\phi} - C_\phi \bY^{\sk(\phi)} \right) \nonumber \\
&=: \frac{1}{\sqrt{|\emb(A)|}} \sum_{\phi \in L_1} C_\phi \bY^{\sk(\phi)} + \PolErr_{A,3}(Y)
\label{eq:L1-term}
\end{align}
where
\[ C_\phi := \EE[\bY^{\sh(\phi)-\sk(\phi)}] \, . \]

\begin{lemma}
$\EE[\PolErr_{A,3}^2] = o(1)$.
\end{lemma}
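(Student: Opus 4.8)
The plan is to bound the second moment $\EE[\PolErr_{A,3}(\bY)^2]$ directly, by a pattern-counting argument of the same type as in the proof of Lemma~\ref{lem:E_A2}, the essential new feature being that the centering constant $C_\phi = \EE[\bY^{\bdelta_\phi}]$, where $\bdelta_\phi := \sh(\phi) - \sk(\phi)$, kills the leading-order contributions. First I would rewrite each summand as $\bY^\phi - C_\phi\bY^{\sk(\phi)} = \bY^{\sk(\phi)}(\bY^{\bdelta_\phi} - C_\phi)$, using $\sk(\phi) + \bdelta_\phi = \sh(\phi)$ entrywise, so that
\[
\EE[\PolErr_{A,3}^2] = \frac{1}{|\emb(A)|}\sum_{\phi_1,\phi_2\in L_1(A)}\EE\big[\bY^{\sk(\phi_1)}(\bY^{\bdelta_{\phi_1}}-C_{\phi_1})\,\bY^{\sk(\phi_2)}(\bY^{\bdelta_{\phi_2}}-C_{\phi_2})\big].
\]
Exactly as in Section~\ref{sec:pf-M-lim}, I would group the pairs $(\phi_1,\phi_2)$ by their joint pattern $\Pi$, a rooted two-colored multigraph whose red (resp.\ blue) edge-subgraph is the image of an $L_1(A)$-labeling. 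Since $|\emb(A)| = \Theta(n^{|V(A)|-1})$ and $|\emb(\Pi)| = \Theta(n^{|V(\Pi)|-1})$, and the expectation above depends only on $\Pi$ (through a representative embedding, for which I write $\balpha_i := \sh(\phi_i)$ together with $\sk_i$, $\bdelta_i$, $C_i$), the problem reduces to showing that for every pattern $\Pi$,
\[
n^{\,|V(\Pi)|-|V(A)|}\cdot\big|\EE[\bY^{\sk_1}(\bY^{\bdelta_1}-C_1)\,\bY^{\sk_2}(\bY^{\bdelta_2}-C_2)]\big| = o(1),
\]
which suffices since there are only $O(1)$ patterns.

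Next I would expand the expectation, using $\bY^{\sk_i}\bY^{\bdelta_i} = \bY^{\balpha_i}$ and $\EE[\bY^{\bdelta_i}] = C_i$, as
\[
\EE[\bY^{\balpha_1+\balpha_2}] - C_2\,\EE[\bY^{\balpha_1+\sk_2}] - C_1\,\EE[\bY^{\sk_1+\balpha_2}] + C_1C_2\,\EE[\bY^{\sk_1+\sk_2}],
\]
and evaluate the order of each term by the moment calculus of Sections~\ref{sec:hermite} and~\ref{sec:pf-M-lim}: for any $\bmu\in\NN^\Pairs$ one has $\EE[\bY^{\bmu}] = \const(\bmu)\cdot n^{-\frac12\odd(\bmu)} + O(n^{-\frac12\odd(\bmu)-1})$, where $\const(\bmu)$ is computed by deleting every even-multiplicity edge of $\bmu$ (replacing it by a scalar), factorizing the resulting $\theta$-moment over the connected components of the remaining (odd-edge) graph, and the $O(n^{-1})$-smaller correction comes only from raising the $\bX$-power on one edge. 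Here I would use the structure valid for $\phi\in L_1(A)$: $\sh(\phi)$ is a tree when multiplicities are ignored and has strictly fewer edges than $A$; $\sk_i$ has only multiplicity-$1$ edges; $\bdelta_i$ consists of multiplicity-$2$ ``double'' edges together with multiplicity-$1$ ``detached'' edges; and $|V(A)| = |E(A)|+1$. The heart of the proof is to show that, for every $\Pi$, the following two phenomena combine to force a strictly negative total exponent. \emph{(i) Centering cancellation:} at leading order every double edge is deleted, so when $\balpha_1,\balpha_2$ are loosely coupled the four expectations above are, to leading order, products of $\theta$-moments over the same forest $\sk_1\cup\sk_2\cup(\text{detached edges})$, and their $\const$'s cancel; in the extreme case where $\balpha_1$ and $\balpha_2$ meet only at the root one conditions on $\theta_1$ and finds a gain of $n^{-2}$ over the naive bound (total exponent $-2$), a computation in the same spirit as the proof that $\varphi(A,B)\le 0$ in Section~\ref{sec:pf-M-lim}. \emph{(ii) Odd-edge decay:} when $\phi_1,\phi_2$ share many edges the relative count $n^{|V(\Pi)|-|V(A)|}$ grows, but then $\balpha_1+\balpha_2$ necessarily has many odd-multiplicity edges, each contributing a factor $n^{-1/2}$ through the $\odd(\cdot)$ rule; for instance in the diagonal case $\phi_1=\phi_2$ the expectation is $\Theta(1)$ while $n^{|V(\Pi)|-|V(A)|} = n^{-d}$ with $d\ge1$ the number of double edges of $\phi_1$, so the total exponent is $-d\le -1$, and the intermediate patterns are similarly balanced so that the exponent never exceeds $-1$.

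The main obstacle is precisely this last verification across all patterns: one must check that the leading-order cancellation of mechanism (i) is genuine for every $\Pi$ — keeping careful track of which vertices of $\bdelta_i$ are shared with $\sk_i$ (via the deleted double edges) and with the root, so that the four $\theta$-moment factorizations really do coincide at leading order, including when a double edge is incident to the root — and that after this cancellation the surviving contribution is always $O(n^{-1})$ rather than $O(1)$; this is most delicate for the near-diagonal patterns, where $n^{|V(\Pi)|-|V(A)|}$ already saturates at $\Theta(1)$ and all the decay must come from the expectation itself. This bookkeeping is of the same flavour as, but somewhat more involved than, the case analysis in the proof of Lemma~\ref{lem:E_A2} and in the proof that $\varphi(A,B)\le0$. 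Once it is in place, $\EE[\PolErr_{A,3}^2] = O(n^{-1}) = o(1)$ follows by summing over the $O(1)$ patterns, which completes the proof.
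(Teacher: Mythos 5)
Your decomposition is genuinely different from the paper's. You expand the product of the two centered factors into four moments, $\EE[\bY^{\balpha_1+\balpha_2}] - C_2\,\EE[\bY^{\balpha_1+\sk_2}] - C_1\,\EE[\bY^{\sk_1+\balpha_2}] + C_1C_2\,\EE[\bY^{\sk_1+\sk_2}]$, and try to show that the leading-order constants in these four moments cancel. The paper instead Taylor-expands \emph{each} centered factor $\bY^{\sk(\phi_i)}(\bY^{\osk(\phi_i)} - C_{\phi_i})$ in powers of $\bX$ and $\bZ$, observes that nonzero terms need even total $\bZ$-power on every edge, and then uses the centering together with the independence structure of the Gaussian noise to force, under the property $P(\Pi)$, at least one ``bridge'' edge of $\osk(\phi_1)$ to carry $\beta_{ij}=0$: this is what extracts the extra factor $n^{-1}$ without ever having to compare leading constants of several different moments. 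Your route localizes the gain globally (through a cancellation identity among four scalar moments with different $\bY$-exponents and hence different $\mathrm{odd}(\cdot)$ values), whereas the paper localizes the gain to a single edge.

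The gap in your proposal is exactly the step you flag as ``the main obstacle'': you never actually verify the cancellation of the $\const$'s across all patterns $\Pi$. This is the entire content of the lemma. You check it in the diagonal case $\phi_1=\phi_2$ (where, as you correctly note, the pattern count $n^{-d}$ alone suffices and no cancellation is needed) and in the fully disjoint case (where both centering factors independently gain $n^{-1}$), but the intermediate patterns --- where $\osk(\phi_1)$ shares vertices or edges with $\sh(\phi_2)$, or where the simple-graph overlay $\Pi$ has cycles, or where $\osk(\phi_i)$ contains both double edges and detached simple edges --- are not handled; and the assertion that ``the intermediate patterns are similarly balanced so that the exponent never exceeds $-1$'' is precisely what must be proved. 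Moreover, the cancellation you invoke is not as clean as it looks: the four exponents $\balpha_1+\balpha_2$, $\balpha_1+\sk_2$, $\sk_1+\balpha_2$, $\sk_1+\sk_2$ have different parity profiles, and showing that the leading terms of the corresponding moments really coincide (as opposed to merely having the same order) requires the kind of vertex-sharing control that the paper's $P(\Pi)$ condition supplies. The paper's bridge argument sidesteps this: once some bridge is forced to contribute an $\bX^2$ rather than a centered $\bZ^2$, you immediately get $O(n^{-\frac12\mathrm{odd}(\Pi)-1})$ without identifying what cancels with what; combining this with the edge/vertex bookkeeping already done for Lemma~\ref{lem:E_A2} (the $-(o+e+\One_{\mathrm{cycle}}+\One_{P(\Pi)})$ count) closes the argument. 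To turn your outline into a proof you would essentially have to reprove the vertex-disjointness structure that $P(\Pi)$ guarantees and then carry out the four-term cancellation term by term, which is more bookkeeping than the paper's route, not less.
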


\begin{proof}
Define $\pat_1(A,B)$ as in the proof of Lemma~\ref{lem:E_A2} except with $L_1$ in 
place of $L_2$. Our goal is to bound
\begin{align*}
\EE[\PolErr_{A,3}^2] &= \frac{1}{|\emb(A)|} \sum_{\phi_1,\phi_2 \in L_1(A)} 
\EE\left(\bY^{\phi_1} - C_{\phi_1} \bY^{\sk(\phi_1)} \right)\left(\bY^{\phi_2} - C_{\phi_2} \bY^{\sk(\phi_2)} \right) \\
&= \frac{1}{|\emb(A)|} \sum_{\Pi \in \pat_1(A,A)} \; \sum_{(\phi_1,\phi_2) \in \emb(\Pi)}
 \EE\left(\bY^{\phi_1} - C_{\phi_1} \bY^{\sk(\phi_1)} \right)\left(\bY^{\phi_2} - 
 C_{\phi_2} \bY^{\sk(\phi_2)} \right).
\end{align*}
For $\phi \in L_1(A)$, letting $\osk(\phi) = \sh(\phi) - \sk(\phi)$ 
(throughout this proof we adopt the shorthand $\sh(\phi)=\sh(\phi;A)$ since the base graph 
$A$ is fixed),
\begin{align*}
\bY^{\phi} - C_\phi \bY^{\sk(\phi)} &= \bY^{\sk(\phi)} \left(\bY^{\osk(\phi)} - 
\EE \bY^{\sh(\phi)-\sk(\phi)}\right) \\
&= \left(\sum_{0 \le \balpha \le \sk(\phi)} \bX^{\sk(\phi)-\balpha} \bZ^\balpha \right) 
\sum_{0 \le \bbeta \le \osk(\phi)} \left(\bX^{\osk(\phi)-\bbeta} \bZ^\bbeta - 
\EE [\bX^{\osk(\phi)-\bbeta} \bZ^\bbeta] \right).
\end{align*}
This means
\begin{align}
\label{eq:L1-expansion}
&\EE\left(\bY^{\phi_1} - C_{\phi_1} \bY^{\sk(\phi_1)} \right)\left(\bY^{\phi_2} - C_{\phi_2} \bY^{\sk(\phi_2)} \right) \\
&= \sum_{\substack{\bzero \le \balpha \le \sk(\phi_1) \\ \bzero \le \bbeta \le \osk(\phi_1) \\ \bzero \le \bgamma \le \sk(\phi_2) \\
 \bzero \le \bdelta \le \osk(\phi_2)}} \bX^{\sk(\phi_1)-\balpha} \bZ^\balpha \bX^{\sk(\phi_2)-\bgamma} 
 \bZ^\bgamma \left(\bX^{\osk(\phi_1)-\bbeta} \bZ^\bbeta - \EE [\bX^{\osk(\phi_1)-\bbeta} \bZ^\bbeta] \right) 
 \left(\bX^{\osk(\phi_2)-\bdelta} \bZ^\bdelta - \EE [\bX^{\osk(\phi_2)-\bdelta} \bZ^\bdelta] \right). \nonumber
\end{align}
We will next claim that certain terms in the sum in~\eqref{eq:L1-expansion} are zero.
 First note that we must have $\alpha_{ij} + \beta_{ij} + \gamma_{ij} + \delta_{ij}$
  is even for every edge $i \le j$, or else the corresponding term in~\eqref{eq:L1-expansion} 
  is zero due to the $Z$ factors. Therefore, for $(\phi_1,\phi_2) \in \emb(\Pi)$, 
  the value of~\eqref{eq:L1-expansion} is $O(n^{-\frac{1}{2} \odd(\Pi)})$ where, 
  recall, $\odd(\Pi)$ denotes the number of odd-edges in $\Pi$ (i.e., the number of edges 
  $i \le j$ for which $\sh(\phi_1)_{ij} + \sh(\phi_2)_{ij}$ is odd).

We will improve the bound $O(n^{-\frac{1}{2} \odd(\Pi)})$ in certain cases. 
Let $P(\Pi)$ denote the property that $\Pi$ has no triple-edges or higher 
(i.e., $\sh(\phi_1)_{ij} + \sh(\phi_2)_{ij} \le 2$ for all $i \le j$) and no cycles
 (when $\Pi$ is viewed as a simple graph by replacing multi-edges by single-edges). 
 We claim that~\eqref{eq:L1-expansion} is $O(n^{-\frac{1}{2} \odd(\Pi) - 1})$ whenever 
 $P(\Pi)$ holds. To prove this, first identify the ``bridges,'' i.e., edges $(i,j)$
  for which $\sh(\phi_1)_{ij} = 2$ and one endpoint of $(i,j)$ belongs to $V(\sk(\phi_1))$. 
  At least one bridge must exist by the definition of $L_1$. Using the definition of 
  $P(\Pi)$, $\osk(\phi_1)$ shares no vertices with $\sh(\phi_2)$ except possibly one endpoint 
  of each bridge. As a result, some bridge $(i,j)$ must have $\beta_{ij} = 0$ or else the
   corresponding term in~\eqref{eq:L1-expansion} is zero; to see this, note that if every 
   bridge has $\beta_{ij} = 2$ then the factor $(\bX^{\osk(\phi_1)-\bbeta} \bZ^\bbeta - 
   \EE [\bX^{\osk(\phi_1)-\bbeta} \bZ^\bbeta])$ has mean zero and is independent from the other factors 
   in~\eqref{eq:L1-expansion}. This gives the desired improvement to $O(n^{-\frac{1}{2} \odd(\Pi) - 1})$.

We now have
\begin{align*}
\EE[\PolErr_{A,3}^2] &= \frac{1}{|\emb(A)|} \sum_{\Pi \in \pat_1(A,A)} \; \sum_{(\phi_1,\phi_2) \in \emb(\Pi)} O(n^{-\frac{1}{2} \odd(\Pi) - \One_{P(\Pi)}}) \\
&= \sum_{\Pi \in \pat_1(A,A)} O(n^{-(|V(A)|-1)+(|V(\Pi)|-1)-\frac{1}{2} \odd(\Pi) - \One_{P(\Pi)}}) \\
&= \sum_{\Pi \in \pat_1(A,A)} O(n^{-|V(A)|+|V(\Pi)|-\frac{1}{2} \odd(\Pi) - \One_{P(\Pi)}})
\intertext{and now repeating the argument from the proof of Lemma~\ref{lem:E_A2}}
&= \sum_{\Pi \in \pat_1(A,A)} O(n^{|V(\Pi)|-\frac{1}{2}|E(\Pi)|-1-\frac{1}{2} \odd(\Pi) - \One_{P(\Pi)}}) \\
&\le \sum_{\Pi \in \pat_1(A,A)} O(n^{-(o+e+\One_{\mathrm{cycle}} + \One_{P(\Pi)})}) \\
&\le O(n^{-1}) \, ,
\end{align*}
completing the proof.
\end{proof}

It remains to handle the first term in~\eqref{eq:L1-term}. For each $\phi \in L_1$, $\sh(\phi)$ is isomorphic (as a rooted multigraph) to some connected multigraph that is a rooted tree but with both single- and double-edges allowed, and at least one double-edge required; let $\pat_1(A)$ denote the set of such multigraphs and for $\Pi \in \pat_1(A)$, write $\phi \in \emb(\Pi)$ when $\sh(\phi)$ is isomorphic to $\Pi$. The remaining term to handle is
\begin{align}
\frac{1}{\sqrt{|\emb(A)|}} \sum_{\phi \in L_1} C_\phi \bY^{\sk(\phi)} &= \frac{1}{\sqrt{|\emb(A)|}} \sum_{\Pi \in \pat_1(A)} \, \sum_{\phi \in \emb(\Pi)} C_\phi \bY^{\sk(\phi)}. \nonumber
\intertext{Note that $C_\Pi := C_\phi$ depends only on $\Pi$ (not $\phi$). Also write $\Sk(\Pi)$ for the rooted tree isomorphic to $\sk(\phi)$, which again only depends on $\Pi$. The above becomes}
&\hspace{-40pt}= \frac{1}{\sqrt{|\emb(A)|}} \sum_{\Pi \in \pat_1(A)} C_\Pi \sum_{\phi \in \emb(\Sk(\Pi))}  \bY^{\phi} \, (1+o(1)) \, n^{|V(\Pi)| - |V(\Sk(\Pi))|} \nonumber \\
&\hspace{-40pt}= \frac{1+o(1)}{\sqrt{|\emb(A)|}} \sum_{\Pi \in \pat_1(A)} C_\Pi \, n^{|V(\Pi)| - |V(\Sk(\Pi))|} \sqrt{|\emb(\Sk(\Pi))|} \, G_{\Sk(\Pi)}(\bY) \, .
\label{eq:L1-G-term}
\end{align}
The number of double-edges in $\Pi$ is $d(\Pi) = |V(A)| - |V(\Pi)|$. We have
\[ |\emb(A)| = (1+o(1)) n^{|V(A)|-1}, \]
\[ C_\Pi = (\const(\Pi)+o(1)) n^{-\frac{1}{2}(|V(\Pi)| - |V(\Sk(\Pi))| - d(\Pi))} = (\const(\Pi)+o(1)) n^{-\frac{1}{2}(2|V(\Pi)| - |V(\Sk(\Pi))| - |V(A)|)}. \]
Now~\eqref{eq:L1-G-term} becomes
\begin{align*}
&\sum_{\Pi \in \pat_1(A)} (\const(\Pi)+o(1)) n^{-\frac{1}{2}(|V(A)|-1) - \frac{1}{2}(2|V(\Pi)| - |V(\Sk(\Pi))| - |V(A)|) + |V(\Pi)| - |V(\Sk(\Pi))| + \frac{1}{2}(|V(\Sk(\Pi))|-1)} G_{\Sk(\Pi)}(\bY) \\
&\quad = \sum_{\Pi \in \pat_1(A)} (\const(\Pi)+o(1)) \, G_{\Sk(\Pi)}(\bY) \\
&\quad = \sum_{B \in \cT_{\le D} \,:\, |E(B)| < |E(A)|} (\const(A,B)+o(1)) \, G_B(\bY) \\
&\quad =: \sum_{B \in \cT_{\le D} \,:\, |E(B)| < |E(A)|} \const(A,B) \, G_B(\bY) + \PolErr_{A,4}
\end{align*}
where $\EE[\PolErr_{A,4}^2] = o(1)$ because $\EE[G_B^2] = O(1)$ (similarly to the proof of Lemma~\ref{lem:E_A2}).

\begin{proof}[Proof of Lemma~\ref{lem:change-basis}]
Summarizing the above, we have shown how to write
\[ \PolH_A = \const(A) \Poly_\emptyset - \PolErr_{A,0} + G_A \]
where
\[ G_A = \Poly_A + \PolErr_{A,1} - \PolErr_{A,2} - \PolErr_{A,3} - \PolErr_{A,4} - \sum_{B \in \cT_{\le D} \,:\, |E(B)| < |E(A)|} \const(A,B) \, G_B \, . \]
Using induction on $|E(A)|$ we can apply the same procedure to expand $G_B$ in the basis
 $\{\Poly_C : C \in \cT_{\le D}, \, |E(C)| < |E(B)|\}$ plus an error
  term. Recalling~\eqref{eq:triangle-error}, this now gives the desired expansion for $\PolH_A$.
\end{proof}

\section{Proof of Lemma \ref{lemma:SEVec}}
\label{app:LemmaSE}

Let $(\bZ(t))_{t\in\ZZ}$,  $\bZ(t)\in\reals^{n\times n}$ be 
a collection of i.i.d.\ copies of $\bZ$, and define 
 \begin{align}
 \bY(t) = \frac{1}{\sqrt{n}}\btheta\btheta^{\sT} +\bZ(t)\, .
 \end{align}
 We will use the sequence of random matrices $\bY_*:=\{\bY(t)\}$
 uniquely as a device for algorithm analysis and 
 not in the actual estimation procedure.
 
 We  extend the definition of tree structured polynomials 
 (cf.\ Eq.~\eqref{eq:F_A})
 to such sequences of random matrices via
 \begin{equation}\label{eq:F_A_Gen}
\Poly^t_{\Tree}(\bY_*) = \frac{1}{\sqrt{|\nr(\Tree)|}}
 \sum_{\phi \in \nr(\Tree)} \, 
\prod_{(i,j) \in E(\Tree)} Y_{\phi(i),\phi(j)}
(t-\dist_{\Tree}((i,j),\root))\, .
\end{equation}
Here $\dist_{\Tree}((i,j),\root) :=\max(\dist_{\Tree}(i,\root),
\dist_{\Tree}(j,\root))$.  When the argument is a single matrix $\bY$,
then $\Poly^t_{\Tree}(\bY)$ is defined by applying Eq.~\eqref{eq:F_A_Gen} to the sequence of matrices 
given by $\bY(s) =\bY$  for all $s\in\ZZ$
(thus recovering Eq~\eqref{eq:F_A}). 

The random variable $\Poly^t_{\Tree}(\bY_*)$ does depend on $t$. However 
its distribution is independent of $t$. We will therefore often omit the superscript $t$.
(In the case of $\Poly^t_{\Tree}(\bY)$, the random variable itself does not depend on $t$.)

We define a subset of the family of non-reversing 
labelings of $\Tree\in\cT_{\le D}$. 
\begin{definition}\label{def:StronglyNR}
A labeling $\phi$ of $\Tree \in \cT_{\le D}$ is said to be 
\emph{strongly non-reversing} if it is non-reversing and
for any two edges $(i,j)$, $(k,l)\in E(\Tree)$,
with $\dist_{\Tree}((i,j),\root)\neq \dist_{\Tree}((k,l),\root)$,
we have $(\phi(i),\phi(j))\neq (\phi(k),\phi(l))$ (as unordered pairs). 
We denote by  $\nrs(\Tree)$  the set of all
strongly  non-reversing labelings of $\Tree$.

A pair of labelings $\phi_1\in\nr(\Tree_1)$, $\phi_2\in(\Tree_2)$
is said to be jointly strongly non-reversing if
each of them is strongly non-reversing and  
$\dist_{\Tree_1}((i,j),\root)\neq \dist_{\Tree_2}((k,l),\root)$
implies $(\phi_1(i),\phi_1(j))\neq (\phi_2(k),\phi_2(l))$.
We denote by $\nrs(\Tree_1,\Tree_2)$ the set of such pairs.

We define the modified polynomials 
$\Polys^t_{\Tree}(\bY)$, $\Polys^t_{\Tree}(\bY_*)$
by restricting the sum  to $\nrs(\Tree)$ in Eqs.~\eqref{eq:F_A},
 \eqref{eq:F_A_Gen}. 
 
We also define
\begin{align*}
&\Poly^t_{\Tree_1,\Tree_2}(\bY_*) := \frac{1}{\sqrt{|\nr(\Tree_1)|\cdot|\nr(\Tree_2)|}}\;\cdot\\
& \sum_{(\phi_1,\phi_2) \in \nrs(\Tree_1,\Tree_2)} \, 
\prod_{(i,j) \in E(\Tree_1)} \bY_{\phi_1(i),\phi_1(j)}(t-\dist_{\Tree_1}((i,j),\root))
\prod_{(i,j) \in E(\Tree_2)} \bY_{\phi_2(i),\phi_2(j)}(t-\dist_{\Tree_2}((i,j),\root))\, .
\end{align*}
As before $\Poly^t_{\Tree_1,\Tree_2}(\bY)$ is obtained from the above definition by $\bY(s) =\bY$  for all $s\in\ZZ$.
\end{definition}

As we next see, the moving from non-reversing to
strongly non-reversing embeddings has a negligible effect.
\begin{lemma}\label{lemma:time}
Assume $\pi_{\Theta}$ to have finite moments of all order, and 
$|\psi(\theta)|\le B(1+|\theta|)^B$ for a constant $B$. Then, for any fixed 
$\Tree,\Tree_1,\Tree_2\in \cT_{\le D}$, there exist constants
$C_* = C_*(B,\Tree)$, $C_{\#} = C_{\#}(B,\Tree_1,\Tree_2)$
such that
\begin{equation}\label{eq:Strong1}
\left|\E[\psi(\theta_1)\Poly_{\Tree}(\bY)]-\E[\psi(\theta_1)\Polys_{\Tree}(\bY)]
\right|\le \frac{C_*}{\sqrt{n}}\, ,\;\;\;\;
\left|\E[\Poly_{\Tree_1}(\bY)\Poly_{\Tree_2}(\bY)]-
\E[\Polys_{\Tree_1,\Tree_2}(Y)]
\right|\le \frac{C_{\#}}{\sqrt{n}} \, ,
\end{equation}
\begin{equation}\label{eq:Strong2}
\left|\E[\psi(\theta_1)\Poly^t_{\Tree}(\bY_*)]-\E[\psi(\theta_1)\Polys^t_{\Tree}(\bY_*)]
\right|\le \frac{C_*}{\sqrt{n}}
,\;\;\;\;
\left|\E[\Poly^t_{\Tree_1}(\bY_*)\Poly^t_{\Tree_2}(\bY_*)]-
\E[\Polys^t_{\Tree_1,\Tree_2}(\bY_*)]
\right|\le \frac{C_{\#}}{\sqrt{n}}
\, .
\end{equation}
\end{lemma}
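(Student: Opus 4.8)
The plan is to reduce all four estimates to a graph-counting argument of the kind used throughout Appendix~\ref{sec:technical-app}, the only genuinely delicate point being a purely combinatorial fact about non-reversing labelings. For the first estimate in~\eqref{eq:Strong1}, write $\Poly_{\Tree}(\bY)-\Polys_{\Tree}(\bY) = |\nr(\Tree)|^{-1/2}\sum_{\phi\in\nr(\Tree)\setminus\nrs(\Tree)}\bY^{\phi}$, and likewise express the other three differences as sums over labelings (or labeling pairs) that fail to be strongly non-reversing. In each case expand the relevant expectation and group the terms by the isomorphism type of the image multigraph $\balpha=\sh(\phi;\Tree)$ (resp.\ the pair $(\sh(\phi_1;\Tree_1),\sh(\phi_2;\Tree_2))$), i.e.\ by which tree-vertices are identified and which tree-edges are sent to the same pair. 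Exactly as in the proofs of Lemmas~\ref{lem:M-lim} and~\ref{lem:change-basis}, a fixed image type with $v$ image-vertices is realized by $\Theta(n^{v-1})$ labelings, the prefactor $|\nr(\Tree)|^{-1/2}$ is $\Theta(n^{-|E(\Tree)|/2})$, and, writing $Y_{ij}=\theta_i\theta_j/\sqrt n + Z_{ij}$ and integrating out $\bZ$, one gets $|\EE[\psi(\theta_1)\bY^{\phi}]|\le \const\cdot n^{-\odd(\balpha)/2}$, where $\odd(\balpha)$ denotes the number of edges of $\balpha$ of odd multiplicity (the residual polynomial in $\btheta$ has bounded expectation against $\psi(\theta_1)$, since $\pi_{\Theta}$ has all moments and $|\psi(\theta)|\le B(1+|\theta|)^B$). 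Hence the contribution of a given image type is $O\big(n^{(v-1)-|E(\Tree)|/2-\odd(\balpha)/2}\big)$.

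It remains to bound this exponent. Writing $\ell$ for the number of distinct edges of $\balpha$ and $c=\ell-v+1\ge 0$ for its cyclomatic number (so $v-1=\ell-c$), and using that an even-multiplicity edge contributes at least $1$, and an odd-multiplicity edge of multiplicity $\ge 3$ at least $2$, to $\sum_e(\alpha_e-1)=|E(\Tree)|-\ell$, a short computation shows that the exponent is at most $-c$ minus the number of odd-multiplicity edges of multiplicity at least $3$ minus the total excess of even multiplicities beyond $2$. In particular the exponent is always $\le 0$, and it is $\le -1$ unless $c=0$, the underlying simple graph of $\balpha$ is a tree, and every edge of $\balpha$ has multiplicity $1$ or $2$. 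So, apart from these exceptional image types, every type contributes $O(n^{-1})$; since there are only $O(1)$ types, this gives the asserted $\bY$-estimates in~\eqref{eq:Strong1} (in fact with $1/\sqrt n$ improved to $1/n$). For the $\bY_*$-estimates in~\eqref{eq:Strong2} the same scheme applies and is even simpler: any violation of joint strong non-reversing forces two tree-edges at different graph-distances from the root to receive the same labeled pair, which in the time-indexed model means a product $Y_{ij}(s)Y_{ij}(s')$ with $s\ne s'$, i.e.\ a product of entries of two independent copies $\bY(s),\bY(s')$; its expectation is $\theta_i^2\theta_j^2/n$ rather than $\Theta(1)$, so every such term carries an extra $1/n$ and no exceptional type survives.

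Thus everything reduces to the combinatorial claim that a non-reversing labeling $\phi$ of $\Tree\in\cT_{\le D}$ whose image $\balpha$ has only single and double edges and whose underlying simple graph is a tree is automatically strongly non-reversing. Suppose not: then $\balpha$ has a double edge $\{a,b\}$ arising from two tree-edges $e\ne f$ with $\dist_{\Tree}(e,\circ)\ne\dist_{\Tree}(f,\circ)$. First, $e$ and $f$ cannot share a vertex of $\Tree$: if they shared their deeper endpoint, that endpoint would have two parents; if the shared vertex were the child-endpoint of one and the parent-endpoint of the other, then $\phi$ would identify a vertex with its grandparent, which is impossible for a non-reversing labeling; and if they shared their parent-endpoint the two edges would lie at the same distance from the root. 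Next, consider the unique path in $\Tree$ between the two deeper endpoints of $e$ and $f$: in the ``aligned'' case (child-endpoint identified with child-endpoint) its $\phi$-image is a closed walk in the tree underlying $\balpha$, and in the ``crossed'' case it is a walk whose reduced form is the single edge $\{a,b\}$. Any walk in a tree must backtrack at its point of maximal distance from the basepoint, and one checks it must keep backtracking; but a backtrack corresponds to two tree-vertices at distance $2$ with equal $\phi$-image, which a non-reversing labeling permits only when those vertices are siblings, and inspection of the up/down structure of the path shows this can happen at exactly one location, its least common ancestor. Reconciling the length of the path with this unique admissible backtracking pattern forces the two deeper endpoints of $e$ and $f$ to be at equal distance from the root, contradicting $\dist_{\Tree}(e,\circ)\ne\dist_{\Tree}(f,\circ)$. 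This excludes the exceptional image types and completes the proof.

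The main obstacle is this last step: turning the informal ``closed walk in a tree versus non-reversing backtracks'' argument into a rigorous case analysis (aligned versus crossed, together with the possible positions of the least common ancestor along the path). Everything else is routine bookkeeping of powers of $n$ following the template of Lemmas~\ref{lem:M-lim} and~\ref{lem:change-basis}.
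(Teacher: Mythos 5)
The overall strategy---expressing the differences as sums over non-reversing but not strongly non-reversing labelings, grouping by isomorphism type, and bounding each type's contribution by a power of $n$---is exactly the paper's, and your bookkeeping of powers of $n$ is sound. One modest refinement: you track all odd-multiplicity edges (not just multiplicity-one edges, as the paper does) in the conditional expectation bound, which gives a tighter exponent and narrows the ``exceptional'' image types to those whose simple projection is a tree with only single and double edges. This is correct, and it means you only need a weaker combinatorial claim than the paper uses.

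The more important point is that the combinatorial fact at the heart of the proof---which you correctly flag as the gap---is genuinely non-trivial, and the paper itself does not prove it either: it simply asserts $\lp(\Pi)\ge 1$ for all $\Pi\in\pat''(\Tree)$, where $\lp$ is described as ``number of self-loops'' but, from the surrounding chain of inequalities, must mean the cyclomatic number of the simple projection. Your closed-walk/backtracking sketch is in the right spirit and could be made rigorous along the lines you indicate, but a cleaner route closes the gap, proves the paper's stronger assertion, and dispenses with the aligned/crossed case split: since $\phi$ is non-reversing, any two tree-vertices at distance $2$ and at different depths receive distinct labels, so the image of every root-to-descendant path is a non-backtracking walk in $\sh(\phi;\Tree)$. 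If the simple projection of $\sh(\phi;\Tree)$ were a tree, every such walk would be a geodesic, forcing $\dist(\phi(\circ),\phi(v))=\dist_\Tree(\circ,v)$ for every tree-vertex $v$; two tree-edges at different depths would then map to image edges with different unordered pairs of distances from $\phi(\circ)$, so they could not coincide---i.e.\ $\phi$ would automatically be strongly non-reversing. The same depth-preservation argument, applied to each labeling separately, also handles the joint condition for the second inequality in~\eqref{eq:Strong1}. Finally, your treatment of the $\bY_*$ estimates (any repeated pair occurring at two distinct times costs an extra $1/n$) is correct and pleasantly explicit, whereas the paper merely says the argument is the same.
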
 
\begin{proof}
We will prove Eq.~\eqref{eq:Strong1} since \eqref{eq:Strong2} follows by the same argument.
Considering the first bound, note that 
\begin{align}
&\left|\E[\psi(\theta_1)\Poly_{\Tree}(\bY)]-\E[\psi(\theta_1)\Polys_{\Tree}(\bY)]\right|\nonumber\\
&\le \frac{1}{\sqrt{|\nr(\Tree)|}}\sum_{\phi\in \nr(\Tree)\setminus \nrs(\Tree)}
\left|\E\Big[\psi(\theta_1) \prod_{(i,j) \in E(\Tree)} Y_{\phi_1(i),\phi_1(j)} \Big]\right|\nonumber\\
&\le  \frac{1}{\sqrt{|\nr(\Tree)|}}\sum_{\Pi\in \pat''(\Tree)}\sum_{\phi\in\emb(\Pi)}
\left|\E\Big[\psi(\theta_1) \prod_{(i,j) \in E(\Tree)} Y_{\phi_1(i),\phi_1(j)} \Big]\right|\, ,
\label{eq:SumPatterns}
\end{align}
where in the last line  $\pat''(\Tree)$ denotes the equivalence classes of 
$\nr(\Tree)\setminus\nrs(\Tree)$ under rooted graph homomorphisms.
Now the expectation in the last line only depends on $\Pi$. 
Taking expectation conditional on $\theta$, we get
\begin{align*}
\Big|\E\Big[\psi(\theta_1) \prod_{(i,j) \in E(\Tree)} Y_{\phi_1(i),\phi_1(j)} \;\Big\vert\; \theta\Big]\Big|
\le \psi(\theta_1)\prod_{\substack{1 \le i\le j\le n \\ \alpha_{ij}(\phi) = 1}}\frac{\theta_i\theta_j}{\sqrt{n}}
\prod_{\substack{1 \le i\le j\le n \\ \alpha_{ij}(\phi) \ge 2}}2^{\alpha_{ij}(\phi)-1}
\left(\frac{|\theta_i\theta_j|}{\sqrt{n}} + C\sqrt{\alpha_{ij}}\right)\, .
\end{align*}
Here $\alpha(\phi)$ is the image of embedding $\phi$.
Next taking expectation with respect to $\theta$ and using that all moments are finite
\begin{align}
\Big|\E\Big[\psi(\theta_1) \prod_{(i,j) \in E(\Tree)} Y_{\phi_1(i),\phi_1(j)} \Big]\Big|
\le C(\psi,\Pi)\, n^{-e_1(\Pi)/2}\, ,
\end{align}
where $e_1(\Pi)$ is the number of edges with multiplicity $1$ in $\Pi$.

Using this bound in Eq.~\eqref{eq:SumPatterns} and noting that $|\emb(\pi)|\le n^{v(\Pi)-1}$
(where $v(\Pi)$ is the number of vertices in  $\Pi$), we get
\begin{align*}
&\left|\E[\psi(\theta_1)\Poly_{\Tree}(\bY)]-\E[\psi(\theta_1)\Polys_{\Tree}(\bY)]\right|\\
&\le  C\frac{1}{\sqrt{|\nr(\Tree)|}}\sum_{\Pi\in \pat''(\Tree)} n^{v(\Pi)-1-e_1(\Pi)/2}\\
&\le C' \sum_{\Pi\in \pat''(\Tree)}n^{v(\Pi)-1-e_1(\Pi)/2-m(\Pi)/2}\, ,
\end{align*}
where $m(\pi)$ is the number of edges of $\Pi$ counted with their multiplicity.
In the last step we used the fact that $m(\pi)=V(\Tree)-1$ and 
$|\nr(\Tree)|\ge C_0 n^{V(\Tree)-1}$.

Next notice that, denoting by $e_{\ge 2}(\Pi)$ the number of edges in $\Pi$, this time without counting
multiplicity, we have
\begin{align*}
 \frac{1}{2}e_1(\Pi)+\frac{1}{2}m(\Pi)+1-v(\Pi)&\ge e_1(\Pi)+e_{\ge 2}(\Pi)+1-v(\Pi)\\
 & \ge \lp(\Pi)\, .
 \end{align*}
 where $\lp(\Pi)$ is the number of self-loops in the projection of $\Pi$ onto simple graphs 
 (i.e.\ the graph obtained by replacing every multi-edge in $\Pi$ by a single edge).
 For $\Pi\in\pat''(\Tree)$, we have $\lp(\Pi)\ge 1$, whence the claim follows.
 
 The proof for the second equation in Eq.~\eqref{eq:Strong1} is very similar.
  Using the shorthand $ \nr\setminus \nrs(\Tree_1,\Tree_2):=
  \nr(\Tree_1)\times \nr(\Tree_2)
  \setminus \nrs(\Tree_1,\Tree_2)$, we
   begin by writing
 \begin{align*} 
& \left|\E[\Poly_{\Tree_1}(\bY)\Poly_{\Tree_2}(\bY)]-
\E[\Polys_{\Tree_1,\Tree_2}(\bY)]
\right|\\
 &\le \frac{1}{\sqrt{|\nr(\Tree_1)|\cdot|\nr(\Tree_2)|}}
 \sum_{(\phi_1,\phi_2) \in \nr\setminus \nrs(\Tree_1,\Tree_2)}
 \Big| \E\Big\{
\prod_{(i,j) \in E(\Tree_1)} Y_{\phi_1(i),\phi_1(j)}
\prod_{(i,j) \in E(\Tree_2)} Y_{\phi_2(i),\phi_2(j)}
\Big\}\Big|\\
&\le  \frac{1}{\sqrt{|\nr(\Tree_1)| \cdot|\nr(\Tree_2)|}}
\sum_{\Pi\in \pat''(\Tree_1,\Tree_2)}
\sum_{(\phi_1,\phi_2)\in\emb(\Pi)}
\left|\E\Big[\prod_{(i,j) \in E(\Tree_1)} Y_{\phi_1(i),\phi_1(j)} 
\prod_{(i,j) \in E(\Tree_2)} Y_{\phi_2(i),\phi_2(j)}\Big]\right|\, .
\end{align*}
The only difference with respect to the previous case lies in the 
fact that $\pat''(\Tree_1,\Tree_2)$ is a collection of
graphs with edges labeled by $\{1,2\}$. (It is the set of
equivalence classes of $\nr\setminus \nrs(\Tree_1,\Tree_2)$ under
graph homomorphisms.)

Proceeding as before, 
 \begin{align*} 
\left|\E\Big[\prod_{(i,j) \in E(\Tree_1)} Y_{\phi_1(i),\phi_1(j)} 
\prod_{(i,j) \in E(\Tree_2)} Y_{\phi_2(i),\phi_2(j)}\Big]\right|
\le C(\Pi) \, n^{-e_1(\Pi)}\, .
\end{align*}
Therefore
 \begin{align*} 
& \left|\E[\Poly_{\Tree_1}(\bY)\Poly_{\Tree_2}(\bY)]-
\E[\Polys_{\Tree_1,\Tree_2}(\bY)]
\right|\le \frac{1}{\sqrt{|\nr(\Tree_1)| \cdot|\nr(\Tree_2)|}}
\sum_{\Pi\in \pat''(\Tree_1,\Tree_2)}
\sum_{(\phi_1,\phi_2)\in\emb(\Pi)} C(\Pi)\, 
 n^{-e_1(\Pi)} \, .
 \end{align*}
Recall that $|\nr(\Tree_1)|\ge C_0 n^{V(\Tree_1)-1}$,
$|\nr(\Tree_2)|\ge C_0 n^{V(\Tree_2)-1}$ and, as before,
$V(\Tree_1)+V(\Tree_2)-2 = m(\Pi)$. 
Further $|\emb(\Pi)|\le C_1 \, n^{v(\Pi)-1}$, whence
 \begin{align*} 
& \left|\E[\Poly_{\Tree_1}(\bY)\Poly_{\Tree_2}(\bY)]-
\E[\Polys_{\Tree_1,\Tree_2}(\bY)]
\right|\le C_2 \, \sum_{\Pi\in \pat''(\Tree_1,\Tree_2)}
n^{v(\Pi)-1-e_1(\Pi)/2-m(\Pi)/2}\, .
\end{align*}
The last sum is upper bounded by $C\, n^{-1/2}$ by the same argument as above.
\end{proof}

Consider now a  term corresponding to $\phi\in\nrs(G)$ of the expectation
$\E[\psi(\theta_1)\Polys_{\Tree}(Y_*) \, | \, \theta]$. By construction, this does not involve
moments $\E[Y^a_{ij}(t_1)Y^b_{ij}(t_2)\cdots \,|\, \theta]$ for $t_1\neq t_2$.
Therefore the expectations coincide when considering $\Polys_{\Tree}(\bY_*)$
or $\Polys_{\Tree}(Y)$. In other words, we have the identities
\begin{align}
\E[\psi(\theta_1)\Polys_{\Tree}(\bY_*)] = \E[\psi(\theta_1)\Polys_{\Tree}(\bY)]\, ,
\;\;\;\;\; \E[\Polys_{\Tree_1,\Tree_2}(\bY_*)] = \E[\Polys_{\Tree_1,\Tree_2}(\bY)]\, .
\end{align}
We therefore proved the following consequence of Lemma \ref{lemma:time}.
\begin{corollary}\label{coro:YYstar}
Under the assumptions of Lemma \ref{lemma:time}, there exists $C_0=C_0(\Tree,B)$
such that
\begin{align}
\left|\E[\psi(\theta_1)\Poly_{\Tree}(\bY)]-\E[\psi(\theta_1)\Poly_{\Tree}(\bY_*)]
\right|\le \frac{C_0}{\sqrt{n}}\, 
\, .
\end{align}
\end{corollary}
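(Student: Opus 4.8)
The plan is to bridge $\Poly_{\Tree}(\bY)$ and $\Poly_{\Tree}(\bY_*)$ through the strongly non-reversing polynomial $\Polys_{\Tree}$, exploiting the fact that on strongly non-reversing labelings the two noise models produce \emph{identical} expectations. Concretely, I would start from the triangle inequality
\[
\big|\E[\psi(\theta_1)\Poly_{\Tree}(\bY)]-\E[\psi(\theta_1)\Poly_{\Tree}(\bY_*)]\big|
\le \big|\E[\psi(\theta_1)\Poly_{\Tree}(\bY)]-\E[\psi(\theta_1)\Polys_{\Tree}(\bY)]\big|
+\big|\E[\psi(\theta_1)\Polys_{\Tree}(\bY)]-\E[\psi(\theta_1)\Polys_{\Tree}(\bY_*)]\big|
+\big|\E[\psi(\theta_1)\Polys_{\Tree}(\bY_*)]-\E[\psi(\theta_1)\Poly_{\Tree}(\bY_*)]\big| ,
\]
and then bound the first and third terms by $C_*/\sqrt n$ using the first estimate of~\eqref{eq:Strong1} and of~\eqref{eq:Strong2} in Lemma~\ref{lemma:time} respectively (here using that the distribution of $\Poly^t_{\Tree}(\bY_*)$, hence the expectations involved, does not depend on $t$).

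The crux is to show that the middle term vanishes \emph{exactly}, i.e.\ $\E[\psi(\theta_1)\Polys_{\Tree}(\bY)]=\E[\psi(\theta_1)\Polys_{\Tree}(\bY_*)]$. Fix a labeling $\phi\in\nrs(\Tree)$ and condition on $\btheta$. In the $\phi$-term of $\Polys^t_{\Tree}(\bY_*)$ the factor attached to an edge $(i,j)$ is $Y_{\phi(i),\phi(j)}(t-\dist_{\Tree}((i,j),\root))$, so its time index depends only on that edge's distance from the root. The defining property of a strongly non-reversing labeling is precisely that two edges mapped by $\phi$ to the same unordered pair of $[n]$ must lie at the same distance from the root; hence all factors carried by a given pair $(a,b)$ are evaluated at one common time index. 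Since $\{\bY(s)\}_{s\in\ZZ}$ are i.i.d.\ copies of $\bY$ and, conditionally on $\btheta$, distinct pairs $(a,b)$ are mutually independent, the conditional expectation of the product factorizes over pairs, and each factor equals the corresponding conditional moment of $\bY$ --- the same value one obtains from the $\phi$-term of $\Polys_{\Tree}(\bY)$. Multiplying by $\psi(\theta_1)$, averaging over $\btheta$, and summing over $\phi\in\nrs(\Tree)$ gives the identity.

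Combining the three bounds then yields the corollary with $C_0=2C_*$. I expect the only subtlety to be the bookkeeping in the middle step: making precise that the ``strongly non-reversing'' constraint is exactly the condition ensuring that every matrix coordinate appearing in a term sees a single noise realization, after which the i.i.d.-across-time structure of $\bY_*$ forces the two expectations to coincide term by term. Everything else is the triangle inequality together with a direct appeal to Lemma~\ref{lemma:time}.
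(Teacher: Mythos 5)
Your proof is correct and matches the paper's approach: the paper likewise observes that on strongly non-reversing labelings each unordered pair carries a single time index, so conditionally on $\btheta$ the factorization gives $\E[\psi(\theta_1)\Polys_{\Tree}(\bY_*)]=\E[\psi(\theta_1)\Polys_{\Tree}(\bY)]$, and then applies Eqs.~\eqref{eq:Strong1} and~\eqref{eq:Strong2} to control the remaining differences. Your explicit triangle-inequality decomposition with $C_0=2C_*$ is just a slightly more spelled-out version of the same argument.
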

We now consider a message passing of the same form as Eq.~\eqref{eq:MP1},
but with $\bY$ replaced by $\bY(t)$ at iteration $t$. Namely we define
\begin{align}
\bms_{i\to j}^{t+1}&= \frac{1}{\sqrt{n}}\sum_{k\in [n]\setminus \{i,j\}}Y_{ik}(t)F_t(\bms^t_{k\to i})\, , 
\;\;\;\;\; \, \;\; \bms_{i\to j}^0  =\E[\Theta]\;\;\forall i\neq j\, 
 ,\label{eq:MP1T}
\end{align}
and:
\begin{align}
\bms_i^{t+1} &= \frac{1}{\sqrt{n}}\sum_{k\in [n]\setminus \{i\}}
Y_{ik}(t)F_t(\bms^t_{k\to i}) \, ,\\
\hat{\bms}_{i}^{t+1}&= F_t(\bms^{t+1}_{i})\, .\label{eq:MP2T}
\end{align}
Analogously to the case of the original iteration, we can write these 
as sums over polynomials $\Poly^{t+1}_{\Tree}(\bY_*)$, with coefficients that have a limit as 
$n\to\infty$.
Therefore, we have a further consequence of Lemma \ref{lemma:time} and
the previous corollary.
\begin{corollary}\label{coro:MM_MS}
Assume $\pi_{\Theta}$ to have finite moments of all order, and 
$\psi:\reals^{\dim+1}\to\reals$ be a polynomial with coefficients bounded by $B$
and maximum degree $B$.

Then there exists $C_0=C_0(T,B)$
such that, for all $t\le T$,
\begin{align}
&\left|\E[\psi(\theta_1,\bmm_1^{t})]-\E[\psi(\theta_1,\bms_1^{t})]
\right| \le \frac{C_0}{\sqrt{n}}\, ,\\
&\left|\E[\psi(\theta_1,\bmm_{1\to 2}^{t})]-\E[\psi(\theta_1,\bms_{1\to 2}^{t})]
\right|\le \frac{C_0}{\sqrt{n}}\, .
\end{align}
\end{corollary}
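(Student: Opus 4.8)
The plan is to express the message-passing iterates as linear combinations of tree-structured polynomials, reduce $\E[\psi(\theta_1,\cdot)]$ to mixed moments of such polynomials, and then transfer the estimates already established in Lemma~\ref{lemma:time} and Corollary~\ref{coro:YYstar}. First I would unroll the recursions \eqref{eq:MP1T}--\eqref{eq:MP2T} (and, in parallel, \eqref{eq:MP1}, \eqref{eq:MP2a}, \eqref{eq:MP2b}): since each $F_t$ is a polynomial of degree and coefficients bounded by $B$, an induction on $t\le T$ shows that every coordinate of $\bms_1^t$ and of $\bms_{1\to 2}^t$ can be written as a finite sum $\sum_{\Tree\in\cT_{\le D'}} b_{n,\Tree}\,\Poly^t_{\Tree}(\bY_*)$ with $D'=D'(T,B)$ and with coefficients $b_{n,\Tree}$ that depend on $n$ but are bounded uniformly in $n$ --- the $1/\sqrt n$ normalization in the iteration exactly balances the number of free summation indices created at each level, just as in the proof of Proposition~\ref{propo:TreetoAMP}. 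The iterates $\bmm_1^t$, $\bmm_{1\to 2}^t$ admit the same expansion, with the same coefficients and with $\Poly^t_{\Tree}(\bY_*)$ replaced by $\Poly_{\Tree}(\bY)$, since the two recursions differ only in that $\bms$ uses the independent copies $\bY(0),\bY(1),\dots$ in place of the single matrix $\bY$.

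Since $\psi$ is a polynomial of degree at most $B$ with coefficients bounded by $B$, substituting these expansions and multiplying out shows that $\E[\psi(\theta_1,\bms_1^t)]$ is a linear combination, with $n$-uniformly bounded coefficients and an $n$-independent number of terms, of quantities $\E[\theta_1^a\,\Poly^t_{\Tree_1}(\bY_*)\cdots\Poly^t_{\Tree_r}(\bY_*)]$ with $r\le B$ and each $\Tree_i\in\cT_{\le D'}$; the same holds for $\E[\psi(\theta_1,\bmm_1^t)]$ with $\bY$ in place of $\bY_*$, and likewise for the $1\to 2$ iterates. It therefore suffices to show, uniformly over the finitely many $a,r,\Tree_1,\dots,\Tree_r$ that occur,
\[
\Big|\,\E\big[\theta_1^a\, \Poly_{\Tree_1}(\bY)\cdots\Poly_{\Tree_r}(\bY)\big]-\E\big[\theta_1^a\,\Poly^t_{\Tree_1}(\bY_*)\cdots\Poly^t_{\Tree_r}(\bY_*)\big]\,\Big|=O(n^{-1/2})\,,
\]
together with the (easier) fact that each individual expectation appearing here is $O(1)$; the latter follows from the same overlap-pattern counting used throughout Section~\ref{app:LemmaSE} and Section~\ref{sec:pf-M-lim}.

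The displayed bound is the $r$-factor analogue of the two-factor estimates of Lemma~\ref{lemma:time} and Corollary~\ref{coro:YYstar}, and I would prove it by the identical mechanism. Expand each $\Poly_{\Tree_i}$ as a sum over non-reversing labelings and split the joint labelings into those that are jointly strongly non-reversing --- no two edges lying at different root-distances in their respective trees receive the same unordered label pair, in the sense of Definition~\ref{def:StronglyNR} --- and the rest. On the jointly strongly non-reversing part the contribution is literally the same for $\bY$ and for $\bY_*$, since along such a labeling every entry $Y_{ij}$ occurs only at a single time stamp, so the mixed moment is unchanged by using independent copies. On the complementary part, group labelings into homomorphism classes (patterns $\Pi$ whose edges are colored by which tree they come from); taking expectations first conditionally on $\btheta$ and then over $\btheta$, exactly as in Lemma~\ref{lemma:time}, bounds each pattern's contribution by $\const(\Pi)\cdot n^{v(\Pi)-1-\frac12 e_1(\Pi)-\frac12 m(\Pi)}$, and the inequality $\frac12 e_1(\Pi)+\frac12 m(\Pi)+1-v(\Pi)\ge\lp(\Pi)\ge 1$ --- valid because any violation of ``jointly strongly non-reversing'' forces a coincidence, hence a loop in the simple projection of $\Pi$ --- makes the sum over the finitely many patterns $O(n^{-1/2})$. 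Summing the per-monomial bounds over the $n$-independent number of monomials yields the corollary. I expect the main obstacle to be purely bookkeeping: checking that this graph-theoretic inequality still closes when $r\le B$ trees are overlaid rather than one or two, and that the coefficients produced both by unrolling the iteration and by expanding $\psi$ are genuinely bounded uniformly in $n$. Both are routine extensions of arguments already made for $r\in\{1,2\}$, but must be set up with care.
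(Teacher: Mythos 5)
Your proposal follows the paper's own route: expand $\psi$ into monomials, express each coordinate of the iterates $\bmm^t_1$ (resp.\ $\bms^t_1$) as a bounded linear combination of $\Poly_{\Tree}(\bY)$ (resp.\ $\Poly^t_{\Tree}(\bY_*)$), multiply out, and transfer between $\bY$ and $\bY_*$ via the strongly-non-reversing pattern counting of Lemma~\ref{lemma:time} and Corollary~\ref{coro:YYstar}. The one place where you are more explicit than the paper---and usefully so---is in handling the products $\prod_i \Poly_{\Tree_i}$ that arise from monomials of degree $\ge 2$: you keep them as genuine $r$-factor products and sketch the $r$-factor analogue of Lemma~\ref{lemma:time}, whereas the paper compresses this by ``grouping'' the products $\prod_{\ell,k}\Poly_{\Tree_{\ell,k}}$ into single tree polynomials $\Poly_{\cup_{\ell,k}\Tree_{\ell,k}}$ and then applying the one-tree Corollary~\ref{coro:YYstar}. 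Strictly speaking, $\prod_i \Poly_{\Tree_i}(\bY)$ is not a constant multiple of $\Poly_{\cup\Tree_i}(\bY)$: the non-reversing labelings of the glued-at-root union are a strict subset of the tuples of labelings of the pieces, so the paper's grouping step has to be read as ``equal up to lower-order corrections,'' and the $r$-factor pattern count is what actually justifies it. Your version names this explicitly and is the cleaner presentation of the same mechanism.

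Two small points worth flagging. First, the inequality $\tfrac12 e_1(\Pi)+\tfrac12 m(\Pi)+1-v(\Pi)\ge\lp(\Pi)\ge 1$ that you invoke is, for $r\ge 3$ overlaid trees, not an automatic consequence of the $r\in\{1,2\}$ case; as you note yourself, one must re-derive it for colored patterns on $r$ trees, checking that a failure of ``jointly strongly non-reversing'' across any pair of the $r$ trees still forces a cycle in the simple projection. This is indeed routine but is the content of the step, not a formality. Second, the paper's recursions as printed use different initializations, $\bmm^0_{i\to j}=\bzero$ in Eq.~\eqref{eq:MP1} versus $\bms^0_{i\to j}=\E[\Theta]$ in Eq.~\eqref{eq:MP1T}; taken literally this would already break your claim that the two iterates ``admit the same expansion, with the same coefficients'' at $t=1$ unless $F_0(\bzero)=F_0(\E[\Theta]\bfone)$. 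This is almost surely a typo in the paper, but your proof implicitly assumes matching initializations and should say so.
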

\begin{proof} 
We expand the polynomial $\psi$ as
\begin{align*}
\psi(\theta,\bx) = \sum_{\bm\in\naturals^{\dim}}\psi_{\bm}(\theta) \bx^{\bm}\, ,\;\;\;\;\;
\bx^{\bm} := \prod_{i=1}^{\dim}x_i^{m_i}\, .
\end{align*}  
Further, $\bmm^t_1$ can be expressed as a sum over $\Tree\in \cT_{\le D}$ and
therefore the same holds for $(\bmm^t_1)^{\bm}$ 
\begin{align*}
(\bmm^t_1)^{\bm} &= \sum_{(\Tree_{\ell,k})_{\ell\le \dim,k\le m_{\ell}}}
\prod_{\ell\le \dim,k\le m_{\ell}}c_{\Tree_{\ell,k}} \prod_{\ell\le \dim,k\le m_{\ell}}
\Poly_{\Tree_{\ell,k}} (\bY)\, ,\\
&= \sum_{\Tree}\overline{c}_{\Tree}(\bm)  \Poly_{\Tree} (\bY)\, ,
\end{align*}
where, in the last row, we grouped terms such that $\cup_{\ell,k}\Tree_{\ell,k}=\Tree$
and (up to combinatorial factors which are independent of $n$), the coefficient $\overline{c}_{\Tree}(\bm)$ is
given by the product  of coefficients $c_{\Tree_{\ell,k}}$. 

Hence
\begin{align*}
\Big|\E[\psi(\theta_1,\hmm_1^{t})]-\E[\psi(\theta_1,\hms_1^{t})]\Big|
\le \sum_{\bm}\sum_{\Tree} \overline{c}_{\Tree}(\bm) 
\Big| \E[\psi_{\bm}(\theta_1)\Poly_{\Tree} (\bY)]-\E[ \psi_{\bm}(\theta_1)\Poly^t_{\Tree} (\bY_*)]\Big|\, ,
\end{align*}
and the claim follows by noting that the sums over $\bm$ and $\Tree$
involve a constant (in $n$) number of terms, the coefficients 
$\overline{c}_{\Tree}(\bm)$ have a limit as $n\to\infty$, and applying
Corollary \ref{coro:YYstar}.
\end{proof}

\begin{lemma}\label{lemma:BoundedMoments}
Assume $\pi_{\Theta}$ to have finite moments of all order, and let $m_0$, $\bm\in\naturals^\dim$
be fixed. Then there exists a constant $C= C(t;\bm)$ independent of $n$ such
that
\begin{align}
\big|\E[(\bms_{i}^t)^{\bm}]\big|\le C\, ,\;\;\;\;\big|\E[(\bms_{i\to j}^t)^{\bm}]\big|\le C\, .
\end{align}
\end{lemma}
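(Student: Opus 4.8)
The plan is to prove both bounds together by induction on $t$, with induction hypothesis at level $t$ the full conclusion of the lemma: for every multi-index $\bm$ there is a finite constant $C(t;\bm)$, independent of $n$ and of the indices $i,j$, with $|\E[(\bms^t_{i\to j})^{\bm}]|\le C(t;\bm)$ and $|\E[(\bms^t_i)^{\bm}]|\le C(t;\bm)$. The base case is trivial, since $\bms^0_{i\to j}=\E[\Theta]$ is a constant; the iterate $\bms^t_i$ (for $t\ge 1$) is produced from the time-$(t-1)$ messages by the same type of update as $\bms^t_{i\to j}$, and is handled together with it. The structural fact that drives the argument is that, in the $\bY_*$ model, $\bms^t_{k\to i}$ is a function of $\bY(0),\dots,\bY(t-1)$ alone, and hence is independent of the fresh matrix $\bY(t)$ used to form $\bms^{t+1}$; no other feature of the message-passing structure is needed.

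Granting the hypothesis at level $t$, I would first record that all absolute moments of the variables $g_{k,\ell}:=(F_t(\bms^t_{k\to i}))_\ell$ are bounded uniformly in $n,i,k$: since $F_t$ is a fixed polynomial (as assumed in Lemma~\ref{lemma:SEVec}), any fixed power of $g_{k,\ell}$ is a bounded-degree polynomial in the coordinates of $\bms^t_{k\to i}$, so applying the hypothesis to multi-indices with even entries bounds all even moments of those coordinates, hence (Jensen/H\"older) all their absolute moments, hence all absolute moments of the $g_{k,\ell}$. The coordinates $\theta_k$ have all moments finite by assumption.

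For the inductive step I would expand, via the update rule~\eqref{eq:MP1T} (and its identical-looking counterpart for $\bms^{t+1}_i$),
\begin{align*}
\E\big[(\bms^{t+1}_i)^{\bm}\big] = \frac{1}{n^{|\bm|/2}}\sum_{\vec k}\E\Big[\prod_{r=1}^{|\bm|}Y_{ik_r}(t)\,g_{k_r,\ell(r)}\Big]\,,
\end{align*}
where $\vec k=(k_1,\dots,k_{|\bm|})$ runs over $([n]\setminus\{i\})^{|\bm|}$ and $\ell(r)$ records which coordinate of $\bms^{t+1}_i$ the $r$-th factor belongs to. Conditioning on $\mathcal{F}:=\sigma(\btheta,\bY(0),\dots,\bY(t-1))$ makes the $g_{k_r,\ell(r)}$ measurable and leaves $\bY(t)$ independent; writing $Y_{ik_r}(t)=\theta_i\theta_{k_r}/\sqrt n+Z_{ik_r}(t)$, expanding over the set $S\subseteq\{1,\dots,|\bm|\}$ of factors contributing their rank-one part, and using that $\E[\prod_{r\notin S}Z_{ik_r}(t)]$ is $O(1)$ and vanishes unless every index in $\{k_r:r\notin S\}$ occurs with even multiplicity, I obtain
\begin{align*}
\Big|\E\Big[\prod_r Y_{ik_r}(t)\,g_{k_r,\ell(r)}\Big]\Big| \le \const(\bm)\sum_S\frac{1}{n^{|S|/2}}\,\One\big[\{k_r:r\notin S\}\text{ has all multiplicities even}\big]\,\E\Big[|\theta_i|^{|S|}\prod_{r\in S}|\theta_{k_r}|\prod_r|g_{k_r,\ell(r)}|\Big]\,,
\end{align*}
and, by generalized H\"older together with the moment bounds of the previous paragraph, the last expectation is $\const(\bm)$, uniformly over $\vec k$. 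It then remains to count: for fixed $S$, the number of $\vec k$ with $\{k_r:r\notin S\}$ all of even multiplicity is at most $n^{|S|}\cdot n^{(|\bm|-|S|)/2}$ (a free choice for each $r\in S$, and a perfect matching among the remaining indices). Hence
\begin{align*}
\big|\E[(\bms^{t+1}_i)^{\bm}]\big| \le \frac{\const(\bm)}{n^{|\bm|/2}}\sum_{S\subseteq\{1,\dots,|\bm|\}}\frac{n^{|S|}\,n^{(|\bm|-|S|)/2}}{n^{|S|/2}} = \const(\bm)\,,
\end{align*}
the powers of $n$ cancelling identically; running the same computation with $k$ restricted to $[n]\setminus\{i,j\}$ handles $\bms^{t+1}_{i\to j}$, which closes the induction.

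The crux is the exact cancellation of powers of $n$ in the last display — the one place where the message-passing scaling genuinely matters — and the thing to verify is that the $n^{-1/2}$ contributed by each rank-one factor, together with the $n^{-1}$ forced by each Gaussian coincidence against the two $n^{-1/2}$ normalizations, precisely offsets both the overall $n^{-|\bm|/2}$ and the $n$-power gained from summing over the free indices. I therefore expect the main technical nuisance to be bookkeeping coinciding indices $k_r=k_{r'}$ carefully — in particular the interaction between the set $S$ and its complement in the counting step — and checking that every H\"older constant is genuinely uniform in $n$, $i$, $j$, and $\vec k$.
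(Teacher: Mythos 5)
The paper states Lemma~\ref{lemma:BoundedMoments} without giving any proof (it is simply asserted before the proof of Lemma~\ref{lemma:MS_SE}), so there is nothing in the text to compare your argument against. Your proof is correct and is the natural one: you induct on $t$, using the defining property of the $\bY_*$-iteration that $\bms^t_{k\to i}$ is measurable with respect to $\sigma(\btheta,\bY(0),\dots,\bY(t-1))$ and hence independent of the fresh matrix $\bY(t)$; you expand $(\bms^{t+1}_{i})^{\bm}$ over index tuples $\vec k$ and split $Y_{ik_r}(t)$ into signal and noise, so that the noise part forces even multiplicities among $\{k_r:r\notin S\}$; you bound the remaining expectation by generalized H\"older using the inductive moment bounds on the coordinates of $\bms^t_{k\to i}$ (hence on the polynomial $F_t$ applied to them) and the moment assumptions on $\pi_\Theta$; and the powers of $n$ cancel exactly, as your final display verifies. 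I checked the exponent bookkeeping: the total power is $-\tfrac{|\bm|}{2}-\tfrac{|S|}{2}+|S|+\tfrac{|\bm|-|S|}{2}=0$ for every $S$, and the base case $\bms^0_{i\to j}=\E[\Theta]$ is immediate. Two minor points worth noting for the write-up: (i) uniformity in $i,j$ is automatic by exchangeability under permutations of $[n]$, so one can fix $i=1$, $j=2$ without loss; (ii) the $m_0$ in the lemma's statement appears to be vestigial and plays no role, consistent with your not referencing it.
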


As a final step towards the proof of Lemma \ref{lemma:SEVec}, we prove the analogous statement
for the modified iteration $(\hms^t_i)$.
\begin{lemma}\label{lemma:MS_SE}
Assume $\pi_{\Theta}$ to have finite moments of all order, and 
$\psi$ to be a fixed polynomial. 
Define the sequence of vectors $\bmu_t\in\reals^{\dim}$ and positive semidefinite matrices
$\bSigma_t\in \reals^{\dim\times \dim}$ via the state evolution equations 
\eqref{eq:GeneralSE1}, \eqref{eq:GeneralSE2}.

Then the following hold (here $\plim$ denotes limit in probability):
\begin{align}
\lim_{n\to\infty}
\E[\psi(\bms^t_1,\theta_1)]= 
\E\psi\big(\bmu_t\Theta+\bG_t,\Theta\big)\, ,\;\;\;\;
\plim_{n\to\infty}\frac{1}{n}\sum_{i=1}^n\psi(\bms^t_{i},\theta_i)= 
\E\psi\big(\bmu_t\Theta+\bG_t,\Theta\big)\, ,\label{eq:T-SEVec1-Convergence-Almost}\\
\lim_{n\to\infty}
\E[\psi(\bms^t_{1\to 2},\theta_1)]= 
\E\psi\big(\bmu_t\Theta+\bG_t,\Theta\big)\, ,\;\;\;\;
\plim_{n\to\infty}\frac{1}{n}\sum_{i=2}^n\psi(\bms^t_{i\to 1},\theta_i)= 
\E\psi\big(\bmu_t\Theta+\bG_t,\Theta\big)\, .\label{eq:T-SEVec2-Convergence-Almost}
\end{align}
\end{lemma}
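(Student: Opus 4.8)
The plan is to exploit the fact that the iteration \eqref{eq:MP1T}--\eqref{eq:MP2T} refreshes the noise, using an independent copy $\bZ(t)$ of $\bZ$ at each step $t$. As a consequence there is no Onsager reaction term, and conditionally on $\btheta$ and on the messages at time $t$ the update at time $t+1$ is an affine function of a fresh Gaussian matrix with no correlations among the messages that feed it; this is precisely the regime in which state evolution for message-passing recursions is classical. I would therefore deduce the two ``$\E[\,\cdot\,]$'' identities of \eqref{eq:T-SEVec1-Convergence-Almost}--\eqref{eq:T-SEVec2-Convergence-Almost} from the state-evolution theory for message passing developed in \cite{bayati2015universality}, after two cosmetic reductions: $(i)$ condition on $\btheta$, writing $Y_{ik}(t)/\sqrt n=\theta_i\theta_k/n+Z_{ik}(t)/\sqrt n$, so that the effective weight matrix is the Gaussian matrix $\bZ(t)/\sqrt n$ and the rank-one term $\theta_i\theta_k/n$ is absorbed into a $\btheta$-dependent perturbation of the nonlinearity, which is harmless since $\pi_{\Theta}$ has all moments finite; $(ii)$ observe that allowing the weight matrix to be refreshed at each iteration, if anything, only simplifies the single-matrix recursions treated there. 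Polynomiality of $F_t$ and $\psi$ together with the uniform moment bounds of Lemma \ref{lemma:BoundedMoments} supply the integrability needed to promote convergence in distribution to convergence of the relevant moments.

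The conceptual content is an induction on $t$. Condition on $\btheta$; the induction hypothesis, carried jointly over $s\le t$, is that the empirical distribution of the pairs $\big\{\big(\theta_i,(\bms^s_{i\to j})_{s\le t}\big)\big\}_{i\ne j}$ converges to the law of $\big(\Theta,(\bmu_s\Theta+\bG_s)_{s\le t}\big)$ with $(\bG_s)_{s\le t}$ jointly Gaussian with the covariance dictated by \eqref{eq:GeneralSE1}--\eqref{eq:GeneralSE2}, the Gaussian parts being asymptotically independent across $i$ and independent of $\bZ(t)$ (the base case being immediate from the initialization in \eqref{eq:MP1T}). For the inductive step one splits
\[
\bms^{t+1}_{i\to j}=\frac{\theta_i}{n}\sum_{k\notin\{i,j\}}\theta_k\,F_t(\bms^t_{k\to i})\;+\;\frac{1}{\sqrt n}\sum_{k\notin\{i,j\}}Z_{ik}(t)\,F_t(\bms^t_{k\to i})\,.
\]
The first sum concentrates, by a law of large numbers applied to the test function $\theta\mapsto\theta\,F_t(\,\cdot\,)$, on $\theta_i\,\E[\Theta F_t(\bmu_t\Theta+\bG_t)]=\theta_i\bmu_{t+1}$; the second sum is, conditionally on $(\btheta,\{\bms^s\}_{s\le t})$, a sum of $n-O(1)$ independent mean-zero vectors with conditional covariance $\tfrac1n\sum_k F_t(\bms^t_{k\to i})F_t(\bms^t_{k\to i})^{\sT}\to\bSigma_{t+1}$, so a central limit theorem delivers the Gaussian limit $\bG_{t+1}$; and since $Z_{ik}(t)$ and $Z_{i'k'}(t)$ coincide only when $\{i,k\}=\{i',k'\}$, the noise parts attached to two indices $i\ne i'$ share a single summand and hence remain asymptotically independent, which propagates the independence-across-$i$ claim. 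Replacing $\sum_{k\notin\{i,j\}}$ by $\sum_{k\ne i}$ to pass from $\bms^t_{i\to j}$ to $\bms^t_i$, cf.\ \eqref{eq:MP2T}, changes each message by $O(n^{-1/2})$ in every $L^p$ and is negligible; the same remark shows that the $1\to2$ and $i\to1$ directed messages, and the undirected ones, all have the claimed asymptotics.

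The step I expect to require the most care is upgrading the ``$\E[\,\cdot\,]$'' limits to the ``$\plim$ of the empirical average'' limits, i.e.\ proving that $\tfrac1n\sum_i\psi(\bms^t_i,\theta_i)$ concentrates about its mean. By Chebyshev this reduces to showing $\mathrm{Cov}\big(\psi(\bms^t_i,\theta_i),\psi(\bms^t_{i'},\theta_{i'})\big)=o(1)$ for all but a vanishing fraction of pairs $i\ne i'$, and this is again where the fresh-noise structure is essential: expressing each $\psi(\bms^t_i,\theta_i)$ as a combination of tree polynomials $\Poly^t_{\Tree}(\bY_*)$, the pairs whose relevant subtrees overlap form a $o(1)$ fraction and are controlled via Lemma \ref{lemma:BoundedMoments}, while for the bulk of pairs the two quantities are built from disjoint noise variables and the covariance factorizes to $o(1)$. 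Making this pairwise estimate uniform in $n$ is the analogue of the counting arguments behind Lemma \ref{lemma:time} and Corollary \ref{coro:YYstar}, and is the main obstacle. Once it is in place, Lemma \ref{lemma:MS_SE} follows, and combined with Corollary \ref{coro:MM_MS}, which transfers these limits from the fresh-noise iterates $\bms^t$ to the original iterates $\bmm^t$, it yields Lemma \ref{lemma:SEVec}.
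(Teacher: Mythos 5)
Your proposal is correct and follows essentially the same route as the paper's proof: condition on $\btheta$, split the update into a drift term (handled by the inductive plim/empirical-average hypothesis) plus a fresh-Gaussian-noise term (handled by a conditional CLT), pass from bounded test functions to polynomial $\psi$ using Lemma~\ref{lemma:BoundedMoments}, and establish the $\plim$ claim by showing pairwise covariances vanish through a tree-polynomial expansion followed by Chebyshev. The paper states the conditional CLT step as weak convergence of ${\sf Law}(\bms^{t+1}_{i\to j}\,|\,\cF_t)$ rather than invoking asymptotic independence across $i$, and pushes the covariance bound onto the counting machinery already built for Lemma~\ref{lemma:time}, but these are presentation differences, not different arguments.
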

\begin{proof}
The proof is essentially the same as the proof of Proposition 4 in \cite{bayati2015universality}.
We will focus on the claim \eqref{eq:T-SEVec2-Convergence-Almost}, since 
Eq.~\eqref{eq:T-SEVec2-Convergence-Almost} is completely analogous.

We proceed by induction over $t$, and will denote by $\cF_t$ the 
$\sigma$-algebra generated by $\theta$ and $Z(1),\dots Z(t)$. It is convenient to define 
$W(s)=Z(s)/\sqrt{n}$ and do rewrite Eq.~\eqref{eq:MP1T}
as
\begin{align}
\bms_{i\to j}^{t+1}= 
\Big(\frac{1}{n}\sum{k\in [n]\setminus \{i,j\} } \theta_k F_t(\bms^t_{k\to i})\Big)
\theta_i + \sum_{k\in [n]\setminus \{i,j\}}W_{ik}(t)F_t(\bms^t_{k\to i})\, .
\end{align}
Fixing $i,j$, by the induction hypothesis 
\begin{align}
&\plim_{n\to\infty}\frac{1}{n}\sum_{k\in [n]\setminus \{i,j\} } \theta_k F_t(\bms^t_{k\to i}) 
=\E\big\{\Theta F_t(\bmu_t\Theta+\bG_t)\big\}= \bmu_{t+1}\, ,\\
&\plim_{n\to\infty}\frac{1}{n}\sum_{k\in [n]\setminus \{i,j\} } 
F_t(\bms^t_{k\to i}) F_t(\bms^t_{k\to i})^{\sT}
=\E\big\{F_t(\bmu_t\Theta+\bG_t)F_t(\bmu_t\Theta+\bG_t)^{\sT}\big\} = \bSigma_{t+1}\, ,\\
&\plim_{n\to\infty}\frac{1}{n}\sum_{k\in [n]\setminus \{i,j\} } 
\|F_t(\bms^t_{k\to i})\|^4 = C_t<\infty\, .
\end{align}
Construct $(\theta_i)_{i\ge 1}$ for different $n$ in the same probability space.
By Lyapunov's central limit theorem, we have that, in probability 
\begin{align}
{\sf Law}(\bms_{i\to j}^{t+1}|\cF_t) \Rightarrow \normal(\bmu_{t+1}\theta_i,\bSigma_{t+1})\, .
\end{align}
(Here $\Rightarrow$ denotes weak convergence of probability measures.)
Hence, for any bounded Lipschitz function $\overline\psi$, we have
\begin{align}
\lim_{n\to\infty}
\E[\overline{\psi}(\bms^{t+1}_{i\to j},\theta_i)|\cF_t]= 
\E\overline\psi\big(\bmu_{t+1}\Theta+\bG_{t+1},\Theta\big)\, .
\end{align}
Since the right hand side is non-random, and using dominated convergence,
we have
\begin{align}
\lim_{n\to\infty}
\E[\overline{\psi}(\bms^{t+1}_{i\to j},\theta_i)]= 
\E\overline\psi\big(\bmu_{t+1}\Theta+\bG_{t+1},\Theta\big)\, .
\end{align}
Applying Lemma \ref{lemma:BoundedMoments}, the claim also holds
for $\psi$ that is only polynomially bounded, thus proving the
the first equation in Eq.~\eqref{eq:T-SEVec2-Convergence-Almost}, for iteration $t+1$.

Next consider the second limit in  Eq.~\eqref{eq:T-SEVec2-Convergence-Almost},
and begin by considering the case in which $\psi$ is a fixed polynomial.
We claim that, for any fixed $i_1\neq i_2\in\{2,\dots,n\}$,
\begin{align}
\lim_{n\to\infty}
\big|\E[\psi(\theta_{i_1},\bms_{i_1\to 1}^{t+1})\psi(\theta_{i_2},\bms_{i_2\to 1}^{t+1})]-
\E[\psi(\theta_{i_1},\bms_{i_1\to 1}^{t+1})]\E[\psi(\theta_{i_2},\bms_{i_2\to 1}^{t+1})]\big|=0\, .
\label{eq:CovIsSmall}
\end{align}
Indeed by linearity it is sufficient to prove that this is the case for 
$\psi(\theta,\bx) =\psi_{\bm}(\theta)\bx^{\bm}$. This case in turn can be analyze by 
expanding $(\bms_{i\to 1}^t)^{\bm}$ in a sum over trees as in the proof of Lemma~\ref{lemma:time}.
(See Proposition 4 in \cite{bayati2015universality}.) 

By expanding the sum in the variance and using Eq.~\eqref{eq:CovIsSmall}
we thus get
\begin{align*}
\lim_{n\to\infty}{\rm Var}\left(\frac{1}{n}\sum_{i=2}^n\psi(\bms^{t+1}_{i\to 1},\theta_i)\right)=0\, .
\end{align*}
Further we already proved earlier that
\begin{align*}
\lim_{n\to\infty}\E\left(\frac{1}{n}\sum_{i=2}^n\psi(\bms^{t+1}_{i\to 1},\theta_i)\right)=
\E\overline\psi\big(\bmu_{t+1}\Theta+\bG_{t+1},\Theta\big)\, .
\end{align*}
Hence, by Chebyshev's inequality, the following holds for any polynomial $\psi$:
\begin{align*}
\plim_{n\to\infty}\frac{1}{n}\sum_{i=2}^n\psi(\bms^{t+1}_{i\to 1},\theta_i)= 
\E\psi\big(\bmu_{t+1}\Theta+\bG_{t+1},\Theta\big)\, .
\end{align*}
This completes the proof of Lemma~\ref{lemma:MS_SE}.
\end{proof}

Now Lemma \ref{lemma:SEVec} is an immediate consequence of 
Corollary~\ref{coro:MM_MS} and Lemma~\ref{lemma:MS_SE}.

\bibliographystyle{alpha}

\newcommand{\etalchar}[1]{$^{#1}$}

\end{document}